\documentclass[pdflatex,sn-mathphys-num]{sn-jnl}
\usepackage{mathrsfs}
\usepackage{multirow}%
\usepackage{amsmath,amssymb,amsfonts}%
\usepackage{amsthm}%
\usepackage{mathrsfs}%
\usepackage[title]{appendix}%
\usepackage{xcolor}%
\usepackage{textcomp}%
\usepackage{manyfoot}%
\usepackage{booktabs}%
\usepackage{array}
\usepackage{algorithm}%
\usepackage{algorithmicx}%
\usepackage{algpseudocode}%
\usepackage{listings}%
\theoremstyle{thmstyleone}%
\theoremstyle{thmstyletwo}%
\theoremstyle{thmstylethree}%
\DeclareFontFamily{U}{mathc}{}
\DeclareFontShape{U}{mathc}{m}{it}%
{<->s*[1.03] mathc10}{}
\DeclareMathAlphabet{\mathscr}{U}{mathc}{m}{it}

\theoremstyle{plain}
\newtheorem{Th}{Theorem}
\newtheorem{Lem}[Th]{Lemma}
\newtheorem{defn}[Th]{Definition}

\usepackage{fouriernc} 

\usepackage{float}
\begin{document}

\title[$n$-dimensional totally graded filiform Lie algebras]{Local and 2-Local automorphisms of $n$-dimensional totally graded filiform Lie algebras}

\author*[1,2]{\fnm{Farkhodzhon} \sur{Arzikulov}}\email{arzikulovfn@gmail.com}

\author[3]{\fnm{Mirzobek} \sur{Shodiev}}\email{mirzobekshodiyev7@gmail.com}
\equalcont{These authors contributed equally to this work.}

\affil*[1]{\orgdiv{V.I. Romanovskiy Institute of Mathematics}, \orgname{Uzbekistan Academy of Sciences}, \orgaddress{\street{Olmazor district, University street 9}, \city{Tashkent}, \postcode{100174}, \country{Uzbekistan}}}

\affil[2]{\orgdiv{Mathematics}, \orgname{Andijan State University}, \orgaddress{\street{University 129}, \city{Andijan}, \postcode{170100}, \country{Uzbekistan}}}

\affil[3]{\orgdiv{Mathematics}, \orgname{Bukhara State University}, \orgaddress{\street{Mirzo Iqbol street 11}, \city{Bukhara}, \postcode{160107}, \country{Uzbekistan}}}

\abstract{\textbf{Purpose:} This paper aims to provide a complete description of the spaces of local and 2-local automorphisms for the families of finite-dimensional totally graded complex filiform Lie algebras of maximum length, building upon established classification frameworks and algebraic-filtration methods.

\textbf{Methods:} We systematically investigate six infinite sequences ($\mathfrak{m}_0(n)$, $\mathfrak{m}_2(n)$, $W^+(n)$, $\mathfrak{m}_{0,1}(n)$, $\mathfrak{m}_{0,2}(n)$, $\mathfrak{m}_{0,3}(n)$) and five one-parameter families ($\mathfrak{g}_{k,\alpha}$ for $k=7,\dots,11$). The analysis utilizes internal commutation boundaries and constructs non-linear, non-additive transformations on specialized parametric coordinate subspaces.

\textbf{Results:} We prove that for the structures $\mathfrak{m}_0(n)$ and $\mathfrak{m}_{0,1}(n)$, the space of local automorphisms strictly encapsulates the group of automorphisms, confirming the existence of pure local automorphisms. Conversely, for $\mathfrak{m}_2(n)$, $W^+(n)$, $\mathfrak{m}_{0,2}(n)$, $\mathfrak{m}_{0,3}(n)$, and $\mathfrak{g}_{k,\alpha}$, the local automorphisms are restricted to an invertible lower triangular matrix form due to rigid power constraints. Furthermore, the sequences $\mathfrak{m}_{0,1}(n)$, $\mathfrak{m}_{0,2}(n)$, and $\mathfrak{m}_{0,3}(n)$ are shown to possess pure non-linear 2-local automorphisms.

\textbf{Conclusion:} The remaining investigated structures adhere strictly to linearity, forcing every 2-local automorphism to coincide with a genuine automorphism. This establishes a clear boundary between structures allowing non-linear transformations and those maintaining strict linearity within filiform Lie algebras of maximum length.

}

\keywords{totally graded Lie algebra, filiform Lie algebra, matrix  form,  automorphism, local automorphism, 2-local automorphism}


\pacs[MSC Classification]{17B40, 17B70}

\maketitle

\section{Introduction}\label{sec1}

The exploration of localized algebraic operators originated with the seminal work of Kadison \cite{14} and independently Larson and Sourour \cite{19}, who introduced the concepts of local derivations and local automorphisms on Banach algebras and operator spaces. In the spirit of the classical Gleason--Kahan--\.{Z}elazko theorem \cite{11,13}, these investigations seek to determine the extent to which the local, point-wise evaluations of a linear map reflect its  structural behavior. A parallel advancement emerged from the Kowalski--S\l{}odkowski theorem \cite{18}, which catalyzed the study of 2-local maps. A map $\nabla$ (not \emph{a priori} assumed to be linear or additive) acting on an algebra $\mathcal{A}$ is defined as a 2-local automorphism if, for every pair of elements $x, y \in \mathcal{A}$, there exists a  algebraic automorphism $\phi_{x,y} \in \text{Aut}(\mathcal{A})$ such that $\nabla(x) = \phi_{x,y}(x)$ and $\nabla(y) = \phi_{x,y}(y)$. Following \v{S}emrl's landmark proof that every 2-local automorphism on the algebra of bounded linear operators $B(H)$ on a separable Hilbert space $H$ is  an automorphism \cite{24}, a substantial body of research has been dedicated to verifying this property across various non-associative topological and algebraic structures \cite{5,6,7,8,9,17}.

In recent years, the classification and structural analysis of complex nilpotent Lie algebras admitting a connected integer gradation have drawn significant attention under the nomenclature of algebras of maximum length \cite{26,27}. As revisited and refined by Bernik \cite{1}, the classification of totally graded filiform Lie algebras settles into two highly structured groups: six infinite structural sequences denoted by $\mathfrak{m}_0(n)$, $\mathfrak{m}_2(n)$, $W^+(n)$, $\mathfrak{m}_{0,1}(n)$, $\mathfrak{m}_{0,2}(n)$, and $\mathfrak{m}_{0,3}(n)$, alongside five exceptional one-parameter families designated as $\mathfrak{g}_{k,\alpha}$ for dimensions $k=7,8,9,10,11$. While the  automorphism groups of these algebras are well-documented via recursive polynomial coefficients and cohomological techniques \cite{10,21}, their local and 2-local properties have remained an open problem.

Recently, a highly systematic framework was developed by Arzikulov, Karimjanov, and Umrzaqov \cite{3} to describe the local and 2-local automorphisms of null-filiform and filiform Zinbiel algebras. This method relies on translating the point-wise mapping evaluation $\nabla(x) = \phi_x(x)$ into a parametric matrix-vector system $M_{\nabla}\bar{x} = A_x\bar{x}$, which is subsequently solved using a systematic case-by-case filtration protocol over successive non-vanishing coordinate subspaces (e.g., $x_1 \neq 0$, or $x_1=0, x_2 \neq 0$). In this paper, we successfully employ this methodology to establish the strict matrix boundaries of local automorphisms for all families of maximum length totally graded Lie algebras. For $\mathfrak{m}_0(n)$ and $\mathfrak{m}_{0,1}(n)$, we demonstrate that the absence of non-trivial central constraints allows the first and second columns of the underlying parametric equations to decouple, yielding a massive space of pure local automorphisms characterized by invertible lower triangular matrix forms. Conversely, for the branches $\mathfrak{m}_2(n)$, $W^+(n)$, $\mathfrak{m}_{0,2}(n)$, $\mathfrak{m}_{0,3}(n)$, and the exceptional families \cite{2} $\mathfrak{g}_{k,\alpha}$, we establish that the internal commutation relations lock the main diagonal entries into rigid power constraints, restricting the local automorphisms to strict limits.

Furthermore, we extend our inquiry to the 2-local realm. To construct pure non-linear 2-local automorphisms that deviate from  linearity, we utilize the pioneering dual-method operational framework established by Arzikulov, Nabijonova, and Urinboyev \cite{4}. By exploiting the independent parameters residing in the boundary rows of the  automorphism matrices, we construct non-linear, non-additive 2-local transformations leveraging a homogeneous function of degree one, $f(z_1,z_2) = z_1^3/(z_1^2+z_2^2)$. We prove that while the sequences $\mathfrak{m}_{0,1}(n)$, $\mathfrak{m}_{0,2}(n)$, and $\mathfrak{m}_{0,3}(n)$ accommodate these pure non-linear 2-local deformations, the structural matrix configuration of $\mathfrak{m}_2(n)$, $W^+(n)$, and $\mathfrak{g}_{k,\alpha}$ collapses these spaces, proving that every 2-local automorphism on them is intrinsically linear and .

\section{Preliminaries}\label{sec2}

In this section we give necessary definitions and preliminary results for Lie algebras.

\begin{defn}
A vector space $L$ over a field $\mathbb{F}$ equipped with a bilinear bracket $[\cdot,\cdot]: L\times L\to L$ is called a \textbf{Lie algebra} if for all $x,y,z\in L$ the following identities hold:
\begin{itemize}
\item \textbf{Skew-symmetry:} $[x,x]=0$ (equivalently $[x,y]=-[y,x]$),
\item \textbf{Jacobi identity:} $[x,[y,z]]+[y,[z,x]]+[z,[x,y]]=0$.
\end{itemize}
\end{defn}

For a Lie algebra $L$, the Jacobi identity can be written as
\[
[x,[y,z]] = [[x,y],z] + [y,[x,z]].
\]

The \textbf{lower central series} of a Lie algebra $L$ is defined recursively by
\[
L^1 = L,\qquad L^{k+1}=[L^k,L],\quad k\ge 1.
\]

\begin{defn}
A Lie algebra $L$ is called \textbf{nilpotent} if there exists $k\in\mathbb{N}$ such that $L^k=\{0\}$. The smallest such $k$ is called the \textbf{index of nilpotency} of $L$.
\end{defn}
For an $n$-dimensional nilpotent Lie algebra $L$, the nilpotency index (the smallest $k$ such that $L^k = \{0\}$) satisfies $k \le n$. The maximal index $n$ is attained precisely by the \emph{filiform} Lie algebras. Indeed, for the standard filiform algebra with basis $\{e_1,\dots,e_n\}$ and non-zero brackets $[e_1,e_i]=e_{i+1}$ ($2\le i\le n-1$), the lower central series is
\[
L^2 = \langle e_3,\dots,e_n\rangle,\; L^3 = \langle e_4,\dots,e_n\rangle,\; \dots,\; L^{n-1} = \langle e_n\rangle,\; L^n = 0,
\]
so the nilpotency index equals $n$. For the $n$-dimensional abelian Lie algebra we have $L^2=0$, i.e., index $2$. Consequently, the upper bound is $n$, not $n+1$. A nilpotent Lie algebra is called \emph{quasi-filiform} if its nilpotency index is $n-1$ (i.e., $L^{n-1}=0$ but $L^{n-2}\neq 0$).

We need to be consistent with the definition of quasi-filiform used in the classification theorem below.

\begin{defn}
An $n$-dimensional nilpotent Lie algebra is called \textbf{filiform} if its index of nilpotency is $n$ (i.e., $L^n=0$ but $L^{n-1}\neq0$). It is called \textbf{quasi-filiform} if its index of nilpotency is $n-1$ (i.e., $L^{n-1}=0$ but $L^{n-2}\neq0$).
\end{defn}

A Lie algebra $L$ is said to be $\mathbb{Z}$-graded if $L=\bigoplus_{i\in\mathbb{Z}} V_i$ with $[V_i,V_j]\subseteq V_{i+j}$ for all $i,j$, and only finitely many $V_i$ are non-zero. A gradation is called \textbf{connected} if $L=V_{k_1}\oplus\cdots\oplus V_{k_t}$ with $V_{k_i}\neq\{0\}$ for every $i$ and $k_1<k_2<\cdots<k_t$.

\begin{defn}
Let $\mathfrak{g}$ be an $n$-dimensional nilpotent Lie algebra over a field $\mathbb{F}$.
The algebra $\mathfrak{g}$ is called a \textbf{totally graded filiform Lie algebra}
(or a filiform Lie algebra of maximum length) if it satisfies the following two conditions concurrently:
\begin{enumerate}
    \item \textbf{Filiform Property:} Its nilpotency class is maximal, meaning that for the lower central series defined by $C^1\mathfrak{g} = \mathfrak{g}$ and $C^{i+1}\mathfrak{g} = [\mathfrak{g}, C^i\mathfrak{g}]$ for $i \ge 1$, the dimensions satisfy:
    \[ \dim C^i\mathfrak{g} = n - i \quad \text{for } 2 \le i \le n \]
    Consequently, $C^{n-1}\mathfrak{g} \neq \{0\}$ and $C^n\mathfrak{g} = \{0\}$.

    \item \textbf{Total Grading Property:} It admits an $\mathbb{N}$-grading as a direct sum of 1-dimensional homogeneous vector subspaces:
    \[ \mathfrak{g} = \mathfrak{g}_1 \oplus \mathfrak{g}_2 \oplus \dots \oplus \mathfrak{g}_n \]
    such that $[\mathfrak{g}_i, \mathfrak{g}_j] \subseteq \mathfrak{g}_{i+j}$ for all $i, j \in \{1, \dots, n\}$, where $\dim \mathfrak{g}_i = 1$ for each $1 \le i \le n$, and $\mathfrak{g}_i = \{0\}$ for all $i > n$.
\end{enumerate}
Equivalently, $\mathfrak{g}$ is totally graded if and only if there exists a semisimple derivation $d \in \text{Der}(\mathfrak{g})$ with the spectrum of eigenvalues exactly equal to $\{1, 2, \dots, n\}$.
\end{defn}

\begin{Th}[\cite{1}]
Let \( g = \bigoplus_{i=1}^n g_i \) be a totally graded filiform Lie algebra. Then \( g \) is isomorphic to a Lie algebra from the following list:
\end{Th}

\begin{enumerate}
    \item Filiform Lie algebras belonging to the six sequences \( m_0(n) \), \( m_2(n) \), \( W^+(n) \), \( m_{0,1}(2k+1) \), \( m_{0,2}(2k+2) \), \( m_{0,3}(2k+3) \), defined by the homogeneous basis \( \{e_1, \ldots, e_n\} \) and commutating relations given in Table 1;
    \item Filiform Lie algebras of the 5 one-parameter families \( g_{n,\alpha} \) of dimensions \( n = 7, \ldots, 11 \) respectively, defined by their homogeneous bases and commutation relations given in Table 2.
\end{enumerate}

\begin{table}[h]
\centering
\caption{Six infinite sequences of totally graded filiform Lie algebras}
\label{tab:filiformn}
\begin{tabular}{|c|c|>{\raggedright\arraybackslash}p{10cm}|}
\hline
\textbf{Algebra} & \textbf{Dimension} & \textbf{Commutation relations} \\
\hline
\( m_0(n) \) & \( n \ge 3 \) & \([e_1, e_i] = e_{i+1},\ i = 2, \ldots, n-1\) \\
\hline
\( m_2(n) \) & \( n \ge 5 \) &
\([e_1, e_i] = e_{i+1},\ i = 2, \ldots, n-1\) \\
& & \([e_2, e_i] = e_{i+2},\ i = 3, \ldots, n-2\) \\
\hline
\( W^+(n) \) & \( n \ge 12 \) &
\([e_i, e_j] =
\begin{cases}
(j-i)e_{i+j}, & i+j \le n; \\
0, & i+j > n;
\end{cases} \) \\
\hline
\( m_{0,1}(2k+1) \) & \( n = 2k+1,\ k \ge 3 \) &
\([e_1, e_i] = e_{i+1},\ i = 2, \ldots, 2k\) \\
& & \([e_l, e_{2k-l+1}] = (-1)^{l+1}e_{2k+1},\ l = 2, \ldots, k\) \\
\hline
\( m_{0,2}(2k+2) \) & \( n = 2k+2,\ k \ge 3 \) &
\([e_1, e_i] = e_{i+1},\ i = 2, \ldots, 2k+1\) \\
& & \([e_l, e_{2k-l+1}] = (-1)^{l+1}e_{2k+1},\ l = 2, \ldots, k\) \\
& & \([e_j, e_{2k-j+2}] = (-1)^{j+1}(k-j+1)e_{2k+2},\ j = 2, \ldots, k\) \\
\hline
\( m_{0,3}(2k+3) \) & \( n  = 2k+3,\ k \ge 3 \) &
\([e_1, e_i] = e_{i+1},\ i = 2, \ldots, 2k+2\) \\
& & \([e_l, e_{2k-l+1}] = (-1)^{l+1}e_{2k+1},\ l = 2, \ldots, k\) \\
& & \([e_j, e_{2k-j+2}] = (-1)^{j+1}(k-j+1)e_{2k+2},\ j = 2, \ldots, k\) \\
& & \([e_m, e_{2k-m+3}] = (-1)^m\left((m-2)k - \frac{(m-2)(m-1)}{2}\right)e_{2k+3},\ m = 3, \ldots, k+1\) \\
\hline
\end{tabular}
\end{table}

\begin{table}[htbp]
\centering
\caption{Five one-parameter families of totally graded filiform Lie algebras.}
\label{tab:filiform}
\begin{tabular}{|c|>{\raggedright\arraybackslash}p{0.7\linewidth}|}
\hline
\textbf{Algebra} & \textbf{Commutation relations} \\
\hline
$\mathfrak{g}_{7,\alpha}$ &
$\begin{array}{@{}l@{}}
[e_1, e_j] = e_{j+1},\quad 2 \leq j \leq 6
[e_2, e_3] = (2 + \alpha)e_5, \\[2pt] [e_2, e_4] = (2 + \alpha)e_6, \quad
[e_2, e_5] = (1 + \alpha)e_7,\quad [e_3, e_4] = e_7
\end{array}$ \\
\hline
$\mathfrak{g}_{8,\alpha}$ &
$\begin{array}{@{}l@{}}
\text{relations of } \mathfrak{g}_{7,\alpha} \text{ and} \quad
[e_1, e_7] = e_8,\quad [e_2, e_6] = \alpha e_8,\quad [e_3, e_5] = e_8,
\end{array}$ \\
\hline
$\mathfrak{g}_{9,\alpha}$ &
$\begin{array}{@{}l@{}}
\text{relations of } \mathfrak{g}_{8,\alpha} \text{ and} \quad
(\alpha \neq -\tfrac{5}{2}) \quad
[e_1, e_8] = e_9,\\[4pt] [e_2, e_7] = \dfrac{2\alpha^2 + 3\alpha - 2}{2\alpha + 5} e_9, \quad
[e_3, e_6] = \dfrac{2\alpha + 2}{2\alpha + 5} e_9,\quad [e_4, e_5] = \dfrac{3}{2\alpha + 5} e_9,
\end{array}$ \\
\hline
$\mathfrak{g}_{10,\alpha}$ &
$\begin{array}{@{}l@{}}
\text{relations of } \mathfrak{g}_{9,\alpha} \text{ and} \quad
(\alpha \neq -\tfrac{5}{2})
[e_1, e_9] = e_{10}, \\[2pt] [e_2, e_8] = \dfrac{2\alpha^2 + \alpha - 1}{2\alpha + 5} e_{10}, \quad
[e_3, e_7] = \dfrac{2\alpha - 1}{2\alpha + 5} e_{10},\quad [e_4, e_6] = \dfrac{3}{2\alpha + 5} e_{10},
\end{array}$ \\
\hline
$\mathfrak{g}_{11,\alpha}$ &
$\begin{array}{@{}l@{}}
\text{relations of } \mathfrak{g}_{10,\alpha} \text{ and} \\
(\alpha \neq -\tfrac{5}{2},\,-1,\,-3) \\[2pt]
[e_1, e_{10}] = e_{11},\quad [e_2, e_9] = \dfrac{2\alpha^3 + 2\alpha^2 + 3}{2(\alpha^2 + 4\alpha + 3)} e_{11}, \\[6pt]
[e_3, e_8] = \dfrac{4\alpha^3 + 8\alpha^2 - 8\alpha - 21}{2(\alpha^2 + 4\alpha + 3)(2\alpha + 5)} e_{11}, \\[6pt]
[e_4, e_7] = \dfrac{3(2\alpha^2 + 4\alpha + 5)}{2(\alpha^2 + 4\alpha + 3)(2\alpha + 5)} e_{11}, \\[6pt]
[e_5, e_6] = \dfrac{3(4\alpha + 1)}{2(\alpha^2 + 4\alpha + 3)(2\alpha + 5)} e_{11}
\end{array}$ \\
\hline
\end{tabular}
\end{table}

\newpage

The algebras under consideration are the six infinite sequences listed in ~\eqref{tab:filiformn} and the five one-parameter families given in Table~\eqref{tab:filiform} of \cite{1}.  All of them are $n$-dimensional complex filiform Lie algebras (nilpotency index $n-1$) equipped with a homogeneous basis $\{e_1,\dots,e_n\}$ such that the following fundamental relation holds:
\[
[e_1,e_i] = \lambda_i e_{i+1}, \qquad 2\le i\le n-1,
\]
where each $\lambda_i$ is a non-zero scalar.  Explicitly,
\[
\lambda_i = \begin{cases}
1, & \text{for } \mathfrak{m}_0(n),\ \mathfrak{m}_2(n),\ \mathfrak{m}_{0,1}(2k+1),\ \mathfrak{m}_{0,2}(2k+2),\ \mathfrak{m}_{0,3}(2k+3),\\[2mm]
i-1, & \text{for } W^+(n),\\[2mm]
1, & \text{for the five families } \mathfrak{g}_{7,\alpha},\dots,\mathfrak{g}_{11,\alpha}.
\end{cases}
\]
All other non-zero brackets either lie in the subspace $\operatorname{span}\{e_n\}$ or are linear combinations of basis vectors with indices $\ge 3$; they never produce vectors with index $1$ or $2$.  Consequently, the following two lemmas hold for every algebra appearing in Table~\eqref{tab:filiformn} and~\eqref{tab:filiform}

\begin{Lem}\label{lem:LCS-unified}
For any totally graded filiform Lie algebra belonging to the lists above, the lower central series satisfies
\begin{enumerate}
    \item $\mathfrak{g}^{2}=[\mathfrak{g},\mathfrak{g}] = \operatorname{span}\{e_{3},e_{4},\dots,e_{n}\}$.
    \item For every integer $r$ with $2\le r\le n-1$,
          \[
          \mathfrak{g}^{r} = \operatorname{span}\{e_{r+1},e_{r+2},\dots,e_{n}\}.
          \]
    \item $\mathfrak{g}^{\,n-1} = \operatorname{span}\{e_{n}\}$ and $\mathfrak{g}^{\,n}=0$; in particular $\mathfrak{g}$ is filiform.
    \item The centre is $Z(\mathfrak{g}) = \operatorname{span}\{e_{n}\}$.
\end{enumerate}
\end{Lem}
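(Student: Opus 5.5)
The plan is to derive everything from two facts: the total grading, and the non-vanishing of the structure constants $\lambda_i$ in $[e_1,e_i]=\lambda_i e_{i+1}$. First I will check, table by table, that the bracket of $e_i$ and $e_j$ is always a multiple of $e_{i+j}$, and is zero when $i+j>n$. For the six sequences this is immediate:
\begin{itemize}
\item in $\mathfrak{m}_{0,1}$ one has $l+(2k-l+1)=2k+1$;
\item in $\mathfrak{m}_{0,2}$ one has $j+(2k-j+2)=2k+2$;
\item in $\mathfrak{m}_{0,3}$ one has $m+(2k-m+3)=2k+3$;
\item for $W^+(n)$ it is explicit.
\end{itemize}
For the families $\mathfrak{g}_{k,\alpha}$ one checks the listed relations in the same way, e.g.\ $[e_2,e_3]\in\operatorname{span}\{e_5\}$ and $[e_4,e_5]\in\operatorname{span}\{e_9\}$. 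Hence $e_i$ is homogeneous of degree $i$. I regard this verification as the only real (if tedious) obstacle, since the rest is formal.

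Next I prove item (2), for all $1\le r\le n-1$ with the convention $\mathfrak{g}^1=\mathfrak{g}$, by induction on $r$. Items (1) and (3) are then the special cases $r=2$ and $r=n-1$, and $\mathfrak{g}^n=[\mathfrak{g}^{n-1},\mathfrak{g}]=\operatorname{span}\{[e_n,e_i]\}=0$ because every such bracket has degree $n+i>n$.

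For the inclusion $\mathfrak{g}^r\subseteq\operatorname{span}\{e_{r+1},\dots,e_n\}$, assume $\mathfrak{g}^{r-1}$ is spanned by vectors of degree $\ge r$. Then $\mathfrak{g}^r=[\mathfrak{g}^{r-1},\mathfrak{g}]$ is spanned by brackets $[e_i,e_j]$ with $i\ge r$ and $j\ge 1$. Each such bracket lies in degree $i+j\ge r+1$.

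For the reverse inclusion, take $j\ge r+1$. Since $j-1\ge r$, the induction hypothesis gives $e_{j-1}\in\mathfrak{g}^{r-1}$. Then
\[
e_j=\lambda_{j-1}^{-1}[e_1,e_{j-1}]\in[\mathfrak{g},\mathfrak{g}^{r-1}]=\mathfrak{g}^r,
\]
using $\lambda_{j-1}\neq0$; for $W^+(n)$ this is $\lambda_{j-1}=j-2\ne0$ since $j\ge3$. Together with $\dim\mathfrak{g}^r=n-r$, this also confirms that $\mathfrak{g}$ is filiform.

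For the centre, $e_n$ is central because all brackets with $e_n$ land in degree $>n$. Conversely, let $x=\sum x_ie_i$ be central. Then
\[
0=[e_1,x]=\sum_{i=2}^{n-1}\lambda_i x_i e_{i+1},
\]
which forces $x_2=\dots=x_{n-1}=0$. What remains is $x=x_1e_1+x_ne_n$, and then $0=[x,e_2]=x_1\lambda_2e_3$ gives $x_1=0$. Hence $Z(\mathfrak{g})=\operatorname{span}\{e_n\}$.
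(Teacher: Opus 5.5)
Your route is the same as the paper's: a degree count for the forward inclusion, $e_j=\lambda_{j-1}^{-1}[e_1,e_{j-1}]$ for the reverse inclusion, and the identical two-step centre argument. However, the induction as you set it up has a false base case. With the convention $\mathfrak{g}^1=\mathfrak{g}$, item (2) at $r=1$ reads $\mathfrak{g}=\operatorname{span}\{e_2,\dots,e_n\}$, which is wrong because of $e_1$. So in the forward inclusion at $r=2$ you rely on the hypothesis that $\mathfrak{g}^1$ is spanned by vectors of degree $\ge 2$, and that hypothesis fails. A pure degree count starting from $\mathfrak{g}^1$ (degrees $\ge 1$) only gives $\mathfrak{g}^2\subseteq\bigoplus_{d\ge 2}\mathfrak{g}_d$. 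That is one degree short, and an induction anchored there gives only $\mathfrak{g}^r$ in degrees $\ge r$, not $\ge r+1$.

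The missing ingredient is exactly the content of item (1). By skew-symmetry, $[\mathfrak{g},\mathfrak{g}]$ is spanned by the brackets $[e_i,e_j]$ with $i<j$, whose degree is $i+j\ge 1+2=3$. Equivalently, $[e_1,e_1]=0$ because the degree-one component is one-dimensional, and this is where the filiform shift comes from. The paper uses this in its Part 1 when it observes that all defining brackets land in $\operatorname{span}\{e_3,\dots,e_n\}$. To fix the proof, start the induction at $r=2$ with this direct argument. Your reverse inclusion at $r=2$ is already fine, since $e_{j-1}\in\mathfrak{g}$ trivially. From $r=3$ onward your inductive step is valid as written, and items (3), (4) and the dimension count $\dim\mathfrak{g}^r=n-r$ then go through unchanged.
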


\begin{proof}
\noindent\textbf{1.  Description of $\mathfrak{g}^{2}$.}
All defining brackets belong to $\operatorname{span}\{e_{3},\dots,e_{n}\}$; therefore $[\mathfrak{g},\mathfrak{g}]\subseteq\operatorname{span}\{e_{3},\dots,e_{n}\}$.  Conversely, for any $j$ with $3\le j\le n$,
\[
e_{j} = \frac{1}{\lambda_{j-1}}[e_{1},e_{j-1}]\in[\mathfrak{g},\mathfrak{g}],
\]
since $j-1\ge2$ and $\lambda_{j-1}\neq0$ so  $j \ge 3 $.  Hence $\mathfrak{g}^{2}=\operatorname{span}\{e_{3},\dots,e_{n}\}$.

\noindent\textbf{2.  Inductive description of the higher terms.}
We prove by induction on $r$ that $\mathfrak{g}^{r}=\operatorname{span}\{e_{r+1},\dots,e_{n}\}$ for $2\le r\le n-1$.  The case $r=2$ has been established.  Assume the statement holds for some $r$ with $2\le r\le n-2$.  Then
\[
\mathfrak{g}^{r+1}=[\mathfrak{g}^{r},\mathfrak{g}]\subseteq\operatorname{span}\{e_{r+2},\dots,e_{n}\},
\]
because the bracket of a vector of index at least $r+1$ with any basis vector can only produce vectors of index at least $r+2$ (the only possibly lower index would come from $[e_{r+1},e_{1}]$, which equals $-\lambda_{r+1}e_{r+2}$).  On the other hand, for every $s$ with $r+2\le s\le n$, we have
\[
e_{s} = \frac{1}{\lambda_{s-1}}[e_{1},e_{s-1}]\in[\mathfrak{g}^{r},\mathfrak{g}],
\]
because $e_{s-1}\in\mathfrak{g}^{r}$ (since $s-1\ge r+1$).  Thus $\mathfrak{g}^{r+1}=\operatorname{span}\{e_{r+2},\dots,e_{n}\}$, completing the induction.

\noindent\textbf{3.  Nilpotency class.}
Setting $r=n-1$ gives $\mathfrak{g}^{\,n-1}=\operatorname{span}\{e_{n}\}$.  No bracket can produce a basis vector with index $n+1$, so $[e_{n},\mathfrak{g}]=0$ and consequently $\mathfrak{g}^{\,n}=0$.  Hence the nilpotency index is $n-1$, i.e. $\mathfrak{g}$ is filiform.

\noindent\textbf{4.  Centre.}
From $\mathfrak{g}^{\,n-1}=\operatorname{span}\{e_{n}\}$ we already know $e_{n}\in Z(\mathfrak{g})$, so $\operatorname{span}\{e_{n}\}\subseteq Z(\mathfrak{g})$.  Now take any central element $x=\sum_{i=1}^{n}\alpha_{i}e_{i}$.  The condition $[x,e_{1}]=0$ yields
\[
0=[x,e_{1}]=-\sum_{i=2}^{n-1}\alpha_{i}\lambda_{i}e_{i+1}.
\]
Because $\lambda_{i}\neq0$ for $2\le i\le n-1$, we obtain $\alpha_{2}=\alpha_{3}=\dots=\alpha_{n-1}=0$.  Moreover,
\[
0=[x,e_{2}]=\alpha_{1}[e_{1},e_{2}],
\]
and $[e_{1},e_{2}]=\lambda_{2}e_{3}\neq0$; hence $\alpha_{1}=0$.  Thus $x=\alpha_{n}e_{n}$, proving $Z(\mathfrak{g})=\operatorname{span}\{e_{n}\}$.
\end{proof}

\begin{Lem}\label{lem:nablae_k}
Let $\mathfrak{g}$ be an arbitrary Lie algebra from the lists above, and let $\nabla:\mathfrak{g}\to\mathfrak{g}$ be an automorphism.  Then for every $k$ with $2\le k\le n$ we have
\[
\nabla(e_k)\in\operatorname{span}\{e_k,e_{k+1},\dots,e_n\}.
\]
Equivalently,
\[
\nabla(e_k)=\sum_{j=k}^{n}c_{kj}e_j,
\]
the matrix of $\nabla$ with respect to the basis $\{e_1,\dots,e_n\}$ is lower triangular.
\end{Lem}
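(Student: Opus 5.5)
The plan is to use the fact that an automorphism maps every characteristic ideal into itself. Both the terms of the lower central series and centralizers of characteristic ideals are characteristic, so Lemma~\ref{lem:LCS-unified} will give the result for all $k\ge 3$ directly. The case $k=2$ needs one more characteristic subspace.

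\textbf{Step 1 ($k\ge 3$).} Let $\nabla$ be an automorphism. By induction on $r$, $\nabla(\mathfrak{g}^{r})\subseteq\mathfrak{g}^{r}$: the case $r=1$ is trivial, and
\[
\nabla(\mathfrak{g}^{r+1})=\nabla([\mathfrak{g}^{r},\mathfrak{g}])=[\nabla(\mathfrak{g}^{r}),\nabla(\mathfrak{g})]\subseteq[\mathfrak{g}^{r},\mathfrak{g}]=\mathfrak{g}^{r+1}.
\]
For $3\le k\le n$, Lemma~\ref{lem:LCS-unified} gives $e_k\in\mathfrak{g}^{k-1}=\operatorname{span}\{e_k,\dots,e_n\}$. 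Hence $\nabla(e_k)\in\operatorname{span}\{e_k,\dots,e_n\}$.

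\textbf{Step 2 ($k=2$).} This is the only real obstacle, since $e_2$ does not lie in any term $\mathfrak{g}^{r}$ with $r\ge2$. I would consider the centralizer
\[
C=\{x\in\mathfrak{g} : [x,\mathfrak{g}^{n-2}]=0\}, \qquad \mathfrak{g}^{n-2}=\operatorname{span}\{e_{n-1},e_n\}.
\]
Since $\nabla$ preserves $\mathfrak{g}^{n-2}$ and the bracket, it also preserves $C$.

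To compute $C$, take $x=\sum_i x_ie_i$. Because $e_n$ is central by Lemma~\ref{lem:LCS-unified}, only $[x,e_{n-1}]$ matters. For $j\ge2$ the grading gives $[e_j,e_{n-1}]\in\mathfrak{g}_{j+n-1}=0$. Hence
\[
[x,e_{n-1}]=x_1\lambda_{n-1}e_n,
\]
with $\lambda_{n-1}\ne0$. So $C=\operatorname{span}\{e_2,\dots,e_n\}$. Since $e_2\in C$, we get $\nabla(e_2)\in C$, that is, $\nabla(e_2)$ has no $e_1$-component.

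This argument uses $n-1\ge 3$, so that $e_{n-1}\ne e_2$ and $C$ is a proper subspace. That holds for every algebra in the lists except $\mathfrak{m}_0(3)$. For $\mathfrak{m}_0(3)$ (the Heisenberg algebra) the swap $e_1\leftrightarrow e_2$, $e_3\mapsto -e_3$ is an automorphism, so the statement genuinely fails there. I would state the lemma for $n\ge4$ and note this exception.

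\textbf{Step 3 (matrix form).} Combining Steps 1 and 2 gives $\nabla(e_k)=\sum_{j\ge k}c_{kj}e_j$ for $2\le k\le n$. The vector $\nabla(e_1)$ is unrestricted, but its coefficients fill only the first column, so the matrix is lower triangular. Invertibility forces $c_{kk}\ne0$ for all $k$.
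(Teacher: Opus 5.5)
Your proof is correct and is essentially the paper's. Step 1 is exactly its lower-central-series argument. Your centralizer argument for $k=2$ is the coordinate-free form of the paper's next lemma, which gets $b_1=0$ by applying $\nabla$ to $[e_2,e_{n-1}]=0$ and using $\nabla(e_{n-1})=\tau_{n-1}e_{n-1}+\tau_ne_n$ with $\tau_{n-1}\neq 0$; the paper needs that extra lemma because its own proof of this statement only reaches $k=3,\dots,n$.

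Your $\mathfrak{m}_0(3)$ caveat is correct and the paper misses it; its $b_1=0$ argument also silently needs $n\ge 4$. One detail to add: $\nabla(C)\subseteq C$ needs the equality $\nabla(\mathfrak{g}^{n-2})=\mathfrak{g}^{n-2}$, not just the inclusion from Step 1, and this follows from that inclusion together with bijectivity.
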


\begin{proof}
Because $\nabla$ is an automorphism it satisfies
\[
\nabla([x,y])=[\nabla(x),\nabla(y)]\qquad\text{for all }x,y\in\mathfrak{g}.
\]

We first observe that the whole lower central series is invariant under $\nabla$.
For $r=2$,
\[
\nabla(\mathfrak{g}^{2})=\nabla([\mathfrak{g},\mathfrak{g}])=[\nabla(\mathfrak{g}),\nabla(\mathfrak{g})]=[\mathfrak{g},\mathfrak{g}]=\mathfrak{g}^{2}.
\quad by \quad biektivly
\]
Now suppose that $\nabla(\mathfrak{g}^{r-1})=\mathfrak{g}^{r-1}$ for some $r\ge3$.
Then
\[
\nabla(\mathfrak{g}^{r})=\nabla([\mathfrak{g}^{r-1},\mathfrak{g}])=[\nabla(\mathfrak{g}^{r-1}),\nabla(\mathfrak{g})]=[\mathfrak{g}^{r-1},\mathfrak{g}]=\mathfrak{g}^{r}.
\]
Thus by induction $\nabla(\mathfrak{g}^{r})=\mathfrak{g}^{r}$ for every $2\le r\le n-1$.

According to Lemma~\eqref{lem:LCS-unified} we have
\[
\mathfrak{g}^{k-1}=\operatorname{span}\{e_k,e_{k+1},\dots,e_n\},\qquad 2\le k\le n.
\]
Moreover, the basis vector $e_k$ can be expressed as a repeated bracket of
$e_1$ and $e_2$:
\[
e_k=\underbrace{[e_1,[e_1,\dots,[e_1,e_2]\dots]]}_{k-2\text{ times}}\in\mathfrak{g}^{k-1}.
\]

Applying $\nabla$ and using the automorphism property we obtain
\[
\nabla(e_k)=\nabla\!\Bigl(\underbrace{[e_1,[e_1,\dots,[e_1,e_2]\dots]]}_{k-2\text{ times}}\Bigr)
          =\underbrace{[\nabla(e_1),[\nabla(e_1),\dots,[\nabla(e_1),\nabla(e_2)]\dots]]}_{k-2\text{ times}}
          =\operatorname{ad}(\nabla(e_1))^{k-2}(\nabla(e_2)).
\]

Since $e_k\in\mathfrak{g}^{k-1}$ and $\nabla(\mathfrak{g}^{k-1})=\mathfrak{g}^{k-1}$, it follows that
\[
\nabla(e_k)\in\mathfrak{g}^{k-1}.
\]
Using again Lemma~\eqref{lem:LCS-unified},
\[
\mathfrak{g}^{k-1}=\operatorname{span}\{e_k,e_{k+1},\dots,e_n\},
\]
so $\nabla(e_k)$ can be written as a linear combination of $e_k,e_{k+1},\dots,e_n$
only.  Hence
\[
\nabla(e_k)=\sum_{j=k}^{n}c_{kj}e_j,
\quad k=3,..,n
\]

\end{proof}

In the general setting, the bracket relations compiled in Tables~1 and~2 take the uniform form
\[
[e_1, e_i] = \lambda_i e_{i+1}, \qquad 2\le i\le n-1,
\]
where each \(\lambda_i\) is a nonzero scalar. Their explicit values are specified by
\[
\lambda_i =
\begin{cases}
1, & \text{for } \mathfrak{m}_0(n),\ \mathfrak{m}_2(n),\ \mathfrak{m}_{0,1}(2k+1),\ \mathfrak{m}_{0,2}(2k+2),\ \mathfrak{m}_{0,3}(2k+3),\\[2mm]
i-1, & \text{for } W^+(n),\\[2mm]
1, & \text{for the five families } \mathfrak{g}_{7,\alpha},\dots,\mathfrak{g}_{11,\alpha}.
\end{cases}
\]
Owing to these relations, every automorphism \(\nabla\) is entirely determined by its values on the first two basis elements. Indeed, the image of an arbitrary basis vector \(e_k\) admits the recursive expression
\[
\nabla(e_k) = \operatorname{ad}(\nabla(e_1))^{\,k-2}\bigl(\nabla(e_2)\bigr),
\]
which follows directly by iterated application of the defining commutator identities. Consequently, the reconstruction of the entire automorphism reduces to the consistent assignment of the pair \((\nabla(e_1), \nabla(e_2))\).

By the definition of an automorphism, in full generality we have the expansions
\[
\nabla(e_1)=\sum_{i=1}^n a_i e_i,\qquad \nabla(e_2)=\sum_{i=1}^n b_i e_i.
\]
With this notation in hand, we give the following lemma.

\begin{Lem}\label{coeff b1}
   The coefficient \(b_1\) in the expansion of \(\nabla(e_2)\) is identically zero; that is, \(b_1 = 0\).
\end{Lem}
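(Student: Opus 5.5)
The plan is to show that $\operatorname{span}\{e_2,e_3,\dots,e_n\}$ is a \emph{characteristic} subspace of $\mathfrak{g}$, i.e.\ one preserved by every automorphism, and to realize it as a centralizer of a term of the lower central series. Then $e_2$ lies in it, so $\nabla(e_2)$ does too, and $b_1=0$. Note that the proof of Lemma~\ref{lem:nablae_k} only controls $\nabla(e_k)$ for $k\ge 3$, so a separate argument is needed for $e_2$.

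\textbf{Step 1 (the candidate subspace).} Take $r=n-2$ in Lemma~\ref{lem:LCS-unified}, which gives $\mathfrak{g}^{n-2}=\operatorname{span}\{e_{n-1},e_n\}$. I claim that
\[
C_{\mathfrak{g}}(\mathfrak{g}^{n-2})=\{x\in\mathfrak{g} : [x,\mathfrak{g}^{n-2}]=0\}=\operatorname{span}\{e_2,\dots,e_n\}.
\]
For this I would use the total grading $[\mathfrak{g}_i,\mathfrak{g}_j]\subseteq\mathfrak{g}_{i+j}$ with $\mathfrak{g}_i=\operatorname{span}\{e_i\}$ and $\mathfrak{g}_i=0$ for $i>n$. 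For $j\ge 2$ this gives $[e_j,e_{n-1}]\in\mathfrak{g}_{j+n-1}=0$, and $e_n$ is central. Hence every $x$ with $x_1=0$ centralizes $\mathfrak{g}^{n-2}$. Conversely, for $x=\sum x_ie_i$ we get $[x,e_{n-1}]=x_1\lambda_{n-1}e_n$. Since $\lambda_{n-1}\neq 0$, centralizing $e_{n-1}$ forces $x_1=0$.

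\textbf{Step 2 (invariance).} The proof of Lemma~\ref{lem:nablae_k} already shows $\nabla(\mathfrak{g}^{r})=\mathfrak{g}^{r}$ for every automorphism $\nabla$ and every $r$. It follows that centralizers of these terms are invariant as well. Indeed, if $[x,\mathfrak{g}^{n-2}]=0$, then
\[
[\nabla(x),\mathfrak{g}^{n-2}]=[\nabla(x),\nabla(\mathfrak{g}^{n-2})]=\nabla([x,\mathfrak{g}^{n-2}])=0 .
\]
Therefore $\nabla(\operatorname{span}\{e_2,\dots,e_n\})\subseteq\operatorname{span}\{e_2,\dots,e_n\}$. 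Since $e_2$ lies in this span, $\nabla(e_2)=\sum_{i\ge2}b_ie_i$, which is exactly $b_1=0$.

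\textbf{Main obstacle.} The only delicate point is justifying the vanishing $[e_j,e_{n-1}]=0$ for $j\ge 2$ uniformly over all the algebras in Tables~1 and~2. The cleanest route is the grading argument above, rather than inspecting each table entry. Alternatively, one can check directly that no listed bracket $[e_j,e_{n-1}]$ with $j\ge2$ is nonzero. In every table the nonzero brackets $[e_i,e_j]$ with $i,j\ge 2$ satisfy $i+j\le n$, while $j+(n-1)\ge n+1$.
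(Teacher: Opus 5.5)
Your proof is correct and is essentially the paper's argument in invariant form. The paper applies $\nabla$ to $[e_2,e_{n-1}]=0$, writes $\nabla(e_{n-1})=\tau_{n-1}e_{n-1}+\tau_n e_n$, and uses that every bracket except $[e_1,e_{n-1}]=\lambda e_n$ vanishes by degree to get $b_1\tau_{n-1}\lambda e_n=0$. That is your centralizer computation, and your use of $\nabla(\mathfrak{g}^{n-2})=\mathfrak{g}^{n-2}$ (so that $e_{n-1}$ itself lies in the image) replaces, and makes precise, the paper's appeal to ``non-degeneracy'' for $\tau_{n-1}\neq 0$. One caveat applies to both arguments: they need $n\ge 4$ so that $\mathfrak{g}^{n-2}=\operatorname{span}\{e_{n-1},e_n\}$. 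This restriction cannot be removed, because for $\mathfrak{m}_0(3)$ the map $e_1\mapsto e_2$, $e_2\mapsto e_1$, $e_3\mapsto -e_3$ is an automorphism with $b_1=1$.
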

\begin{proof}
    In the general setting, for all algebras under consideration, the bracket relation is defined by
\[
[e_2, e_{n-1}] = 0.
\]
Applying the automorphism condition, we obtain
\[
[\nabla(e_2), \nabla(e_{n-1})] = 0.
\]
We have set
\[
\nabla(e_2) = \sum_{i=1}^n b_i e_i,
\]
and, by Lemma~\eqref{lem:nablae_k}, $\nabla(e_{n-1})$ admits the form
\[
\nabla(e_{n-1}) = \tau_{n-1} e_{n-1} + \tau_n e_n.
\]
Consequently, the commutator expands as
\[
\begin{aligned}
\relax[\nabla(e_2), \nabla(e_{n-1})]
&= \left[\sum_{i=1}^n b_i e_i,\ \tau_{n-1} e_{n-1} + \tau_n e_n\right] \\
&= b_1 \tau_{n-1}[e_1,e_{n-1}] + b_1 \tau_n [e_1,e_n]
   + b_2 \tau_{n-1}[e_2,e_{n-1}] + b_2 \tau_n [e_2,e_n] + \cdots.
\end{aligned}
\]
Since $[e_1,e_n]=0$, $[e_2,e_{n-1}]=0$, $[e_2,e_n]=0$, and all remaining brackets vanish by degree considerations (as their indices exceed the dimension $n$), the above expression reduces to
\[
[\nabla(e_2), \nabla(e_{n-1})] = b_1 \tau_{n-1} \lambda e_n = 0,
\]
where the scalar $\lambda$ is determined by
\[
[e_1, e_{n-1}] = \lambda e_n.
\]
Explicitly, $\lambda = 1$ for all algebras except $W^+(n)$, for which $\lambda = n-2$.
We note that $\tau_{n-1} \neq 0$; otherwise, the non-degeneracy (or the defining automorphism condition) would be violated. Therefore, from
\[
b_1 \tau_{n-1} \lambda e_n = 0,
\]
it follows immediately that
\[
b_1 = 0.
\]
\end{proof}
\begin{Lem}\label{lem:lower-triangular-unified}
    The matrix of $\nabla$ in the basis $\{e_1,\dots,e_n\}$ is lower triangular.
\end{Lem}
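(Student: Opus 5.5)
The plan is to assemble the statement from Lemma~\ref{lem:nablae_k} and Lemma~\ref{coeff b1}, reading the matrix of $\nabla$ column by column, with the $k$-th column recording the coordinates of $\nabla(e_k)$.

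\textbf{Columns $k\ge 3$.} Lemma~\ref{lem:nablae_k} gives directly
\[
\nabla(e_k)=\sum_{j=k}^{n}c_{kj}e_j .
\]
So the entries above the diagonal in these columns vanish.

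\textbf{Column $k=2$.} This column is $\nabla(e_2)=\sum_i b_ie_i$, and I must show $b_1=0$. I will rederive this rather than lean on Lemma~\ref{coeff b1} as stated, because that argument assumes $[e_2,e_{n-1}]=0$. This fails for $\mathfrak{m}_{0,2}$, $\mathfrak{m}_{0,3}$ and $\mathfrak{g}_{k,\alpha}$, e.g.\ $[e_2,e_{n-2}]\ne0$-type relations hit $e_n$.

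The cleaner route is filtration-invariant. By the invariance $\nabla(\mathfrak{g}^r)=\mathfrak{g}^r$ shown in the proof of Lemma~\ref{lem:nablae_k}, $\nabla$ induces an automorphism of $\mathfrak{g}/\mathfrak{g}^2\cong\operatorname{span}\{\bar e_1,\bar e_2\}$. I then use the centralizer of $\mathfrak{g}^{n-2}$ modulo $\mathfrak{g}^2$:
\[
C=\{x:[x,\mathfrak{g}^{n-3}]\subseteq \mathfrak{g}^{n-1}\}\quad\text{or more simply}\quad C=\{x:[x,\mathfrak{g}^{n-2}]=0\}.
\]
Since $\mathfrak{g}^{n-2}=\operatorname{span}\{e_{n-1},e_n\}$ and $[e_i,e_{n-1}]$ can be nonzero only for $i\le 1$ by degree ($i+n-1\le n$), we get $C=\operatorname{span}\{e_2,\dots,e_n\}$. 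This is a characteristic ideal, so $\nabla(C)=C$. Hence $\nabla(e_2)\in\operatorname{span}\{e_2,\dots,e_n\}$, i.e.\ $b_1=0$.

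This is exactly the content of Lemma~\ref{coeff b1}, but justified by grading degree. It holds because $[e_i,e_{n-1}]\in\mathfrak{g}_{i+n-1}=0$ for $i\ge2$, uniformly over the lists.

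\textbf{Column $k=1$.} $\nabla(e_1)=\sum a_ie_i$ has no constraint from above, since it is the first column. Thus every entry above the diagonal is zero, which is lower triangularity.

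\textbf{Invertibility.} I will also note that the diagonal entries are nonzero, as required for an automorphism. From $b_1=0$, the induced map on $\mathfrak{g}/\mathfrak{g}^2$ is triangular, giving $a_1b_2\ne0$. Then the recursion $\nabla(e_k)=\operatorname{ad}(\nabla(e_1))^{k-2}\nabla(e_2)$ gives diagonal entries $c_{kk}=a_1^{k-2}b_2\prod\lambda_i/\prod\lambda_i$, which are nonzero.

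The main obstacle is the $k=2$ column, which requires the uniform degree argument identifying $C$ as a characteristic subspace. Everything else is bookkeeping from Lemma~\ref{lem:nablae_k}.
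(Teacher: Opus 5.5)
Your proof is correct and is essentially the paper's: the paper combines Lemma~\ref{lem:nablae_k} (columns $k\ge 3$) with Lemma~\ref{coeff b1} ($b_1=0$). Your invariant-centralizer argument is the same fact repackaged — $e_2$ commutes with $\operatorname{span}\{e_{n-1},e_n\}$ while $e_1$ does not — with the paper's requirement $\tau_{n-1}\neq 0$ absorbed into $\nabla(\mathfrak{g}^{n-2})=\mathfrak{g}^{n-2}$. Both arguments tacitly need $n\ge 4$: for $\mathfrak{m}_0(3)$, the swap $e_1\leftrightarrow e_2$, $e_3\mapsto -e_3$ is an automorphism that is not lower triangular.

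Your stated reason for bypassing Lemma~\ref{coeff b1} is mistaken. In every algebra on the lists, $[e_2,e_{n-1}]\in\mathfrak{g}_{n+1}=\{0\}$; the extra relations of $\mathfrak{m}_{0,2}$, $\mathfrak{m}_{0,3}$ and $\mathfrak{g}_{k,\alpha}$ pair $e_2$ with $e_{n-2}$ or lower, never with $e_{n-1}$. That vanishing is exactly the degree fact your own argument uses.
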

The proof follows from Lemma~\eqref{lem:nablae_k},\eqref{coeff b1}

\medskip

\section{Automorphisms of a filiform Lie algebra}\label{sec3}

\subsection{Automorphisms of the one-parametr fliform Lie algebra}\label{subsec1}

\textbf{Automorphisms of the algebra \texorpdfstring{$g_{7,\alpha}$}{g(7)}.}
Let $g_{7,\alpha}$ with $e_1,e_2,...,e_7$  basis defined by Table~\eqref{tab:filiform}

For $x=\sum_{i=1}^7 x_i e_i$, $y=\sum_{j=1}^7 y_j e_j$ the Lie bracket is
\begin{equation}\label{eq:bracketg(7)}
\begin{aligned}
\relax[x,y] &= (x_1y_2 - x_2y_1)e_3 + (x_1y_3 - x_3y_1)e_4 \\
      &\quad + \bigl[(x_1y_4 - x_4y_1) + (2+\alpha)(x_2y_3 - x_3y_2)\bigr]e_5 \\
      &\quad + \bigl[(x_1y_5 - x_5y_1) + (2+\alpha)(x_2y_4 - x_4y_2)\bigr]e_6 \\
      &\quad + \bigl[(x_1y_6 - x_6y_1) + (1+\alpha)(x_2y_5 - x_5y_2) + (x_3y_4 - x_4y_3)\bigr]e_7
\end{aligned}
\end{equation}

\begin{Th}
      A linear map $\nabla:g_{7,\alpha} \to g_{7,\alpha}$ is an automorphism if and only if there exist complex numbers
\[
a_i\ ,b_i\in C
\]
such that
\begin{align*}
\nabla(e_1) &= a_1e_1+a_3e_3+a_4e_4+a_5e_5+a_6e_6+a_7e_7,\\[2mm]
\nabla(e_2) &= a_1^2 e_2 +b_3e_3+b_4e_4+b_5e_5+b_6e_6+b_7e_7,\\[2mm]
\nabla(e_3) &= a_1^3 e_3 + a_1 b_3 e_4 + a_1\bigl(b_4 - (2+\alpha)a_1 a_3\bigr) e_5 \\
            &\quad + a_1\bigl(b_5 - (2+\alpha)a_1 a_4\bigr) e_6 \\
            &\quad + \bigl(a_1 b_6 - (1+\alpha)a_1^2 a_5 - a_4 b_3 + a_3 b_4\bigr) e_7, \\[4pt]
\nabla(e_4) &= a_1^4 e_4 + a_1^2 b_3 e_5 + a_1^2\bigl(b_4 - (2+\alpha)a_1 a_3\bigr) e_6 \\
            &\quad + a_1\bigl(a_1 b_5 + a_3 b_3 - (3+\alpha)a_1^2 a_4\bigr) e_7, \\[4pt]
\nabla(e_5) &= a_1^5 e_5 + a_1^3 b_3 e_6 + a_1^3\bigl(b_4 - (1+\alpha)a_1 a_3\bigr) e_7, \\[4pt]
\nabla(e_6) &= a_1^6 e_6 + a_1^4 b_3 e_7, \\[4pt]
\nabla(e_7) &= a_1^7 e_7.
\end{align*}
\end{Th}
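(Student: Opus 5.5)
We will follow the strategy set up in Section~\ref{sec2}: reduce everything to the pair $(\nabla(e_1),\nabla(e_2))$, generate the remaining images through $[e_1,\cdot]$, and then impose the relations not involving $e_1$.

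\textbf{Setup.} By Lemma~\ref{lem:lower-triangular-unified} and Lemma~\ref{coeff b1} we write
\[
\nabla(e_1)=\sum_{i=1}^7 a_ie_i,\qquad \nabla(e_2)=\sum_{i=2}^7 b_ie_i .
\]
Invertibility forces $a_1\neq0$ and $b_2\neq0$, since these are diagonal entries of a triangular matrix. Because $[e_1,e_k]=e_{k+1}$ for $2\le k\le 6$, the automorphism property gives $\nabla(e_{k+1})=[\nabla(e_1),\nabla(e_k)]$. We will compute $\nabla(e_3),\dots,\nabla(e_7)$ successively using the explicit bracket \eqref{eq:bracketg(7)}. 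In general these depend on $a_1,\dots,a_7$ and $b_2,\dots,b_7$, including terms carrying $a_2$. The leading diagonal coefficients are $a_1^{k-2}b_2$ for $\nabla(e_k)$, $k\ge3$.

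\textbf{Imposing the remaining relations.} Next we impose $\nabla[e_i,e_j]=[\nabla(e_i),\nabla(e_j)]$ for every pair $2\le i<j$. The nonzero cases are $[e_2,e_3],[e_2,e_4],[e_2,e_5],[e_3,e_4]$. The zero cases include $[e_2,e_6]$, $[e_3,e_5]$, $[e_2,e_7]$, and so on; most of these vanish automatically by degree.

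We will start with the relation $[e_3,e_4]=e_7$, because its top-degree comparison does not involve $\alpha$. Comparing $e_7$-coefficients gives
\[
(a_1b_2)(a_1^2b_2)=a_1^5b_2,
\]
hence $b_2=a_1^2$. This avoids the degenerate parameter values $\alpha=-2,-1$, where the relations $[e_2,e_3]=(2+\alpha)e_5$ and $[e_2,e_5]=(1+\alpha)e_7$ lose their leading information.

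We will then return to $[e_2,e_3]$, $[e_2,e_4]$, $[e_2,e_5]$ and compare their subleading coefficients. These equations are linear in $a_2$, with coefficients built from $a_1$, $b_3$ and the structure constants. The aim is to extract $a_2=0$. Once $a_2=0$ and $b_2=a_1^2$ are substituted, the recursive formulas should collapse to the displayed expressions for $\nabla(e_3),\dots,\nabla(e_7)$. For example, the $e_5$-coefficient of $\nabla(e_3)=[\nabla e_1,\nabla e_2]$ is $a_1b_4-a_4b_2\cdot 0+\dots$, and it becomes $a_1(b_4-(2+\alpha)a_1a_3)$ once $b_2=a_1^2$ is used.

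\textbf{Main obstacle.} We expect the proof of $a_2=0$ to be the hardest step. For each individual relation the $a_2$-coefficient may carry a factor such as $(2+\alpha)$, $(1+\alpha)$ or $(3+\alpha)$, which can vanish for a particular $\alpha$. We will therefore collect the $a_2$-linear equations coming from $[e_2,e_3]$, $[e_2,e_4]$, $[e_2,e_5]$ and $[e_3,e_4]$, and check that no single value of $\alpha$ kills all of them at once. If that fails for some exceptional $\alpha$, it has to be treated separately.

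\textbf{Converse.} Finally, for the converse we take the displayed $\nabla$ and verify bracket preservation on all basis pairs $(e_i,e_j)$ with $i<j$, $i+j\le7$. The pairs $(e_1,e_k)$ hold by construction. The pairs $(e_2,e_3)$, $(e_2,e_4)$, $(e_2,e_5)$ and $(e_3,e_4)$ are short polynomial identities in $a_1,a_3,a_4,a_5,b_3,\dots,b_6$. All remaining pairs vanish on both sides by degree.
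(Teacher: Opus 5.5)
Your converse step fails, and it has to, because the statement is false as written. The pair $(e_2,e_3)$ does not give an identity. With the displayed formulas, the $e_7$-coefficient of $[\nabla(e_2),\nabla(e_3)]$ has two parts: $(1+\alpha)a_1^3\bigl(b_4-(2+\alpha)a_1a_3\bigr)$, coming from $[e_2,e_5]$, and $a_1b_3^2-a_1^3b_4$, coming from $[e_3,e_4]$. The $e_7$-coefficient of $(2+\alpha)\nabla(e_5)$ is $(2+\alpha)a_1^3\bigl(b_4-(1+\alpha)a_1a_3\bigr)$. Hence
\[
[\nabla(e_2),\nabla(e_3)]-(2+\alpha)\nabla(e_5)=a_1\bigl(b_3^2-2a_1^2b_4\bigr)e_7 .
\]
So every automorphism satisfies $b_3^2=2a_1^2b_4$. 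The paper's own proof reaches exactly this relation at the $e_7$-coefficient, but it is missing from the statement.

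Consequently the ``if'' direction is false for every $\alpha$. Take $a_1=b_3=1$ and all other parameters $0$. Then $\nabla(e_2)=e_2+e_3$, $\nabla(e_3)=e_3+e_4$ and $\nabla(e_5)=e_5+e_6$, so $[\nabla(e_2),\nabla(e_3)]=(2+\alpha)(e_5+e_6)+e_7\neq(2+\alpha)\nabla(e_5)$. What is provable is the statement with $b_4=b_3^2/(2a_1^2)$ added, together with $a_1\neq0$, which is needed for bijectivity and is also absent from the statement. With those two additions, your list of pairs to check is complete and the remaining pairs are genuine identities.

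Your worry about exceptional $\alpha$ is also justified, and at $\alpha=-2$ the ``only if'' direction fails as well. With general $a_2$ and $b_2=a_1^2$, the $e_6$-coefficient of $[\nabla(e_2),\nabla(e_3)]=(2+\alpha)\nabla(e_5)$ reads $2(2+\alpha)^2a_1^4a_2=0$. This gives $a_2=0$ in one line when $\alpha\neq-2$. But $\mathfrak{g}_{7,-2}$ coincides with $\mathfrak{m}_{0,1}(7)$. Define $D$ by $D(e_1)=e_2$, $D(e_6)=-e_7$, and $D=0$ on the other basis vectors. Then $I+D$ is an automorphism with $a_2=1$ and $\nabla(e_6)=e_6-e_7$, which is not of the displayed form. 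No separate treatment can rescue that case, so $\alpha\neq-2$ must be a hypothesis.

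The rest of your plan is sound: using $[e_3,e_4]=e_7$ to get $b_2=a_1^2$ without dividing by $2+\alpha$, and the recursion $\nabla(e_{k+1})=[\nabla(e_1),\nabla(e_k)]$ for the necessity part.
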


\begin{proof}

Let
\[
\nabla(e_1)=\sum_{i=1}^7 a_i e_i,\qquad
\nabla(e_2)=\sum_{i=1}^7 b_i e_i.
\]

Using $\nabla(e_3)=[\nabla(e_1),\nabla(e_2)]$ we obtain
\[
\begin{aligned}
\nabla(e_3) &= a_1b_2e_3 + a_1b_3e_4  + \bigl[a_1b_4 + (2+\alpha)(a_2b_3-a_3b_2)\bigr]e_5 \\
            &\quad + \bigl[a_1b_5 + (2+\alpha)(a_2b_4-a_4b_2)\bigr]e_6+ \bigl[a_1b_6 + (1+\alpha)(a_2b_5-a_5b_2) + (a_3b_4-a_4b_3)\bigr]e_7 .
\end{aligned}
\]
Since $e_4=[e_1,e_3]$, we have $\nabla(e_4)=[\nabla(e_1),\nabla(e_3)]$.  A straightforward computation gives
\[
\begin{aligned}
\nabla(e_4) &= a_1^2b_2e_4+ \bigl[a_1^2b_3 + a_1a_2b_2(2+\alpha)\bigr]e_5+ a_1\Bigl[a_1b_4 + (2+\alpha)(a_2b_3-a_3b_2) + a_2b_3(2+\alpha)\Bigr]e_6 \\
            &\quad + \Bigl[a_1^2b_5 + a_1(2+\alpha)(a_2b_4-a_4b_2) + a_2(1+\alpha)\bigl(a_1b_4 + (2+\alpha)(a_2b_3-a_3b_2)\bigr) + a_1a_3b_3 - a_1a_4b_2\Bigr]e_7 .
\end{aligned}
\]

There are two natural ways to compute $\nabla(e_5)$:
\[
\nabla(e_5) = [\nabla(e_1),\nabla(e_4)] \qquad\text{and}\qquad
\nabla(e_5) = \frac{1}{2+\alpha}[\nabla(e_2),\nabla(e_3)],
\]
where the second equality uses $[e_2,e_3]=(2+\alpha)e_5$.  Both expressions must coincide; equating the coefficients of the basis vectors yields relations among the parameters.

{First method: $[\nabla(e_1),\nabla(e_4)]$}
\[
\begin{aligned}
\nabla(e_5) &= a_1^3b_2e_5  + a_1^2\Bigl[a_1b_3 + 2a_2b_2(2+\alpha)\Bigr]e_6 \\
            &\quad + \Bigl[a_1^3b_4 + a_1^2(2+\alpha)(a_2b_3-a_3b_2) + a_1^2a_2b_3(2+\alpha) + a_2(1+\alpha)\bigl(a_1^2b_3 + a_1a_2b_2(2+\alpha)\bigr) + a_1^2a_3b_2\Bigr]e_7 .
\end{aligned}
\]

Second method: $\frac{1}{2+\alpha}[\nabla(e_2),\nabla(e_3)]$
First compute $[\nabla(e_2),\nabla(e_3)]$:
\[
[\nabla(e_2),\nabla(e_3)] = (2+\alpha)a_1b_2^2 e_5 + (2+\alpha)a_1b_2b_3 e_6 + \Bigl[(1+\alpha)b_2\bigl(a_1b_4+(2+\alpha)(a_2b_3-a_3b_2)\bigr) + a_1b_3^2 - a_1b_2b_4\Bigr] e_7 .
\]
Then dividing by $2+\alpha$ (which is non-zero in the generic case) gives
\[
\nabla(e_5) = a_1b_2^2 e_5 + a_1b_2b_3 e_6 + \frac{1}{2+\alpha}\Bigl[(1+\alpha)b_2\bigl(a_1b_4+(2+\alpha)(a_2b_3-a_3b_2)\bigr) + a_1b_3^2 - a_1b_2b_4\Bigr] e_7 .
\]

Equating the two expressions for $\nabla(e_5)$
Comparing the coefficients of $e_5$, $e_6$ and $e_7$ we obtain:

\begin{itemize}
\item \textbf{Coefficient of $e_5$:}
  \[
  a_1^3b_2 = a_1b_2^2 \;\Longrightarrow\; b_2 = a_1^2.
  \]

\item \textbf{Coefficient of $e_6$:}
  \[
  a_1^3b_3 + 2a_1^2a_2b_2(2+\alpha) = a_1b_2b_3.
  \]
  Substituting $b_2 = a_1^2$ and dividing by $a_1^3$ (since $a_1\neq0$) yields

  Actually, $a_1b_2b_3 = a_1\cdot a_1^2 b_3 = a_1^3 b_3$, so the right-hand side is $a_1^3b_3$.  The left-hand side is $a_1^3b_3 + 2a_1^2a_2b_2(2+\alpha) = a_1^3b_3 + 2a_1^2a_2\cdot a_1^2(2+\alpha) = a_1^3b_3 + 2a_1^4a_2(2+\alpha)$.  Thus we get
  \[
  a_1^3b_3 + 2a_1^4a_2(2+\alpha) = a_1^3b_3 \;\Longrightarrow\; 2a_1^4a_2(2+\alpha)=0.
  \]
  Since $\alpha\neq-2$ (generic case) and $a_1\neq0$, we deduce $a_2=0$.

\item \textbf{Coefficient of $e_7$:}
  After substituting $b_2=a_1^2$ and $a_2=0$, the two expressions become identical and impose no further restrictions; they merely confirm the consistency.  This yields the relation
  \[
  b_3^2 = 2a_1^2 b_4,
  \]
  which is obtained from the $e_7$ component after simplification.
\end{itemize}

The remaining images can be computed similarly.  For instance,
\[
\nabla(e_6) = [\nabla(e_1),\nabla(e_5)] = a_1^6 e_6 + a_1^4 b_3 e_7,
\]
and the consistency with $\frac{1}{2+\alpha}[\nabla(e_2),\nabla(e_4)]$ gives the same relations.  Finally,
\[
\nabla(e_7) = [\nabla(e_1),\nabla(e_6)] = a_1^7 e_7,
\]
and also $[\nabla(e_2),\nabla(e_5)]$ (using the relation $[e_2,e_5]=(1+\alpha)e_7$) yields $a_2=0$ and $b_2=a_1^2$ again.

This completes the determination of all automorphisms of $\mathfrak{g}_{7,\alpha}$.

\end{proof}
Matrix form:

\[
A_{g_{7,\alpha}} = \begin{pmatrix}
a_1      & 0        & 0        & 0        & 0 & 0        & 0 \\
0     & a_1^2    & 0        & 0        & 0 & 0        & 0 \\
a_3     & b_3      & a_1^3    & 0        & 0 & 0        & 0 \\
a_4    & b_4   & a_1 b_3  & a_1^4      & 0 & 0        & 0 \\
a_5      & b_5     & c_{3,5} & a_1^2 b_3  & a_1^5 & 0 & 0 \\
a_6      &b_6      & c_{3,6}    & c_{4,6}        & a_1^3b_3 & a_1^6        & 0 \\
a_7  &   b_7  &    c_{3,7} & c_{4,7} & c_{5,7} &  a_1^4 b_3 & a_1^7
\end{pmatrix},
\]

in this  $c_{3,5}=a_1\bigl(b_4 - (2+\alpha)a_1 a_3\bigr)$, $c_{3,6}= a_1\bigl(b_5 - (2+\alpha)a_1 a_4\bigr) $ , \\ $c_{3,7}=a_1 b_6 - (1+\alpha)a_1^2 a_5 - a_4 b_3 + a_3 b_4 $, $c_{4,6}= a_1^2\bigl(b_4 - (2+\alpha)a_1 a_3\bigr)$  , \\  $c_{4,7}=a_1\bigl(a_1 b_5 + a_3 b_3 - (3+\alpha)a_1^2 a_4\bigr) $  and  $c_{5,7}=a_1^3\bigl(b_4 - (1+\alpha)a_1 a_3\bigr)$

\textbf{Automorphisms of the algebra \texorpdfstring{$g_{8,\alpha}$}{g(8)}.}

 The Lie algebra $g= g_{8,\alpha}$ has a basis
$\{e_1,\dots,e_8\}$ in Table~\eqref{tab:filiform}

For $x=\sum_{i=1}^8 x_i e_i$, $y=\sum_{j=1}^8 y_j e_j$ the Lie bracket is
\begin{equation}\label{eq:bracketg(8)}
\begin{aligned}
\relax[x,y] &= (x_1y_2 - x_2y_1)e_3 + (x_1y_3 - x_3y_1)e_4 \\
      &\quad + \bigl[(x_1y_4 - x_4y_1) + (2+\alpha)(x_2y_3 - x_3y_2)\bigr]e_5 \\
      &\quad + \bigl[(x_1y_5 - x_5y_1) + (2+\alpha)(x_2y_4 - x_4y_2)\bigr]e_6 \\
      &\quad + \bigl[(x_1y_6 - x_6y_1) + (1+\alpha)(x_2y_5 - x_5y_2) + (x_3y_4 - x_4y_3)\bigr]e_7 \\
      &\quad + \bigl[(x_1y_7 - x_7y_1) + \alpha(x_2y_6 - x_6y_2) + (x_3y_5 - x_5y_3)\bigr]e_8.
\end{aligned}
\end{equation}

This bracet differecne to \eqref{eq:bracketg(7)} only :
\begin{equation}\label{diffg(8)}
    [x,y]_{g_{8, \alpha}}=[x,y]_{g_{7, \alpha}}+\bigl[(x_1y_7 - x_7y_1) + \alpha(x_2y_6 - x_6y_2) + (x_3y_5 - x_5y_3)\bigr]e_8
\end{equation}

\textbf{Automorphisms of the algebra \texorpdfstring{$g_{9,\alpha}$}{g(9)}.}

Let $\alpha\in C$ with $2\alpha+5\neq0$ (the generic case).  The Lie algebra
$g= g_{9,\alpha}$ has a basis $\{e_1,\dots,e_9\}$ by Table~\eqref{tab:filiform}

For $x=\sum_{i=1}^{9} x_i e_i$, $y=\sum_{j=1}^{9} y_j e_j$ in the Lie algebra $g_{9,\alpha}$ (with $\alpha\neq -\frac{5}{2}$), the Lie bracket is given by

This bracet differecne to \eqref{eq:bracketg(8)} only :
\begin{equation}\label{diffg(9)}
    [x,y]_{g_{9, \alpha}}=[x,y]_{g_{8, \alpha}}+\Bigl[(x_1 y_8 - x_8 y_1) + \frac{2\alpha^2+3\alpha-2}{2\alpha+5}(x_2 y_7 - x_7 y_2) + \frac{2\alpha+2}{2\alpha+5}(x_3 y_6 - x_6 y_3) + \frac{3}{2\alpha+5}(x_4 y_5 - x_5 y_4)\Bigr] e_9
\end{equation}

with $g_{9,\alpha}$  $\nabla(e_1) = a_1e_1+a_3e_3+a_4e_4+a_5e_5+a_6e_6+a_7e_7+a_8e_8+a_9e_9$  and $\nabla(e_2) = a_1^2 e_2 +b_3e_3+b_4e_4+b_5e_5+b_6e_6+b_7e_7+b_8e_8+b_9e_9$

\textbf{Automorphisms of the algebra \texorpdfstring{$g_{10,\alpha}$}{g(10)}.}

 The Lie algebra
$g= g_{10,\alpha}$ has a basis $\{e_1,\dots,e_{10}\}$ by Table~\eqref{tab:filiform}

For $x=\sum_{i=1}^{9} x_i e_i$, $y=\sum_{j=1}^{9} y_j e_j$ in the Lie algebra $g_{9,\alpha}$ (with $\alpha\neq -\frac{5}{2}$), the Lie bracket is given by

This bracet differecne to \eqref{diffg(9)} only :
\begin{equation}\label{diffg(10)}
    [x,y]_{g_{10, \alpha}}=[x,y]_{g_{9, \alpha}}+ \Bigl[(x_1y_9 - x_9y_1) + \frac{2\alpha^2+\alpha-1}{2\alpha+5}(x_2y_8 - x_8y_2) + \frac{2\alpha-1}{2\alpha+5}(x_3y_7 - x_7y_3) + \frac{3}{2\alpha+5}(x_4y_6 - x_6y_4)\Bigr] e_{10}
\end{equation}

\textbf{Automorphisms of the algebra \texorpdfstring{$g_{11,\alpha}$}{g(11)}.}

The Lie algebra $g= g_{11,\alpha}$ has a basis $\{e_1,\dots,e_{11}\}$  by Table~\eqref{tab:filiform}

For the Lie algebra $g_{11,\alpha}$ (with $\alpha\neq -\frac{5}{2},-1,-3$), the Lie bracket of two vectors
$x = \sum_{i=1}^{11} x_i e_i$ and $y = \sum_{j=1}^{11} y_j e_j$ is given by

This bracet differecne to \eqref{diffg(10)} only :
\begin{equation}\label{diffg(11)}
\begin{aligned}
\relax[x,y]_{g_{11, \alpha}} = [x,y]_{g_{10, \alpha}} &+ \Bigl[(x_1y_{10} - x_{10}y_1)  + \frac{2\alpha^3+2\alpha^2+3}{2(\alpha^2+4\alpha+3)}(x_2y_9 - x_9y_2) \\
&\quad + \frac{4\alpha^3+8\alpha^2-8\alpha-21}{2(\alpha^2+4\alpha+3)(2\alpha+5)}(x_3y_8 - x_8y_3)  + \frac{3(2\alpha^2+4\alpha+5)}{2(\alpha^2+4\alpha+3)(2\alpha+5)}(x_4y_7 - x_7y_4) \\
&\quad + \frac{3(4\alpha+1)}{2(\alpha^2+4\alpha+3)(2\alpha+5)}(x_5y_6 - x_6y_5)\Bigr] e_{11}.
\end{aligned}
\end{equation}

Based on the above reasoning, we obtain the following theorem

\begin{Th}\label{theorem gk aut}   A linear map $\nabla:g_{k,\alpha} \to g_{k,\alpha}, \qquad k=7,8,9,10,11,$  is an automorphism if and only if there exist complex numbers
\[
a_i\ ,b_i\in C \qquad such  \quad that
\]
matrix form
\end{Th}

\begin{equation}
 A_{g_{k,\alpha}} = \begin{pmatrix}\label{matrix gk}
a_1     & 0       & 0       & 0         & 0         & 0         & \ddots & 0      & 0 \\
0       & a_1^2   & 0       & 0         & 0         & 0         & \ddots & 0      & 0 \\
a_3     & b_3     & a_1^3   & 0         & 0         & 0         & \ddots & 0      & 0 \\
a_4     & b_4     & a_1 b_3 & a_1^4     & 0         & 0         & \ddots & 0      & 0 \\
a_5     & b_5     & c_{5,3} & a_1^2 b_3 & a_1^5     & 0         & \ddots & 0      & 0 \\
a_6     & b_6     & c_{6,3} & c_{6,4}   & a_1^3b_3  & a_1^6     & \ddots & 0      & 0 \\
\vdots  & \vdots  & \vdots  & \vdots    & \vdots    & \vdots    & \ddots & \vdots & \vdots\\
a_{k-1} & b_{k-1} & c_{k-1,3} & c_{k-1,4} & c_{k-1,5} & c_{k-1,6} & \ddots & a_1^{k-1} & 0 \\
a_{k}   & b_{k}   & c_{k,3} & c_{k,4}   & c_{k,5}   & c_{k,6}   & \ddots & a_1^{k-3}b_3 & a_1^k \\
\end{pmatrix},
\end{equation}
where
$c_{5,3}=a_1\bigl(b_4 - (2+\alpha)a_1 a_3\bigr)$, $c_{6,3}= a_1\bigl(b_5 - (2+\alpha)a_1 a_4\bigr) $ ,  $c_{7,3}=a_1 b_6 - (1+\alpha)a_1^2 a_5 - a_4 b_3 + a_3 b_4 $,  \\ $c_{8,3}= a_1 b_7 - \alpha a_1^2 a_6 + a_3 b_5 - a_5 b_3$ \\   $c_{9,3}= \frac{1}{5+2\alpha}\Bigl((2-3\alpha-2\alpha^2)a_1^2 a_7 - 2(1+\alpha)a_6 b_3 - 3a_5 b_4 + 3a_4 b_5 + 2a_3 b_6 + 2\alpha a_3 b_6 + (5+2\alpha)a_1 b_8\Bigr) $   \\ $c_{10,3}=\frac{1}{5+2\alpha}\Bigl((1-\alpha-2\alpha^2)a_1^2 a_8 + (1-2\alpha)a_7 b_3 - 3a_6 b_4 + 3a_4 b_6 - a_3 b_7 + 2\alpha a_3 b_7 + (5+2\alpha)a_1 b_9\Bigr) $\\   $c_{11,3}=\frac{1}{2(1+\alpha)(3+\alpha)(5+2\alpha)}\Bigl( -(15+6\alpha+10\alpha^2+14\alpha^3+4\alpha^4)a_1^2 a_9  + (21+8\alpha-8\alpha^2-4\alpha^3) a_8 b_3 -15 a_7 b_4 -12\alpha a_7 b_4 -6\alpha^2 a_7 b_4 -3 a_6 b_5 -12\alpha a_6 b_5 +3 a_5 b_6 +12\alpha a_5 b_6 +15 a_4 b_7 +12\alpha a_4 b_7 +6\alpha^2 a_4 b_7 -21 a_3 b_8 -8\alpha a_3 b_8 +8\alpha^2 a_3 b_8 +4\alpha^3 a_3 b_8 +2(15+26\alpha+13\alpha^2+2\alpha^3) a_1 b_{10} \Bigr)$       $c_{6,4}= a_1^2\bigl(b_4 - (2+\alpha)a_1 a_3\bigr)$  ,  \\   $c_{7,4}=a_1\bigl(a_1 b_5 + a_3 b_3 - (3+\alpha)a_1^2 a_4\bigr)$ , $c_{8,4}=- \bigl( a_1^2 b_6 + 2 a_1 a_3 b_4 - a_1 a_4 b_3 - (2+\alpha)a_1^3 a_5 - (2+\alpha)a_1^2 a_3^2 \bigr)$ , $c_{9,4
}=- \frac{a_1}{5+2\alpha}\Bigl((2+7\alpha+2\alpha^2)a_1^2 a_6 + 2(4+\alpha)a_5 b_3 - 3a_4 b_4 - 7a_3 b_5 - 4\alpha a_3 b_5 + (5+2\alpha)a_1\bigl((2+\alpha)a_3 a_4 - b_7\bigr)\Bigr)$,  \\   $c_{10,4}=\frac{1}{5+2\alpha}\Bigl((3-5\alpha-2\alpha^2)a_1^3 a_7 + a_1\bigl(-(5+2\alpha)a_6 b_3 - 3a_5 b_4 + 6a_4 b_5 + a_3 b_6 + 4\alpha a_3 b_6\bigr)(-1+2\alpha)a_3(-a_4 b_3 + a_3 b_4) + a_1^2\bigl(-3(2+\alpha)a_4^2 - (-1+\alpha+2\alpha^2)a_3 a_5 + (5+2\alpha)b_8\bigr)\Bigr), $   \\  $c_{11,4}=\frac{1}{2(5+2\alpha)(3+4\alpha+\alpha^2)}\Bigl((27+10\alpha-26\alpha^2-22\alpha^3-4\alpha^4)a_1^3 a_8 -3(5+4\alpha+2\alpha^2)a_4^2 b_3 +3(5+4\alpha+2\alpha^2)a_3 a_4 b_4 + (-21-8\alpha+8\alpha^2+4\alpha^3)a_3(-a_5 b_3 + a_3 b_5)  - a_1\bigl((9+16\alpha+20\alpha^2+4\alpha^3)a_7 b_3 +3(7+12\alpha+2\alpha^2)a_6 b_4 -3 a_5 b_5 -12\alpha a_5 b_5 -33 a_4 b_6 -36\alpha a_4 b_6 -12\alpha^2 a_4 b_6  +27 a_3 b_7 +4\alpha a_3 b_7 -22\alpha^2 a_3 b_7 -8\alpha^3 a_3 b_7\bigr) + a_1^2\bigl(-3(7+18\alpha+10\alpha^2+2\alpha^3)a_4 a_5 +2(3+24\alpha+10\alpha^2-4\alpha^3-2\alpha^4)a_3 a_6 +2(15+26\alpha+13\alpha^2+2\alpha^3)b_9\bigr)\Bigr)$   \\ $c_{7,5}=a_1^3\bigl(b_4 - (1+\alpha)a_1 a_3\bigr)$, $c_{8,5}=a_1^2\bigl(a_1 b_5 + 2a_3 b_3 - (3+\alpha)a_1^2 a_4\bigr)  $ , \\ $c_{9,5}=- \frac{a_1^2}{5+2\alpha}\Bigl((13+9\alpha+2\alpha^2)a_1^2 a_5 + 2(1+\alpha)a_4 b_3 - 6(2+\alpha)a_3 b_4  + a_1\bigl((14+15\alpha+4\alpha^2)a_3^2 - (5+2\alpha)b_6\bigr)\Bigr)$ , \\ $c_{10,5}=- \frac{a_1}{5+2\alpha}\Bigl((5+7\alpha+2\alpha^2)a_1^3 a_6 + (1-2\alpha)a_3^2 b_3 + 2a_1\bigl((4+\alpha)a_5 b_3 - 3(a_4 b_4 + (1+\alpha)a_3 b_5)\bigr)  + a_1^2\bigl((13+17\alpha+4\alpha^2)a_3 a_4 - (5+2\alpha)b_7\bigr)\Bigr)$,  \\  $c_{11,5}=- \frac{a_1}{2(5+2\alpha)(3+4\alpha+\alpha^2)}\Bigl((-3+18\alpha+52\alpha^2+26\alpha^3+4\alpha^4)a_1^3 a_7 -2 a_3\bigl((21+8\alpha-8\alpha^2-4\alpha^3)a_4 b_3 +3(-8-2\alpha+5\alpha^2+2\alpha^3)a_3 b_4\bigr) + a_1\bigl((-42-37\alpha+8\alpha^2+16\alpha^3+4\alpha^4)a_3^3 + (33+64\alpha+26\alpha^2+4\alpha^3)a_6 b_3  +3(5+4\alpha+2\alpha^2)a_5 b_4 -3(17+20\alpha+6\alpha^2)a_4 b_5 -3(-5+8\alpha+14\alpha^2+4\alpha^3)a_3 b_6\bigr)+ a_1^2\bigl(3(27+39\alpha+22\alpha^2+4\alpha^3)a_4^2 + 2(-21-6\alpha+19\alpha^2+17\alpha^3+4\alpha^4)a_3 a_5 -2(15+26\alpha+13\alpha^2+2\alpha^3)b_8\bigr)\Bigr)$ \\  $c_{8,6}=a_1^4\bigl(b_4 - \alpha a_1 a_3\bigr) $, $c_{9,6}=\frac{a_1^3}{5+2\alpha}\Bigl(-\bigl(12+11\alpha+2\alpha^2\bigr)a_1^2 a_4 + 6(2+\alpha)a_3 b_3 + (5+2\alpha)a_1 b_5\Bigr)$ , \\ $c_{10,6}=\frac{a_1^3}{5+2\alpha}\Bigl(-\bigl(13+9\alpha+2\alpha^2\bigr)a_1^2 a_5 + (1-2\alpha)a_4 b_3 + (11+8\alpha)a_3 b_4  + a_1\bigl(-(13+16\alpha+6\alpha^2)a_3^2 + (5+2\alpha)b_6\bigr)\Bigr)$ ,  \\ $c_{11,6}=- \frac{a_1^2}{2(5+2\alpha)(3+4\alpha+\alpha^2)}\Bigl((33+94\alpha+78\alpha^2+30\alpha^3+4\alpha^4)a_1^3 a_6 -6(-8-2\alpha+5\alpha^2+2\alpha^3)a_3^2 b_3 + a_1\bigl((45+64\alpha+32\alpha^2+4\alpha^3)a_5 b_3 -3(17+20\alpha+6\alpha^2)a_4 b_4  -(15+76\alpha+68\alpha^2+16\alpha^3)a_3 b_5\bigr) + 2 a_1^2\bigl((15+94\alpha+110\alpha^2+46\alpha^3+6\alpha^4)a_3 a_4 - (15+26\alpha+13\alpha^2+2\alpha^3)b_7\bigr)\Bigr)$  \\$c_{9,7}=a_1^5\Bigl(\frac{(2-3\alpha-2\alpha^2)a_1 a_3}{5+2\alpha} + b_4\Bigr)$ , $c_{10,7}=\frac{a_1^4}{5+2\alpha}\Bigl(-\bigl(9+11\alpha+2\alpha^2\bigr)a_1^2 a_4 + (11+8\alpha)a_3 b_3 + (5+2\alpha)a_1 b_5\Bigr)$    \\ $c_{11,7}= - \frac{a_1^4}{2(5+2\alpha)(3+4\alpha+\alpha^2)}\Bigl((75+146\alpha+110\alpha^2+34\alpha^3+4\alpha^4)a_1^2 a_5 + (-21-8\alpha+8\alpha^2+4\alpha^3)a_4 b_3 - (45+128\alpha+94\alpha^2+20\alpha^3)a_3 b_4 + a_1\bigl((78+179\alpha+182\alpha^2+88\alpha^3+16\alpha^4)a_3^2 - 2(15+26\alpha+13\alpha^2+2\alpha^3)b_6\bigr)\Bigr) $  \\  $c_{10,8}= a_1^6\Bigl(b_4 - \frac{(-1+\alpha+2\alpha^2)a_1 a_3}{5+2\alpha}\Bigr)$   \\   $c_{11,8}= \frac{a_1^5}{2(5+2\alpha)(3+4\alpha+\alpha^2)}\Bigl(-(39+126\alpha+112\alpha^2+38\alpha^3+4\alpha^4)a_1^2 a_4  + (5+2\alpha)\bigl((9+22\alpha+10\alpha^2)a_3 b_3 + 2(3+4\alpha+\alpha^2)a_1 b_5\bigr)\Bigr)$  \\  and so $c_{11,9}= - \frac{a_1^7}{2(1+\alpha)(3+\alpha)}\Bigl((3+2\alpha^2+2\alpha^3)a_1 a_3 - 2(3+4\alpha+\alpha^2)b_4\Bigr)$.

\subsection{Automorphisms of the algebra \texorpdfstring{$m_0(n)$}{m0(n)}.}\label{subsec2}

First, we consider automorphisms of the algebra \(m_0(n)\). For the algebra \(m_0(n)\), we introduce the following Table~\eqref{tab:filiformn}.

\begin{Lem}\label{lem:bracket-m0}
For any $X = \sum_{i=1}^n x_i e_i$ and $Y = \sum_{j=1}^n y_j e_j$ in $\mathfrak{m}_0(n)$,
\begin{equation}\label{eq:bracket-m0}
[X,Y] = \sum_{j=2}^{n-1} (x_1 y_j - x_j y_1)\, e_{j+1}.
\end{equation}
\end{Lem}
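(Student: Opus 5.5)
We prove the formula \eqref{eq:bracket-m0} directly from bilinearity of the bracket and the structure constants of $\mathfrak{m}_0(n)$ in Table~\eqref{tab:filiformn}. The only non-zero brackets between basis vectors are $[e_1,e_i]=e_{i+1}$ for $2\le i\le n-1$. By skew-symmetry, $[e_i,e_1]=-e_{i+1}$ for the same range, and $[e_i,e_j]=0$ whenever $i,j\ge 2$, or $i=j$, or one of the indices is $1$ and the other is $n$.

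First we expand
\[
[X,Y]=\sum_{i=1}^n\sum_{j=1}^n x_i y_j\,[e_i,e_j]
\]
using bilinearity. Every term with $i,j\ge 2$ vanishes, and so does the diagonal term $i=j=1$. Only two families survive. The first consists of the pairs $(1,j)$ with $2\le j\le n-1$, which contribute $x_1y_j e_{j+1}$. The second consists of the pairs $(i,1)$ with $2\le i\le n-1$, which contribute $-x_iy_1e_{i+1}$. The pairs $(1,n)$ and $(n,1)$ contribute nothing, because $[e_1,e_n]=0$.

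Next we rename the summation index $i$ as $j$ in the second family and combine the two sums. This gives $\sum_{j=2}^{n-1}(x_1y_j-x_jy_1)e_{j+1}$, which is exactly \eqref{eq:bracket-m0}.

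The computation is routine, and there is no real obstacle. The only point needing care is the boundary bookkeeping: the sum must stop at $j=n-1$ because $e_{n+1}$ does not exist, equivalently because $e_n$ is central by Lemma~\ref{lem:LCS-unified}. The table must also be read as saying that there are no additional brackets $[e_i,e_j]$ with $i,j\ge2$ in $\mathfrak{m}_0(n)$. This is precisely the feature that distinguishes $\mathfrak{m}_0(n)$ from $\mathfrak{m}_2(n)$ and the other sequences.
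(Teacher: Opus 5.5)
Your proposal is correct and follows essentially the same route as the paper: expand $[X,Y]$ by bilinearity, keep only the pairs $(1,j)$ and $(i,1)$ with $2\le i,j\le n-1$ (using skew-symmetry for the latter), and merge the two sums after renaming the index. You do not need to invoke the centre lemma for $[e_1,e_n]=0$, since this already follows from the table's convention that unlisted brackets vanish.
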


\begin{proof}
By bilinearity and skew-symmetry we expand $[X,Y]$:
\begin{equation}\label{eq:expand-m0}
[X,Y] = \Bigl[\sum_{i=1}^{n} x_i e_i,\; \sum_{j=1}^{n} y_j e_j\Bigr]
      = \sum_{i=1}^{n}\sum_{j=1}^{n} x_i y_j\,[e_i,e_j].
\end{equation}
According to the definition of $\mathfrak{m}_0(n)$, the Lie bracket $[e_i,e_j]$ is non-zero
only in the following cases:
\begin{itemize}
\item $i = 1$, $j = 2,\dots ,n-1$: $[e_1, e_j] = e_{j+1}$;
\item $i = 2,\dots ,n-1$, $j = 1$: $[e_i, e_1] = -[e_1, e_i] = -e_{i+1}$
      (by skew-symmetry).
\end{itemize}
For all other pairs $(i,j)$ we have $[e_i,e_j] = 0$.

Consequently, the only non-zero contributions to \eqref{eq:expand-m0} are those listed above.
Summing them gives
\begin{align*}
\sum_{j=2}^{n-1} x_1 y_j [e_1, e_j] + \sum_{i=2}^{n-1} x_i y_1 [e_i, e_1]
&= \sum_{j=2}^{n-1} x_1 y_j e_{j+1} + \sum_{i=2}^{n-1} x_i y_1 (-e_{i+1}) \\
&= \sum_{j=2}^{n-1} (x_1 y_j - x_j y_1)\, e_{j+1}.
\end{align*}
All other terms vanish, so we obtain the desired formula.
\end{proof}

\begin{Th}\label{Theoremm0n}
      A linear map $ \nabla : m_0(n) \to m_0(n)$  is an automorphism if and only if $ \nabla $ is of the following form:
\end{Th}
\begin{center}
\begin{enumerate}
    \item   $\nabla(e_1)=\sum_{i=1}^n a_i e_i,$\label{nabla(e_1)}
    \item   $ \nabla(e_2) =\sum_{i=1}^n b_ie_i,$\label{nabla(e_2)}
    \item   $\nabla(e_k) = a_1^{\,k-2}\sum_{j=2}^{\,n-k+2}b_j\,e_{j+k-2}. \qquad For \qquad 3\le k\le n, $\label{nabla(e_k)}
    \item    $\nabla(e_n)=a_1^{\,n-2} b_2 e_n.$\label{nabla(e_n)}

\end{enumerate}
\end{center}

\begin{proof}
For the basis vectors $e_1$ and $e_2$ we have
\[
\nabla(e_1)=\sum_{i=1}^n a_i e_i,\qquad
\nabla(e_2)=\sum_{i=2}^n b_i e_i,\qquad
\]
by Lemma~\eqref{lem:lower-triangular-unified}. In here, we obtain  \(a_1\neq0,\;b_2\neq0,\;\).

If \(a_1=0 \quad or \quad  b_2=0
\) This case fails to meet the criteria for an automorphism, as the determinant of its associated transformation matrix vanishes (i.e., equals zero), which consequently breaks bijectivity.

The automorphism condition together with the defining brackets property that
\begin{equation}\label{eq:recur-m0}
\nabla(e_{k+1}) = [\nabla(e_1),\nabla(e_k)] \qquad (2\le k\le n-1).
\end{equation}
With \(b_1=0\)  by Lemma~\eqref{lem:lower-triangular-unified} the third column becomes
\[
\nabla(e_3) =\sum_{j=2}^{n-1} (a_1 b_j - a_j b_1)\, e_{j+1}
= \sum_{j=2}^{n-1} (a_1 b_j - a_j \cdot 0)\, e_{j+1}
= a_1 \sum_{j=2}^{n-1} b_j\, e_{j+1}
\]
giving the diagonal entry \(a_{3,3}=a_1b_2\).  By induction we obtain the
simple closed form: for \(3\le k\le n\),
\begin{equation}\label{eq:closed-m0}
\nabla(e_k) = a_1^{\,k-2}\sum_{j=2}^{\,n-k+2}b_j\,e_{j+k-2},
\end{equation}
where we set \(b_j=0\) if \(j>n\) and the sum is zero when the upper bound
\(n-k+2<2\) (which happens only for \(k=n\)).

The formula holds for \(k=3\) as shown.  Assume it holds for some \(k\) (\(3\le k<n\)).
Then
\begin{align*}
\nabla(e_{k+1}) &= [\nabla(e_1),\nabla(e_k)] \\
  &= \Bigl[a_1 e_1+\sum_{i=2}^{n}a_i e_i,\;
           a_1^{k-2}\sum_{j=2}^{n-k+2}b_j e_{j+k-2}\Bigr] \\
  &= a_1^{k-2}\sum_{j=2}^{n-k+2}b_j\,
     \Bigl(a_1[e_1,e_{j+k-2}] + \sum_{i=2}^{n}a_i\,[e_i,e_{j+k-2}]\Bigr).
\end{align*}
Only the term \(a_1 e_1\) contributes to non-zero brackets, because
\([e_1,e_{m}]\) is the only non-zero bracket.  Hence
\[
\nabla(e_{k+1}) = a_1^{\,k-1}\sum_{j=2}^{n-k+2}b_j\,[e_1,e_{j+k-2}]
               = a_1^{\,k-1}\sum_{j=2}^{n-k+2}b_j\,e_{j+k-1}.
\]
The index \(j+k-1\le n\) requires \(j\le n-k+1\), so the sum becomes
\(\sum_{j=2}^{n-k+1}b_j\,e_{j+(k+1)-2}\), which is exactly the formula for \(k+1\).

Collecting the results, the matrix of an arbitrary automorphism of \(\mathfrak{m}_0(n)\)
in the basis \(\{e_1,\dots ,e_n\}\) is
\[
A_{m_0(n)} = \begin{pmatrix}\label{matrix m0n}
a_1 & 0      & 0              & 0                & \cdots & 0                  & 0 \\
a_2 & b_2 & 0              & 0                & \cdots & 0                  & 0 \\
a_3 & b_3 & a_1b_2 & 0                & \cdots & 0                  & 0 \\
a_4 & b_4 & a_1b_3 & a_1^2b_2 & \cdots & 0                  & 0 \\
\vdots   & \vdots  & \vdots         & \vdots           & \ddots & \vdots             & 0 \\
a_{n-1} & b_{n-1} & a_1b_{n-2} & a_1^2b_{n-3} & \cdots & a_1^{\,n-3}b_2 & 0 \\
a_n & b_n & a_1b_{n-1} & a_1^2b_{n-2} & \cdots & a_1^{\,n-3}b_3 & a_1^{\,n-2} b_2
\end{pmatrix},
\]

\end{proof}
\subsection{Automorphisms of the algebra \texorpdfstring{$m_2(n)$}{m2(n)}}\label{subsec3}

Now we construct automorphisms for the algebra $ m_2(n) $ by Table~\eqref{tab:filiformn}
All other brackets are zero .

For $x=\sum_{i=1}^n x_i e_i$ and $y=\sum_{j=1}^n y_j e_j$ in $m_2(n)$, the Lie bracket :

\begin{equation}\label{eq:brackets-m2n}
    [x,y] = \sum_{j=2}^{n-1} (x_1 y_j - x_j y_1)\, e_{j+1}
      + \sum_{j=3}^{n-2} (x_2 y_j - x_j y_2)\, e_{j+2}.
\end{equation}

The second sum is empty when $n=5$ (only $j=3$ gives a term) and non-empty for $n\ge6$.

For $k\ge 3$ we have $e_k=[e_1,e_{k-1}]$ (since $k-1\ge 2$).  Applying $\nabla$ gives

\begin{equation}\label{eq:brackets-m2nnablae_k}
    \nabla(e_k) = [\nabla(e_1),\nabla(e_{k-1})], \qquad k=3,\dots,n.
\end{equation}

Iterating $k-2$ times we obtain
\begin{equation}\label{eq:brackets-m2nnabla(e_k)}
    \nabla(e_k) = \underbrace{[\nabla(e_1),[\nabla(e_1),\dots,[\nabla(e_1),\nabla(e_2)]\dots]]}_{k-2\text{ times}}
          = ad(\nabla(e_1))^{k-2}(\nabla(e_2)).
\end{equation}

\begin{Th}\label{theorem m_2(n)}
    Let $\mathfrak{m}_2(n)$ with $n\ge5$ and the basis defined by Table~\eqref{tab:filiformn} .  A linear map $\nabla:{m}_2(n) \to {m}_2(n)$ is an automorphism if and only if there exist complex numbers
\[
a_i\ ,b_i, c_{i,j}\in C
\]
such that
\begin{align*}
\nabla(e_1) &= a_1 e_1 + \sum_{i=3}^{n} a_i e_i,\\[2mm]
\nabla(e_2) &= a_1^2 e_2 + \sum_{j=3}^{n} b_j e_j,\\[2mm]
\nabla(e_k)& = a_1^{k} e_k + a_1^{k-2} b_3 e_{k+1} + \sum_{j=4}^{n-k+2} a_1^{k-2} \Bigl( b_j - a_1 a_{j-1} \Bigr) e_{j+k-2}, \quad (3 \le k \le n-1),\\[2mm]
\nabla(e_n) &= a_1^n e_n.
 \end{align*}
\end{Th}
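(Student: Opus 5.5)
The approach is to exploit the fact that $\mathfrak{m}_2(n)$ is generated by $e_1,e_2$. By \eqref{eq:brackets-m2nnabla(e_k)}, an automorphism is fully determined by $\nabla(e_1)=\sum a_ie_i$ and $\nabla(e_2)=\sum b_ie_i$. By Lemma~\eqref{coeff b1} and Lemma~\eqref{lem:lower-triangular-unified} we already have $b_1=0$ and a lower triangular matrix. Invertibility then forces $a_1\neq0$ and $b_2\neq0$, since otherwise the diagonal product vanishes. The proof therefore splits into three parts: (i) pin down the remaining constraints on $a_2$ and $b_2$; (ii) compute $\nabla(e_k)$ in closed form from the recursion \eqref{eq:brackets-m2nnablae_k}; (iii) prove the converse, namely that every map of the displayed form is an automorphism.

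For (i), I will compute $\nabla(e_3)=[\nabla(e_1),\nabla(e_2)]$ and $\nabla(e_4)=[\nabla(e_1),\nabla(e_3)]$ using the bracket formula \eqref{eq:brackets-m2n}. I then evaluate $\nabla(e_5)$ in two ways: as $[\nabla(e_1),\nabla(e_4)]$, and as $[\nabla(e_2),\nabla(e_3)]$ using $[e_2,e_3]=e_5$. Comparing the $e_5$-coefficients gives $a_1^3b_2=a_1b_2^2$, hence $b_2=a_1^2$. The $e_6$-coefficients should then yield a term proportional to $a_1^{m}a_2$ that must vanish, so $a_2=0$; this mirrors the argument already carried out for $\mathfrak{g}_{7,\alpha}$. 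For $n=5$ the $e_6$ comparison is unavailable. In that case I will instead look at the $e_5$-coefficient of $[\nabla(e_2),\nabla(e_3)]$ versus $\nabla([e_2,e_3])$ after substituting $\nabla(e_4)$, or use $[e_2,e_4]=0$. Note that $[e_2,e_4]$ is zero for $n=5$ but equals $e_6$ for $n\ge6$, so the case distinction matters here.

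For (ii), with $a_2=0$ and $b_2=a_1^2$, formula \eqref{eq:brackets-m2n} gives the key simplification
\[
[\nabla(e_1),y]=a_1\sum_{j\ge2}y_je_{j+1}-\sum_{j\ge3}a_j y_1 e_{j+1}-\sum_{j\ge3}a_j y_2 e_{j+2}.
\]
When $y=\nabla(e_k)$ with $k\ge3$, we have $y_1=y_2=0$, so $\operatorname{ad}\nabla(e_1)$ acts on $\nabla(e_k)$ simply as $a_1$ times the shift $e_j\mapsto e_{j+1}$. The only genuinely non-shift step is therefore $k=3$. There I obtain
\[
\nabla(e_3)=a_1^3e_3+a_1b_3e_4+\sum_{j\ge4}\bigl(a_1b_j-a_1^2a_{j-1}\bigr)e_{j+1}.
\]
An induction on $k$, exactly as in the proof of Theorem~\ref{Theoremm0n}, then produces the closed form for $3\le k\le n$. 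I expect the indices and the powers of $a_1$ in the statement to need reconciling with this computation, and I will adjust them to match it.

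For (iii), the converse is the step I expect to be the main obstacle. By construction, a linear map of the stated form satisfies $\nabla[e_1,e_k]=[\nabla e_1,\nabla e_k]$. It remains to check the relations $[\nabla(e_2),\nabla(e_j)]=\nabla(e_{j+2})$ for $3\le j\le n-2$, together with the vanishing brackets $[\nabla(e_i),\nabla(e_j)]=0$ for $i,j\ge3$ and for $[\nabla(e_2),\nabla(e_{n-1})]$. I would first try induction on $j$ via the Jacobi identity. The $j=3$ case reduces to the $e_5$ identity already used in (i). The inductive step writes $e_{j+1}=[e_1,e_j]$ and applies
\[
[\nabla e_2,[\nabla e_1,\nabla e_j]]=[[\nabla e_2,\nabla e_1],\nabla e_j]+[\nabla e_1,[\nabla e_2,\nabla e_j]].
\]
The first term is $-[\nabla e_3,\nabla e_j]$, and it requires separately showing that brackets among elements of $\mathfrak{g}^2$ behave correctly. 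Those brackets involve only the $e_2$-row relations shifted by degree, and I expect this grading bookkeeping to be the delicate part.
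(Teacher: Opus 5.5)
\textbf{The case $n=5$ cannot be handled as you propose.} For $n\ge 6$ your forward direction is the paper's argument. You compare the $e_5$- and $e_6$-coefficients of $[\nabla(e_1),\nabla(e_4)]$ and $[\nabla(e_2),\nabla(e_3)]$ to get $b_2=a_1^2$ and $2a_1^4a_2=0$, then induct with $\operatorname{ad}\nabla(e_1)$ acting on $\operatorname{span}\{e_3,\dots,e_n\}$ as $a_1$ times the shift. Your $\nabla(e_3)$ already agrees with the displayed formula, so no indices or powers need reconciling. The gap is $n=5$: you plan to extract $a_2=0$ from other relations, but no relation can give it, because it is false. In $\mathfrak{m}_2(5)$ the only extra bracket is $[e_2,e_3]=e_5$, so $[e_2,e_4]=0$. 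Then for every $t\in\mathbb{C}$ the linear map
\[
e_1\mapsto e_1+te_2,\quad e_2\mapsto e_2,\quad e_3\mapsto e_3,\quad e_4\mapsto e_4+te_5,\quad e_5\mapsto e_5
\]
preserves all brackets; for instance $[e_1+te_2,e_4+te_5]=e_5$ and $[e_2,e_4+te_5]=0$. It is therefore an automorphism with $a_2=t\neq0$. Both of your fallbacks are vacuous. The $e_5$-comparison only reproduces $b_2=a_1^2$, because the $a_2$-contribution to $[\nabla(e_1),\nabla(e_4)]$ is a multiple of $[e_2,e_4]=0$. The relation $[\nabla(e_2),\nabla(e_4)]=0$ holds identically, since $\nabla(e_4)\in\operatorname{span}\{e_4,e_5\}$ and $[e_2,e_4]=[e_2,e_5]=0$. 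So the statement is correct only for $n\ge6$. For $n=5$ there is an extra free parameter $a_2$, and $\nabla(e_3)$, $\nabla(e_4)$ acquire the terms $a_2b_3e_5$ and $a_1^3a_2e_5$. The paper's proof has the same hole: it uses the $e_6$-coefficient without noticing that it does not exist when $n=5$.

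\textbf{The converse.} The paper does not prove this direction at all. Your plan for it is sound but much easier than you expect, and it needs the hypothesis $a_1\neq0$, which the statement omits. In $\mathfrak{m}_2(n)$ the subspace $\operatorname{span}\{e_3,\dots,e_n\}$ is abelian, so $[\nabla(e_i),\nabla(e_j)]=0$ for $i,j\ge3$ automatically; the brackets inside $\mathfrak{g}^2$ need no bookkeeping. For $j\ge3$ you get $[\nabla(e_2),\nabla(e_j)]=a_1^2[e_2,\nabla(e_j)]$, which is $a_1^2S^2\nabla(e_j)$, where $S$ is the shift $e_m\mapsto e_{m+1}$ with $e_n\mapsto0$. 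Since $\nabla(e_{j+2})=(a_1S)^2\nabla(e_j)$, this gives $[\nabla(e_2),\nabla(e_j)]=\nabla([e_2,e_j])$ for all $j\ge3$, including the vanishing cases $j=n-1,n$. No Jacobi induction is needed.
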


\begin{proof}

 we have by Lemma~\eqref{lem:lower-triangular-unified}
\[
b_1 = 0.
\]

Now we show $b_2=a_1^2$ and $a_2=0$

Using $\nabla(e_3)=[\nabla(e_1),\nabla(e_2)]$ we obtain
\[
\begin{aligned}
\nabla(e_3) = a_1 b_2 e_3 + a_1 b_3 e_4 + (-a_3 b_2 + a_2 b_3 + a_1 b_4) e_5 + (-a_4 b_2 + a_2 b_4 + a_1 b_5) e_6 + .. .
\end{aligned}
\]
Since $e_4=[e_1,e_3]$, we have $\nabla(e_4)=[\nabla(e_1),\nabla(e_3)]$.  A straightforward computation gives
\[
\begin{aligned}
\nabla(e_4) ={} & a_1^2 b_2 e_4 + \bigl(a_1 a_2 b_2 + a_1^2 b_3\bigr) e_5 + \bigl(-a_1 a_3 b_2 + 2a_1 a_2 b_3 + a_1^2 b_4\bigr) e_6 +... .
\end{aligned}
\]

There are two natural ways to compute $\nabla(e_5)$:
\[
\nabla(e_5) = [\nabla(e_1),\nabla(e_4)] \qquad\text{and}\qquad
\nabla(e_5) = [\nabla(e_2),\nabla(e_3)],
\]

{First method: $[\nabla(e_1),\nabla(e_4)]$}
\[
\begin{aligned}
\nabla(e_5) = a_1^3 b_2e_5+ \left(2a_1^2 a_2 b_2 + a_1^3 b_3\right)e_6+...
\end{aligned}
\]

Second method: $[\nabla(e_2),\nabla(e_3)]$
First compute $[\nabla(e_2),\nabla(e_3)]$:
\[
[\nabla(e_2),\nabla(e_3)] = \nabla(e_5)= a_1 b_2^2 e_5
+ a_1 b_2 b_3 e_6 +..
\]

Equating the two expressions for $\nabla(e_5)$
Comparing the coefficients of $e_5$ and $e_6$  we obtain:

\begin{itemize}
\item \textbf{Coefficient of $e_5$:}
  \[
  a_1^3b_2 = a_1b_2^2 \;\Longrightarrow\; b_2 = a_1^2.
  \]

\item \textbf{Coefficient of $e_6$:}
  \[
  a_1^3b_3 + 2a_1^2a_2b_2(2+\alpha) = a_1b_2b_3.
  \]
  Substituting $b_2 = a_1^2$ and dividing by $a_1^3$ (since $a_1\neq0$) yields

  Actually, $a_1b_2b_3 = a_1\cdot a_1^2 b_3 = a_1^3 b_3$, so the right-hand side is $a_1^3b_3$.  The left-hand side is $a_1^3b_3 + 2a_1^2a_2b_2(2+\alpha) = a_1^3b_3 + 2a_1^2a_2\cdot a_1^2(2+\alpha) = a_1^3b_3 + 2a_1^4a_2(2+\alpha)$.  Thus we get
  \[
  a_1^3b_3 + 2a_1^4a_2(2+\alpha) = a_1^3b_3 \;\Longrightarrow\; 2a_1^4a_2(2+\alpha)=0.
  \]
  Since $\alpha\neq-2$ (generic case) and $a_1\neq0$, we deduce $a_2=0$.

\end{itemize}

so we have
\[
\nabla(e_k) = [\nabla(e_1),\nabla(e_{k-1})] = \underbrace{[\nabla(e_1),[\nabla(e_1),\dots,[\nabla(e_1),\nabla(e_2)]\dots]]}_{k-2\text{ times}}
          = ad(\nabla(e_1))^{k-2}(\nabla(e_2)).
\]
Then, for every $3\le k\le n$,
\[
\nabla(e_k)=a_1^{k-2} b_2 e_k + a_1^{k-2} b_3 e_{k+1} + \sum_{m=k+2}^{n} a_1^{k-2} \bigl( b_{m-k+2} - a_1 a_{m-k+1} \bigr) e_m,
\tag{1}
\]
with the convention that the sum is zero if $k+2>n$, and $b_m=0$ for $m>n$.

We proceed by induction on $k$.

\medskip
\noindent\textbf{Base case: $k=3$.}
Since $e_3=[e_1,e_2]$, we have
\[
\nabla(e_3)=[\nabla(e_1),\nabla(e_2)].
\]
Using $\nabla(e_1)=a_1 e_1+\sum_{i=3}^n a_i e_i$ (because $a_2=0$) and $\nabla(e_2)=b_2 e_2+\sum_{j=3}^n b_j e_j$, and the fact that $[e_i,e_j]=0$ for all $i,j\ge3$, we obtain
\[
\nabla(e_3)=a_1 b_2 e_3 + a_1 \sum_{j=3}^{n-1} b_j e_{j+1} - b_2 \sum_{i=3}^{n-2} a_i e_{i+2}.
\]
Changing indices in the sums and using $b_2=a_1^2$, we get
\[
\nabla(e_3)=a_1 b_2 e_3 + a_1 b_3 e_4 + \sum_{m=5}^{n} a_1\bigl( b_{m-1} - a_1 a_{m-2} \bigr) e_m.
\]
This is exactly formula (1) for $k=3$ (since $a_1^{3-2}=a_1$).  Thus the base case holds.

\medskip
\noindent\textbf{Inductive step.}
Assume that formula (1) holds for some $k$ with $3\le k\le n-1$.  We prove it for $k+1$.

Since $e_{k+1}=[e_1,e_k]$, we have
\[
\nabla(e_{k+1})=[\nabla(e_1),\nabla(e_k)].
\]
As before, $\nabla(e_1)=a_1 e_1+\sum_{i=3}^n a_i e_i$.  Because $\nabla(e_k)$ is a linear combination of $e_k,e_{k+1},\dots,e_n$ (by the induction hypothesis), and $[e_i,e_j]=0$ for all $i,j\ge3$, the terms $\sum_{i=3}^n a_i e_i$ give no contribution.  Hence
\[
\nabla(e_{k+1})=a_1 [e_1,\nabla(e_k)].
\]

Substitute the expression for $\nabla(e_k)$ from the induction hypothesis:
\[
\begin{aligned}
\nabla(e_{k+1})
&= a_1 [e_1,\; a_1^{k-2} b_2 e_k + a_1^{k-2} b_3 e_{k+1} + \sum_{m=k+2}^{n} a_1^{k-2} \bigl( b_{m-k+2} - a_1 a_{m-k+1} \bigr) e_m ] \\
&= a_1^{k-1} b_2 [e_1,e_k] + a_1^{k-1} b_3 [e_1,e_{k+1}] + \sum_{m=k+2}^{n} a_1^{k-1} \bigl( b_{m-k+2} - a_1 a_{m-k+1} \bigr) [e_1,e_m].
\end{aligned}
\]
Using $[e_1,e_s]=e_{s+1}$ for $s\le n-1$, and $[e_1,e_n]=0$, we get
\[
\begin{aligned}
\nabla(e_{k+1})
&= a_1^{k-1} b_2 e_{k+1} + a_1^{k-1} b_3 e_{k+2} \\
&\quad + \sum_{m=k+2}^{n-1} a_1^{k-1} \bigl( b_{m-k+2} - a_1 a_{m-k+1} \bigr) e_{m+1}.
\end{aligned}
\]
Now we re-index the sum: let $j=m+1$.  Then when $m=k+2$, $j=k+3$, and when $m=n-1$, $j=n$.  Thus
\[
\nabla(e_{k+1})
= a_1^{k-1} b_2 e_{k+1} + a_1^{k-1} b_3 e_{k+2}
+ \sum_{j=k+3}^{n} a_1^{k-1} \bigl( b_{j-(k+1)+2} - a_1 a_{j-(k+1)+1} \bigr) e_j.
\]
This is precisely formula (1) with $k$ replaced by $k+1$, because $(k+1)-2 = k-1$.

\medskip
\noindent Thus, by induction, formula (1) holds for all $3\le k\le n$.  For $k=n$, the sum is empty and we obtain
\[
\nabla(e_n)=a_1^{n-2} b_2 e_n = a_1^{n} e_n
\]

\textbf{Matrix form for $n\ge 6$}

Collecting the results, the matrix of \texorpdfstring{$\nabla$}{nabla} in the basis $\{e_1,\dots,e_n\}$ is lower triangular:

\[
A = \begin{pmatrix}\label{matrix m2n}
a_1      & 0        & 0        & 0        & \cdots & 0        & 0 \\
0      & a_1^2    & 0        & 0        & \cdots & 0        & 0 \\
a_3      & b_3      & a_1^3    & 0        & \cdots & 0        & 0 \\
a_4      & b_4      & a_1 b_3  & a_1^4    & \cdots & 0        & 0 \\
\vdots   & \vdots   & \vdots   & \vdots   & \ddots & \vdots   & \vdots \\
a_{n-1}  & b_{n-1}  & c_{n-1,3} & c_{n-1,4} & \cdots & a_1^{\,n-1} & 0 \\
a_n      & b_n      & c_{n,3}   & c_{n,4}   & \cdots & c_{n,n-1}  & a_1^{\,n}
\end{pmatrix},
\]
where the entries $c_{j,k}$ for $k\ge3$ are defined recursively
\begin{equation}\label{eq:coefficients-Cjk}
C(j, k) = \begin{cases}
a_1^{k-2} (b_{j-1} - a_1 a_{j-2}), & \text{if } 5 \le j \le n-k+3.
\end{cases}
\end{equation}
\end{proof}

We can similarly prove the following theorems.

\subsection{Automorphisms of the algebra \texorpdfstring{$W^+(n)$}{W+(n)}}\label{subsec4}
Let $n \ge 5$ and consider the $n$-dimensional complex Lie algebra $W^+(n)$
with basis $\{e_1,\dots,e_n\}$ and non-zero brackets introduce the following Table~\eqref{tab:filiformn}.
For $x=\sum_{i=1}^n x_i e_i$ and $y=\sum_{j=1}^n y_j e_j$ in $W^+(n)$
\begin{equation}\label{eq:bracket}
[x,y] = \sum_{1\le i<j,\; i+j\le n} (j-i)(x_i y_j - x_j y_i)\, e_{i+j}.
\end{equation}

\begin{Th}\label{W+(n)umumiy}
Let $W^+(n)$ with $n\ge5$ and the basis defined in Table 1.  A linear map $\nabla:W^+(n) \to W^+(n)$ is an automorphism if and only if there exist complex numbers
\[
a_i, b_i\in C,\quad
\]
such that
\[
\begin{aligned}
\nabla(e_1) &= a_1e_1+\sum_{i=3}^{n} a_i e_i,\\[2mm]
\nabla(e_2) &= a_1^2 e_2 + \sum_{j=3}^{n} b_j e_j,\\[2mm]
\nabla(e_k) &=\sum_{p=k}^{n} C(p,k)\, e_p,\qquad 1\le k\le n., \qquad where \\[2mm]
C(p,k)&=\frac{1}{k-2}\sum_{i=1}^{\lfloor (p-1)/2 \rfloor}
      (p-2i)\Bigl( a_i\, C(p-i,k-1) - a_{p-i}\, C(i,k-1) \Bigr); \quad C(p,1)=a_p,\quad C(p,2)=b_p.\\[2mm]
\nabla(e_n) &= a_1^n e_n.
\end{aligned}
\]
\end{Th}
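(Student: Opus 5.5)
The plan is to follow the template used for Theorem~\ref{theorem m_2(n)}. By Lemma~\ref{lem:lower-triangular-unified}, any automorphism $\nabla$ of $W^+(n)$ is lower triangular. Writing $\nabla(e_1)=\sum_{i}a_ie_i$ and $\nabla(e_2)=\sum_i b_ie_i$, Lemma~\ref{coeff b1} gives $b_1=0$, and invertibility forces $a_1\neq0$ and $b_2\neq0$.

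Since $[e_1,e_{k-1}]=(k-2)e_k$ in $W^+(n)$, I will divide by $k-2$ to get the recursion
\[
\nabla(e_k)=\frac{1}{k-2}\,[\nabla(e_1),\nabla(e_{k-1})],\qquad 3\le k\le n.
\]
Expanding this with the bracket formula \eqref{eq:bracket} produces exactly the stated recursion for $C(p,k)$. The coefficient of $e_p$ in $[x,y]$ is $\sum_{i<j,\,i+j=p}(j-i)(x_iy_j-x_jy_i)$. Setting $j=p-i$, $x=\nabla(e_1)$ and $y=\nabla(e_{k-1})$ gives the sum over $1\le i\le\lfloor (p-1)/2\rfloor$ of $(p-2i)\bigl(a_iC(p-i,k-1)-a_{p-i}C(i,k-1)\bigr)$, with initial data $C(p,1)=a_p$ and $C(p,2)=b_p$. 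This is routine bookkeeping. By induction on $k$, using triangularity of $\nabla(e_{k-1})$, we get $C(p,k)=0$ for $p<k$ and the diagonal entry $C(k,k)=a_1^{k-2}b_2$.

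The substantive step is extracting the constraints $b_2=a_1^2$ and $a_2=0$, which uses the extra brackets $[e_2,e_j]=(j-2)e_{j+2}$. I will compare two computations of $\nabla(e_5)$:
\[
\nabla(e_5)=\tfrac13[\nabla(e_1),\nabla(e_4)]\quad\text{and}\quad \nabla(e_5)=[\nabla(e_2),\nabla(e_3)],
\]
where the second uses $[e_2,e_3]=e_5$. The $e_5$-coefficients give $a_1^3b_2=a_1b_2^2$, hence $b_2=a_1^2$. The $e_6$-coefficients contain a term linear in $a_2$ with a nonzero numerical factor coming from the $W^+$ structure constants, and this forces $a_2=0$. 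Here $n\ge6$ is needed, and it is available since the table has $n\ge12$. With these, the diagonal becomes $C(k,k)=a_1^{k-2}b_2=a_1^k$, so in particular $\nabla(e_n)=a_1^ne_n$.

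For the converse, I must show that every choice of $a_1\ne0$ and arbitrary $a_i,b_j$ ($i,j\ge3$) actually gives an automorphism, i.e.\ that the remaining relations $\nabla([e_i,e_j])=(j-i)[\nabla(e_i),\nabla(e_j)]$ impose nothing further. I expect this to be the main obstacle, because $W^+(n)$ is not generated by $e_1,e_2$ freely and higher compatibility conditions could a priori constrain the $b_j$. I would first try to show the resulting map is an automorphism by factoring it. The diagonal map $e_k\mapsto a_1^ke_k$ is an automorphism since the grading is compatible. The remaining unipotent part should be a product of exponentials $\exp(\operatorname{ad}(te_s))$, $s\ge3$, together with the unipotent automorphisms that exist for truncated Witt algebras. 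A parameter count should then confirm that the $a_i,b_j$ are free. If the count falls short, the proof must record the additional relations on the $b_j$, found by comparing the two expressions for $\nabla(e_6)$ and the higher $\nabla(e_k)$, in the same way as the $e_7$-component condition for $\mathfrak g_{7,\alpha}$.
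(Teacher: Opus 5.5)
\textbf{The gap is the converse: it is false, not just hard.} Your necessity argument is sound. It matches all the paper offers, since the paper gives no separate proof and only says $W^+(n)$ is handled ``similarly'' to $\mathfrak{m}_2(n)$, whose argument also proves only necessity. That argument consists of $b_1=0$, the recursion $\nabla(e_k)=\frac{1}{k-2}[\nabla(e_1),\nabla(e_{k-1})]$, and the $e_5$- and $e_6$-components of $\operatorname{ad}(\nabla e_1)^3\nabla e_2=6[\nabla e_2,[\nabla e_1,\nabla e_2]]$, which give $b_2=a_1^2$ and $a_2=0$. You are right that $a_2=0$ needs $n\ge 6$; for $n=5$ it fails. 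Your fallback ``if the count falls short'' is what actually happens: the ``if'' direction is false for every $n\ge 7$, so for the whole range $n\ge 12$ of Table~1.

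The next component of your own comparison already shows this. With $a_2=0$ and $b_2=a_1^2$, the $e_7$-coefficient of $\frac13[\nabla e_1,\nabla e_4]$ is $10a_1^3b_4-3a_1^4a_3$. The $e_7$-coefficient of $[\nabla e_2,\nabla e_3]$ is $8a_1^3b_4-3a_1^4a_3+2a_1b_3^2$. Equating them forces $a_1^2b_4=b_3^2$, which the statement omits. Concretely, take $a_1=1$, $b_4=t\neq 0$ and all other $a_i,b_j$ zero. The stated recursion gives the bijection $\nabla(e_k)=e_k+\binom{k}{2}te_{k+2}$. Then $\nabla(e_5)=e_5+10te_7$, whereas $[\nabla(e_2),\nabla(e_3)]=[e_2+te_4,\,e_3+3te_5]=e_5+8te_7+3t^2e_9$, although $[e_2,e_3]=e_5$. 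So this map has the stated form but is not an automorphism.

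\textbf{The parameter count confirms this.} Linearize the degree-5 relation at a derivation with $De_1=\alpha e_{d+1}$ and $De_2=\beta e_{d+2}$. This gives $(d^2+d+6)\bigl((2-d)\alpha+(d-1)\beta\bigr)=0$ whenever $d\le n-5$, so in each degree $1\le d\le n-5$ only $\operatorname{ad}(e_d)$ survives. The presentation of $W^+(n)$ on $e_1,e_2$ has no relations of degree below $5$, so in degrees $n-4,n-3,n-2$ every pair $(\alpha,\beta)$ gives a derivation. The non-inner derivations are therefore the grading derivation and one each in degrees $n-4,n-3,n-2$ (e.g.\ $e_1\mapsto 0$, $e_2\mapsto e_{d+2}$). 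Hence $\dim\operatorname{Aut}(W^+(n))=\dim\operatorname{Der}(W^+(n))=(n-1)+1+3=n+3$, while the claimed family injects $2n-3$ free parameters. In fact the $e_p$-components ($7\le p\le n$) of the degree-5 relation determine $b_4,\dots,b_{n-3}$ from the remaining parameters; the coefficient of $b_{p-3}$ there is $\frac{a_1^3}{6}(p-6)\bigl((p-5)^2+(p-5)+6\bigr)\neq 0$.

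So the factorization you sketch (grading torus, inner automorphisms, three outer unipotent families) reaches exactly $n+3$ dimensions, not $2n-3$. Your proposal can establish only the ``only if'' part. A correct characterization must add these $n-6$ relations.
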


\subsection{Automorphisms of the algebra \texorpdfstring{$m_{0,1}(2k+1)$}{m(2k+1)}}\label{subsec5}
First, we consider automorphisms of the algebra \(m_{0,1}(2k+1)\). For the algebra \(m_{0,1}(2k+1)\), we introduce the following Table~\eqref{tab:filiformn}.

For $x=\sum_{i=1}^n x_i e_i$, $y=\sum_{j=1}^n y_j e_j$,
the Lie bracket derived from Table~ \eqref{tab:filiformn} is
\begin{equation}\label{eq:bracketm(n)}
[x,y] = \sum_{j=2}^{n-1} (x_1y_j - x_jy_1) e_{j+1}
      + \sum_{l=2}^{(n-1)/2} (-1)^{l+1} (x_l y_{n-l} - x_{n-l} y_l) e_n .
\end{equation}
\text{This algebra differs from \(\mathfrak{m}_0(n)\) only by the coefficient of the last basis vector \(e_n\).}

$$[x,y] = [x,y]_{m_0(n)}
      + \sum_{l=2}^{(n-1)/2} (-1)^{l+1} (x_l y_{n-l} - x_{n-l} y_l) e_n .$$

For the algebra \(m_0(n)\), the expression for \(\nabla(e_t)\) was of the following form $\nabla(e_t) = a_1^{\,t-2}\sum_{j=2}^{\,n-t+2}b_j\,e_{j+t-2}$. Accordingly, for \(m_{0,1}(n)\) it takes the following form.

\begin{equation}\label{m01nnablaet}
    \nabla(e_t) = a_1^{\,t-2}\sum_{j=2}^{\,n-t+2}b_j\,e_{j+t-2}+ a_1^{\,t-3}\sum_{l=2}^{(n-1)/2} (-1)^{l+1} (a_l b_{n-t-l+3} - a_{n-l} b_{l-t+3}) e_n
\end{equation}
in here $a_m,b_m=0$ if $m\le 0 \quad or \quad m>n$

\begin{Th}\label{thm:mainm(2k+1)}
Let ${m}_{0,1}(n)$ with $k\ge3$.  A linear map $\nabla:{m}_{0,1}(n) \to {m}_{0,1}(n)$
is an automorphism if and only if there exist complex numbers
\[
a_i\in C,\quad  b_i\in C,
\]
such that
\[
\begin{aligned}
\nabla(e_1)&=\sum_{i=1}^{n} a_i e_i, \\[2mm]
\nabla(e_2)&=a_1^{2}e_2+\sum_{j=3}^{n} b_j e_j,\\[2mm]
\nabla(e_t) &= a_1^{\,t-2}\sum_{j=2}^{\,n-t+2}b_j\,e_{j+t-2}+ a_1^{\,t-3}\sum_{l=2}^{(n-1)/2} (-1)^{l+1} (a_l b_{n-t-l+3} - a_{n-l} b_{l-t+3}) e_n ,\\[2mm]
\nabla(e_n)&= a_1^n e_n, \\[2mm]
\end{aligned}
\]
\end{Th}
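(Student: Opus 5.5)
The plan mirrors the proof of Theorem~\ref{theorem m_2(n)}: first show that an automorphism has the stated shape (necessity), then check that every map of that shape is an automorphism (sufficiency).

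\textbf{Necessity.} By Lemma~\ref{lem:lower-triangular-unified} we have $b_1=0$ and $\nabla(e_k)\in\operatorname{span}\{e_k,\dots,e_n\}$ for $k\ge 2$. Invertibility then forces $a_1\neq0$ and $b_2\neq0$. As in the paper, every automorphism satisfies
\[
\nabla(e_t)=\operatorname{ad}(\nabla(e_1))^{t-2}(\nabla(e_2)).
\]
So the images of $e_1$ and $e_2$ determine $\nabla$, and the only constraints come from the extra relations $[e_l,e_{n-l}]=(-1)^{l+1}e_n$ for $2\le l\le k$. The key observation is that these brackets land only in the centre $\operatorname{span}\{e_n\}$ (Lemma~\ref{lem:LCS-unified}). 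Hence they do not change the components of $\nabla(e_t)$ along $e_t,\dots,e_{n-1}$. Those components are exactly the $\mathfrak{m}_0(n)$ ones from Theorem~\ref{Theoremm0n}, namely $a_1^{t-2}\sum_j b_je_{j+t-2}$. The second sum in \eqref{m01nnablaet} records the $e_n$-correction. I would prove that correction by induction on $t$, using the bracket formula \eqref{eq:bracketm(n)}: at each step, $[\nabla(e_1),\nabla(e_t)]$ picks up the central contribution $\sum_l(-1)^{l+1}(x_ly_{n-l}-x_{n-l}y_l)$, with $x$ the coefficients of $\nabla(e_1)$ and $y$ those of $\nabla(e_t)$.

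\textbf{The relation $b_2=a_1^2$.} This is the main obstacle, since here $\mathfrak{m}_{0,1}(n)$ differs from $\mathfrak{m}_0(n)$. I would apply $\nabla$ to the relation $[e_2,e_{n-2}]=-e_n$, which is the case $l=2$, sign $(-1)^{3}$. Only the leading terms survive: $\nabla(e_2)=b_2e_2+\dots$ and $\nabla(e_{n-2})=a_1^{n-4}b_2e_{n-2}+\dots$. The $e_1$-component of $\nabla(e_2)$ is $b_1=0$, so the $e_1$-contributions vanish. The remaining $e_n$-coefficient of $[\nabla(e_2),\nabla(e_{n-2})]$ is $-a_1^{n-4}b_2^2$, up to terms from $[e_l,e_{n-l}]$ with $l\ge3$. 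Those vanish, because $\nabla(e_2)$ has no $e_1$-term and the indices of the remaining pairs are too large to sum to $n$ with a nonzero coefficient, except through the leading terms. On the other side, $\nabla(-e_n)=-a_1^{n-2}b_2e_n$, using the diagonal computed above. Comparing gives $b_2^2=a_1^2b_2$, so $b_2=a_1^2$, and then $\nabla(e_n)=a_1^n e_n$. I would also verify that the relations for $l\ge3$ add no further constraints on the $a_i$ and $b_j$. The heuristic is that their images also only produce $e_n$-terms, whose coefficients reduce to $a_1^{n}$ times the correct sign once $b_2=a_1^2$. The lower-order terms cancel by antisymmetry, because the corrections in \eqref{m01nnablaet} were defined consistently with these relations. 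This bookkeeping of signs $(-1)^{l+1}$ is where I expect most of the effort.

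\textbf{Sufficiency.} Given arbitrary $a_1\neq0$, $a_i$ and $b_j$, define $\nabla$ by the stated formulas. It is then enough to check $\nabla[e_i,e_j]=[\nabla e_i,\nabla e_j]$ on the generating relations $[e_1,e_t]=e_{t+1}$ and $[e_l,e_{n-l}]=(-1)^{l+1}e_n$. The first holds by construction of the recursion. The second is a purely central computation, in which only the diagonal entries contribute, giving $a_1^{l}a_1^{n-l}=a_1^n$. The matrix is invertible since its diagonal entries are the powers $a_1^t$.
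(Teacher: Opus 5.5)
\textbf{Verdict: the necessity half of your proposal is correct, but the sufficiency half has a genuine gap that cannot be repaired, because the ``if'' direction of the statement is false.}

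Your necessity argument is sound. The recursion $\nabla(e_t)=\operatorname{ad}(\nabla(e_1))^{t-2}\nabla(e_2)$, the observation that the extra brackets are central, and the derivation of $b_2=a_1^2$ from $[e_2,e_{n-2}]=-e_n$ are all correct. This is the route the paper itself implicitly takes; it gives no proof of this theorem beyond the recursion formula.

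\textbf{Where the sufficiency argument fails.} It is not enough to check $\nabla[e_i,e_j]=[\nabla e_i,\nabla e_j]$ on the nonzero relations $[e_1,e_t]=e_{t+1}$ and $[e_l,e_{n-l}]=(-1)^{l+1}e_n$. You must also check the vanishing brackets $[e_i,e_j]=0$ for $2\le i<j$ with $i+j<n$. These are not automatic: $\nabla(e_i)$ and $\nabla(e_j)$ have higher-degree components whose bracket lands in $e_n$.

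\textbf{Counterexample.} Take $a_1=1$, $b_2=b_3=1$, and all other parameters $0$. The stated formulas give $\nabla(e_1)=e_1$, $\nabla(e_t)=e_t+e_{t+1}$ for $2\le t\le n-1$, and $\nabla(e_n)=e_n$. Then
\[
[\nabla(e_2),\nabla(e_{n-4})]=[e_2+e_3,\;e_{n-4}+e_{n-3}]=[e_3,e_{n-3}]=e_n\neq 0=\nabla([e_2,e_{n-4}]).
\]
For $n=7$ this reads $[e_2+e_3,e_3+e_4]=e_7$. So this bijective map of the stated form is not an automorphism. For the same reason, your claim in the necessity part that ``the only constraints come from the extra relations'' is wrong.

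\textbf{The missing constraints in general.} For $i,j\ge 2$, the $e_n$-coefficient of $[\nabla(e_i),\nabla(e_j)]$ equals
\[
(-1)^{i}a_1^{\,i+j-4}\sum_{p=2}^{s-2}(-1)^{p+1}b_p\,b_{s-p},\qquad s=n+4-i-j .
\]
\begin{itemize}
\item For $s=4$ (that is, $i+j=n$) this is your condition $b_2=a_1^2$.
\item For odd $s$ (that is, $i+j<n$ even) the terms $p$ and $s-p$ cancel, so nothing new arises.
\item For even $s$ with $6\le s\le n-1$ (that is, $i+j<n$ odd) the bracket must vanish. This gives the genuine conditions
\[
\sum_{p=2}^{s-2}(-1)^{p+1}b_p\,b_{s-p}=0,\qquad s=6,8,\dots,n-1 .
\]
The first of these is $b_3^2=2a_1^2b_4$.
\end{itemize}
These are $k-2$ quadratic relations. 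Since $b_2=a_1^2\neq 0$, they can be solved for $b_4,b_6,\dots,b_{n-3}$.

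The characterization of $\operatorname{Aut}(\mathfrak{m}_{0,1}(n))$ becomes correct only once these relations are added to the statement, with the zero brackets checked as above. As stated, your argument establishes only that every automorphism has the displayed form.
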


\subsection{Automorphisms of the algebra \texorpdfstring{$m_{0,2}(2k+2)$}{m(2k+2)}}\label{subsec6}
we consider automorphisms of the algebra \(m_{0,2}(2k+2)\). For the algebra \(m_{0,2}(n)\) (in here $n=2k+2$), we introduce the following relations by Table~\eqref{tab:filiformn}.

For \(x=\sum_{i=1}^n x_i e_i\) and \(y=\sum_{j=1}^n y_j e_j\), the Lie bracket
derived from  is \(m_{0,2}(n)\)
\begin{equation}\label{eq:bracketm(2k+2)}
\begin{aligned}
\relax[x,y] &= \sum_{j=2}^{n-1} (x_1y_j - x_jy_1)\, e_{j+1} \\
      &+ \sum_{l=2}^{\frac{n}{2}-1} (-1)^{l+1} (x_l y_{n-1-l} - x_{n-1-l} y_l)\, e_{n-1} \\
      &+ \sum_{j=2}^{\frac{n}{2}-1} (-1)^{j+1} (\frac{n}{2}-j) (x_j y_{n-j} - x_{n-j} y_j)\, e_n .
\end{aligned}
\end{equation}

In the general situation, the algebra under consideration is obtained from \(m_{0,1}(n)\) by the substitution \(n' = n+1\), i.e. it is precisely the algebra \(m_{0,2}(n')\). Consequently, the automorphism matrix coincides with that of \(m_{0,1}(n)\) in the first \(n-1\) rows, and the only difference occurs in the last row, which is governed by the bracket
\[
[e_p, e_{n-p}] = (-1)^{p+1}\left(\frac{n}{2}-p\right)e_{n}, \qquad 2\le p \le \frac{n}{2}-1.
\]

so we compute $\nabla(e_t)=[\nabla(e_1),\nabla(e_{t-1})]$

\begin{equation}\label{m02nnablaet}
\begin{aligned}
    \nabla(e_t) &= a_1^{\,t-2}\sum_{j=2}^{\,n-t+2}b_j\,e_{j+t-2}+ a_1^{\,t-3}\sum_{l=2}^{(n-1)/2} (-1)^{l+1} (a_l b_{n-t-l+2} - a_{n-l-1} b_{l-t+3}) e_{n-1}+ \\ &  a_1^{\,t-3}\sum_{p=2}^{(n-1)/2} (-1)^{p+1}(\frac{n}{2}-p)  (a_p b_{n-p-t+3} - a_{n-p} b_{p-t+3}) e_{n}
\end{aligned}
\end{equation}
in here $a_m,b_m=0$ if $m\le 0$ or $m>n$

\begin{Th}\label{thm:autm(2k+2)}
Let $m_{0,2}(n)$ be the Lie algebra.
A linear map
\[
\nabla:m_{0,2}(n)\longrightarrow m_{0,2}(n)
\]
is an automorphism if and only if there exist complex numbers
\[
a_1\neq0,\qquad
a_i,b_i,\in C
\]
such that
\[
\begin{aligned}
\nabla(e_1)&=a_1e_1+\sum_{i=3}^{n}a_i e_i,\\[2mm]
\nabla(e_2)&=a_1^2e_2+\sum_{j=3}^{n}b_je_j,\\[2mm]
\nabla(e_t) &= a_1^{\,t-2}\sum_{j=2}^{\,n-t+2}b_j\,e_{j+t-2}+ a_1^{\,t-3}\sum_{l=2}^{(n-1)/2} (-1)^{l+1} (a_l b_{n-t-l+2} - a_{n-l-1} b_{l-t+3}) e_{n-1}+ \\ &  a_1^{\,t-3}\sum_{p=2}^{(n-1)/2} (-1)^{p+1}(\frac{n}{2}-p)  (a_p b_{n-p-t+3} - a_{n-p} b_{p-t+3}) e_{n} \\[2mm]
\nabla(e_{n})&=a_1^{\,n}e_{n},\\[2mm]
\end{aligned}
\]
\end{Th}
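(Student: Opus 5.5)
We follow the same scheme used for $\mathfrak{m}_2(n)$ and $\mathfrak{m}_{0,1}(n)$ in Theorems~\ref{theorem m_2(n)} and~\ref{thm:mainm(2k+1)}. We write
\[
\nabla(e_1)=\sum_{i=1}^n a_ie_i,\qquad \nabla(e_2)=\sum_{i=1}^n b_ie_i .
\]
By Lemma~\ref{lem:lower-triangular-unified}, $b_1=0$, and bijectivity forces $a_1\neq0$ and $b_2\neq0$. Since $e_t=[e_1,e_{t-1}]$ for $3\le t\le n$, every automorphism satisfies $\nabla(e_t)=\operatorname{ad}(\nabla(e_1))^{t-2}(\nabla(e_2))$. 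Hence $\nabla$ is determined by the pair $(\nabla(e_1),\nabla(e_2))$. The proof then splits into two tasks:
\begin{itemize}
\item derive the constraints this pair must satisfy (necessity);
\item compute the resulting columns and check that every such pair really gives an automorphism (sufficiency).
\end{itemize}

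\textbf{Constraints.} We use the relations that do not come from $\operatorname{ad}e_1$, namely
\[
[e_l,e_{n-1-l}]=(-1)^{l+1}e_{n-1},\qquad [e_j,e_{n-j}]=(-1)^{j+1}\Bigl(\tfrac n2-j\Bigr)e_n .
\]
First we apply $\nabla$ to $[e_2,e_{n-3}]=-e_{n-1}$ and compare with $\nabla(e_{n-1})=\operatorname{ad}(\nabla(e_1))^{n-3}(\nabla(e_2))$, looking at the $e_{n-1}$-coefficient. Lemma~\ref{lem:nablae_k} gives diagonal entries $a_1^{t-2}b_2$ on the $\operatorname{ad}$ side. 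On the bracket side, the $e_{n-1}$-coefficient is $b_2\cdot a_1^{n-5}b_2$ plus terms involving $a_2$. Comparing them gives $b_2^2a_1^{n-5}=a_1^{n-3}b_2$, hence $b_2=a_1^2$, provided $a_2$ has already been eliminated. Second, we eliminate $a_2$ by comparing the $e_n$-coefficients, or the next subdiagonal entry, of the same identity. Mirroring the $\mathfrak{m}_2$ computation, we expect a term $a_2b_2$ times a nonzero structure constant that must vanish. If one identity does not isolate $a_2$, we use the pair $[e_2,e_{n-2}]=-(\frac n2-2)e_n$ against $\nabla(e_n)=[\nabla(e_1),\nabla(e_{n-1})]$. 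The requirement $k\ge3$ guarantees that $\frac n2-2\neq0$ and that the low-index brackets are nontrivial.

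\textbf{Explicit columns.} With $a_2=0$ and $b_2=a_1^2$, we compute $\nabla(e_t)$ by induction on $t$, exactly as in the proof of Theorem~\ref{Theoremm0n}. The $\operatorname{ad}(a_1e_1)$ part produces the shifted sum $a_1^{t-2}\sum_j b_je_{j+t-2}$. The only extra contributions come from brackets $[a_le_l,\cdot]$ and $[a_{n-l-1}e_{n-l-1},\cdot]$ landing in $e_{n-1}$, and analogously in $e_n$. Collecting them gives the two correction sums in \eqref{m02nnablaet}. These terms appear only once a component reaches the appropriate complementary index, which accounts for the power $a_1^{t-3}$ and the index shifts. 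Setting $t=n$ gives $\nabla(e_n)=a_1^{n-2}b_2e_n=a_1^ne_n$.

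\textbf{Sufficiency and main obstacle.} We must check that the linear map defined by these formulas preserves all brackets $[e_i,e_j]$. Brackets involving $e_1$ hold by construction. For $i,j\ge2$ with $i+j\in\{n-1,n\}$, we verify the identity using the grading. The leading terms give $a_1^{i+j}$ times the structure constant, matching $a_1^{n-1}$ or $a_1^n$ on the diagonal. The lower-order terms land in $\operatorname{span}\{e_{n-1},e_n\}$ and must match the correction sums. We expect this verification to be the main obstacle: the coefficients $(\frac n2-p)$ and the alternating signs have to telescope across the sums. We would first try to deduce it from the Jacobi identity instead of checking it directly. The idea is that $\phi=\exp$-type maps together with the diagonal map $e_i\mapsto a_1^ie_i$ generate the group. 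The diagonal map is an automorphism because it comes from the grading derivation, and the unipotent part is parametrised freely by $(a_i)_{i\ge3}$ and $(b_j)_{j\ge3}$. A dimension count would then show that the displayed family exhausts the automorphism group.
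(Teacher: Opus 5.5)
\textbf{Verdict: there is a genuine gap.} The statement as written is false in both directions, so neither your plan nor the paper's can prove it. The paper's own argument (analogy with $\mathfrak m_{0,1}(n)$ plus the recursion $\nabla(e_t)=[\nabla(e_1),\nabla(e_{t-1})]$) never checks sufficiency, so it does not close the gap either.

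\textbf{What works.} Your derivation of the constraints is sound in outline. You have $b_1=0$. The $e_{n-1}$-coefficient of $[\nabla(e_2),\nabla(e_{n-3})]=-\nabla(e_{n-1})$ gives $b_2=a_1^2$ directly; no $a_2$ enters that coefficient. After factoring out the grading automorphism $e_i\mapsto a_1^ie_i$, the $e_n$-coefficient of the same identity is exactly the degree-one derivation condition. There the $b_3$-terms cancel, leaving $(\tfrac n2-1)a_2=0$, hence $a_2=0$.

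\textbf{Sufficiency fails.} You only check pairs $e_i,e_j$ ($i,j\ge2$) with $i+j\in\{n-1,n\}$. For $i+j\le n-2$ one has $[e_i,e_j]=0$, but the higher components of $\nabla(e_i),\nabla(e_j)$ can bracket into $e_{n-1}$ and $e_n$. Take $a_1=1$ and $a_i=0$ for $i\ge3$, so $b_2=1$. The displayed formula then gives $\nabla(e_t)=\sum_j b_je_{j+t-2}$, and
\[
[\nabla(e_2),\nabla(e_{n-5})]=(b_3^2-2b_4)\,e_{n-1}+(b_3b_4-3b_5)\,e_n .
\]
With $b_4=1$ and all other $b_j=0$ ($j\ge3$), this equals $-2e_{n-1}\neq0=\nabla([e_2,e_{n-5}])$. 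The same obstruction shows up infinitesimally. Let $D$ be a derivation of degree $s$ with $2\le s\le n-5$ and $D(e_2)=\beta e_{2+s}$. Then:
\begin{itemize}
\item the $e_{n-1}$-form forces $\beta\bigl((-1)^s+1\bigr)=0$;
\item for odd $s$, the $e_n$-form forces $s\beta=0$, because that form is $\operatorname{ad}e_1$-invariant only modulo the $e_{n-1}$-form.
\end{itemize}
Hence $b_4,\dots,b_{n-3}$ are not free, and the unipotent part is not freely parametrised by $(a_i)_{i\ge3},(b_j)_{j\ge3}$. The dimension count you propose would in fact give $\dim\operatorname{Der}=n+3$, not $2n-3$.

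\textbf{The explicit columns are also wrong.} In $\nabla(e_t)=[\nabla(e_1),\nabla(e_{t-1})]$, the $e_n$-coefficient also receives $a_1$ times the $e_{n-1}$-correction of $\nabla(e_{t-1})$, and the displayed formula omits this term. For example, $D(e_1)=e_3$, $D(e_2)=0$ extends to a derivation. The automorphism $\exp D$ has $\nabla(e_1)=e_1+e_3$, $\nabla(e_2)=e_2$, $\nabla(e_{n-3})=e_{n-3}+e_{n-1}$ and $\nabla(e_{n-2})=e_{n-2}+(\tfrac n2-2)e_n$. The formula instead predicts the coefficient $(\tfrac n2-3)$ for $e_n$, so the ``only if'' direction is wrong as written too.

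\textbf{What can be proved.} Your approach does establish $b_1=a_2=0$, $b_2=a_1^2$ and $\nabla(e_t)=\operatorname{ad}(\nabla(e_1))^{t-2}(\nabla(e_2))$. On top of that there are additional polynomial relations that determine $b_4,\dots,b_{n-3}$. The statement has to be corrected along these lines before any proof can go through.
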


\subsection{Automorphisms of the algebra \texorpdfstring{$m_{0,3}(2k+3)$}{m(2k+3)}}\label{subsec7}
we consider automorphisms of the algebra \(m_{0,3}(2k+3)\). For the algebra \(m_{0,3}(2k+3)\), we introduce the following relations by Table~\eqref{tab:filiformn}.

In the general situation, the algebra under consideration is obtained from \(m_{0,2}(n)\) by the substitution \(n' = n+1\), i.e. it is precisely the algebra \(m_{0,3}(n')\). Consequently, the automorphism matrix coincides with that of \(m_{0,2}(n)\) in the first \(n-1\) rows, and the only difference occurs in the last row, which is governed by the bracket
$$[e_m, e_{2k-m+3}] = (-1)^m\left((m-2)k - \frac{(m-2)(m-1)}{2}\right)e_{2k+3},\ m = 3, \ldots, k+1 \label{eq:defm(2k+3)}$$

For $x=\sum_{i=1}^n x_i e_i$, $y=\sum_{j=1}^n y_j e_j$ in $m_{0,3}(n)$ , the Lie bracket
derived from \eqref{eq:defm(2k+3)} is
\begin{equation}\label{eq:bracketm(2k+3)}
\begin{aligned}
\relax [x,y] &= [x,y]_{m_{0,2}(n')} + \sum_{m=3}^{k+1} (-1)^m \Bigl((m-2)k - \frac{(m-2)(m-1)}{2}\Bigr)
         (x_m y_{2k+3-m} - x_{2k+3-m} y_m)\, e_{2k+3} .
\end{aligned}
\end{equation}

\begin{Th}\label{m03nmain}
    Let $m_{0,3}(n)$ be the Lie algebra
A linear map
\[
\nabla:m_{0,3}(n)\longrightarrow m_{0,3}(n)
\]
is an automorphism if and only if there exist complex numbers

\[
a_1\neq0,\qquad
a_i,b_i,\in C
\]

such that

\[
\begin{aligned}
\nabla(e_1)&=a_1e_1+\sum_{i=3}^{n}a_i e_i,\\[2mm]
\nabla(e_2)&=a_1^2e_2+\sum_{j=3}^{n}b_je_j,\\[2mm]
\nabla(e_t) &= a_1^{\,t-2}\sum_{j=2}^{\,n-t+2}b_j\,e_{j+t-2}+ a_1^{\,t-3}\sum_{l=2}^{(n-3)/2} (-1)^{l+1} (a_l b_{n-t-l+1} - a_{n-l-2} b_{l-t+3}) e_{n-2}+ \\ &  a_1^{\,t-3}\sum_{p=2}^{(n-3)/2} (-1)^{p+1}(\frac{n-1}{2}-p)  (a_p b_{n-p-t+2} - a_{n-p} b_{p-t+2}) e_{n-1}+ \\ &  a_1^{\,t-3}\sum_{m=2}^{(n-3)/2} (-1)^{m+1}(m  \frac{n-3}{2}-\frac{(m-2)(m-1)}{2})  (a_m b_{n-m -t+3} - a_{n-m} b_{m-t+3}) e_{n} \\[2mm]
\nabla(e_{n})&=a_1^{\,n}e_{n},\\[2mm]
\end{aligned}
\]
in here $a_k,b_k=0$ if $k\le 0 \quad or \quad k>n$.
\end{Th}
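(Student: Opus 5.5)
We follow the scheme used for $\mathfrak{m}_2(n)$ and $\mathfrak{m}_{0,2}(n)$. By Lemma~\ref{lem:lower-triangular-unified} we write $\nabla(e_1)=\sum_{i=1}^n a_ie_i$ and $\nabla(e_2)=\sum_{j=2}^n b_je_j$, with $a_1\neq0$ and $b_2\neq0$ by invertibility. The bracket \eqref{eq:bracketm(2k+3)} splits as $[x,y]_{\mathfrak{m}_0(n)}$ plus three ``boundary'' components landing only in $e_{n-2}=e_{2k+1}$, $e_{n-1}=e_{2k+2}$, and $e_n=e_{2k+3}$. Since every $e_t$ with $t\ge3$ is $\operatorname{ad}(e_1)^{t-2}e_2$, the automorphism is forced to satisfy $\nabla(e_t)=[\nabla(e_1),\nabla(e_{t-1})]$. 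So the whole proof consists of three parts: determine the constraints on $(a_i,b_j)$, compute the recursion explicitly, and check sufficiency.

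\textbf{Step 1: diagonal constraints.} The first step is to show $a_2=0$ and $b_2=a_1^2$. We use the extra relation $[e_2,e_{2k-1}]=-e_{2k+1}$ (the case $l=2$ of the $m_{0,1}$-type brackets) together with $e_{2k+1}=\operatorname{ad}(e_1)^{2k-1}e_2$. Comparing the $e_{2k+1}$-coefficient of $\nabla(e_{2k+1})=[\nabla(e_1),\nabla(e_{2k})]$ with that of $-[\nabla(e_2),\nabla(e_{2k-1})]$ gives $a_1^{2k-1}b_2=a_1^{2k-3}b_2^2$, hence $b_2=a_1^2$. The next coefficient, or equivalently the relation $[\nabla(e_2),\nabla(e_{2k})]$ compared via the $e_{2k+2}$ bracket with coefficient $-(k-1)$, forces $a_2=0$. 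This works because $a_2$ enters $\nabla(e_t)$ at the first subdiagonal through terms $a_2b_2$ that are not matched on the other side.

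\textbf{Step 2: explicit recursion.} With $a_2=0$, we compute $\nabla(e_t)$ by induction on $t$ exactly as for $\mathfrak{m}_0(n)$. The $\mathfrak{m}_0$-part yields $a_1^{t-2}\sum_j b_je_{j+t-2}$. The only additional contributions come from brackets of the non-$e_1$ components of $\nabla(e_1)$ with $\nabla(e_{t-1})$, and these land in $e_{n-2},e_{n-1},e_n$ with the three families of structure constants. Because these three vectors lie in the last three terms of the lower central series (Lemma~\ref{lem:LCS-unified}), they are killed by further brackets with everything except $e_1$, which shifts them upward. This accounts for the index shifts $b_{n-l-t+1}$, $b_{n-p-t+2}$, $b_{n-m-t+3}$ in the statement and for the common factor $a_1^{t-3}$. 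We obtain $\nabla(e_n)=a_1^{n-2}b_2e_n=a_1^ne_n$.

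\textbf{Step 3: sufficiency, the main obstacle.} Conversely, we must verify that every map of the stated form preserves all brackets, in particular $[e_l,e_{2k-l+1}]$, $[e_j,e_{2k-j+2}]$, and $[e_m,e_{2k-m+3}]$ for all admissible indices. Rather than checking each bracket directly, we plan to use that $\mathfrak{m}_{0,3}(n)$ is generated by $e_1,e_2$ and that, via the grading derivation, the map is a composition of the diagonal automorphism $e_i\mapsto a_1^ie_i$ with a unipotent map. Compatibility then reduces to the Jacobi-type identities already built into Bernik's structure constants. Concretely, we would check that the quadratic conditions coming from the top three rows are identities in the $a_i,b_j$. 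We expect this to follow from the skew-pairing pattern $(-1)^{l+1}$ and the relations $c_{m}$ among the coefficients $(k-j+1)$ and $(m-2)k-\binom{m-1}{2}$, which are precisely the cocycle conditions defining $\mathfrak{m}_{0,3}$.

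The delicate point is whether any additional relation among the $b_j$ near the top is forced, as happened with $b_3^2=2a_1^2b_4$ for $\mathfrak{g}_{7,\alpha}$. We would test this first on $k=3$ ($n=9$) by direct computation before writing the general argument.
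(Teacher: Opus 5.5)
\textbf{Gap: Step 3 cannot be completed, because the ``if'' direction of the statement is false.} Your Steps 1 and 2 follow the paper's route. The paper gives no separate argument for $\mathfrak{m}_{0,3}$: it transfers the $\mathfrak{m}_{0,2}$ recursion $\nabla(e_t)=[\nabla(e_1),\nabla(e_{t-1})]$ and never checks the remaining brackets. Step 3, which you call the main obstacle, is exactly where this breaks.

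Transporting the relations $[e_l,e_j]=0$ with $l,j\ge 2$ and $l+j\le 2k$ gives genuine polynomial constraints, not identities. Since $[e_i,e_j]=0$ for $i,j\ge 2$ unless $i+j\ge 2k+1$, the recursion gives $[\nabla(e_t)]_j=a_1^{t-2}b_{j-t+2}$ in all degrees $j\le 2k$. Now use $[e_2,e_{2k-3}]=0$. The $e_{2k+1}$-coefficient of $[\nabla(e_2),\nabla(e_{2k-3})]$ is
\[
a_1^{2k-5}\bigl(-b_2b_4+b_3^2-b_4b_2\bigr)=a_1^{2k-5}\bigl(b_3^2-2a_1^2b_4\bigr),
\]
so $b_3^2=2a_1^2b_4$ is forced, exactly as in $\mathfrak{g}_{7,\alpha}$.

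\textbf{Explicit counterexample.} Take $a_1=1$, $b_3=1$ and all other parameters $0$. This is of the stated form, with $\nabla(e_1)=e_1$, $\nabla(e_t)=e_t+e_{t+1}$ for $2\le t\le n-1$, and $\nabla(e_n)=e_n$. But $[\nabla(e_2),\nabla(e_{2k-3})]=[e_3,e_{2k-2}]=e_{2k+1}\neq 0=\nabla([e_2,e_{2k-3}])$.

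This is not an isolated failure. For every even $s$ with $2\le s\le 2k-4$, a homogeneous degree-$s$ derivation must have $d(e_2)=0$. Hence $\dim\operatorname{Aut}$ is strictly smaller than the $2n-3$ free parameters the statement allows. Your planned $k=3$ test would expose this immediately: $[\nabla(e_2),\nabla(e_3)]$ has $e_7$-coefficient $a_1(b_3^2-2a_1^2b_4)$. No appeal to Bernik's cocycle conditions can rescue Step 3; the theorem has to be amended with these constraints.

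\textbf{Step 2 also cannot land exactly on the displayed formula.} Its $e_n$-coefficient is $(-1)^{m+1}\bigl(mk-\tfrac{(m-2)(m-1)}{2}\bigr)$ for $2\le m\le k$. This disagrees with the Table~1 constants $(-1)^m\bigl((m-2)k-\tfrac{(m-2)(m-1)}{2}\bigr)$ for $3\le m\le k+1$ in sign, in the factor $m$ versus $m-2$, and in range. In addition, the upward $a_1$-shift you describe puts $a_1$ times the $e_{n-2}$-boundary part of $\nabla(e_{t-1})$ into the $e_{n-1}$-coefficient of $\nabla(e_t)$. The displayed formula omits this term.

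\textbf{A smaller point in Step 1.} Your reason for $a_2=0$ is off: $a_2$ does not enter on the first subdiagonal here, since among $e_i$ with $i\ge 2$, $e_2$ brackets nontrivially only with $e_{2k-1}$ and $e_{2k}$. The clean computation is the $e_{2k+2}$-component of $\nabla([e_2,e_{2k-1}])=[\nabla(e_2),\nabla(e_{2k-1})]$. Using $b_2=a_1^2$, it reduces to $k\,a_1^{2k}a_2=0$.
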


\bigskip

\section{Local automorphisms }\label{sec4}

\begin{defn}\label{defnlocal}
A linear map $\nabla:g\to g$ is a \emph{local automorphism} if for every $x\in g$ there exists an automorphism $\Phi_x$ such that $\nabla(x)=\Phi_x(x)$.
\end{defn}

Let $B=(b_{i,j})$ be the matrix of $\nabla$ with respect to $\{e_1,\dots,e_n\}$. For $x=\sum_{i=1}^n x_i e_i$, write $\bar{x}=(x_1,\dots,x_n)^T$. If $\Phi_x$ has matrix $A_x$, then the local condition is equivalent to the vector equation
\begin{equation}\label{systemlocal}
    B\bar{x}=A_x\bar{x}.
\end{equation}

\begin{Lem}\label{lem:bijective}
Every local automorphism \( \nabla \) of \( \mathfrak{g} \) is bijective.
\end{Lem}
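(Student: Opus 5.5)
Since $\mathfrak g$ is finite-dimensional and $\nabla$ is linear by Definition~\ref{defnlocal}, bijectivity is equivalent to $\ker\nabla=\{0\}$, so the plan is to show that $\nabla(x)=0$ forces $x=0$. Fix $x\in\mathfrak g$ with $\nabla(x)=0$. By definition there is an automorphism $\Phi_x$ with $\nabla(x)=\Phi_x(x)$, so $\Phi_x(x)=0$. An automorphism is in particular an injective linear map, hence $x=0$. Therefore $\ker\nabla=\{0\}$, $\nabla$ is injective, and by the rank--nullity theorem on the $n$-dimensional space $\mathfrak g$ it is surjective as well.

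In matrix language, which fits the paper's later use of \eqref{systemlocal}, the same argument reads as follows. If $B\bar x=0$, then $A_x\bar x=0$. Each $A_x$ is the matrix of an automorphism, so it has one of the lower triangular forms obtained in Section~\ref{sec3}, for example \eqref{matrix gk} or the matrices for $\mathfrak m_0(n)$ and $\mathfrak m_2(n)$. These matrices have non-zero diagonal entries $a_1^k$ or $a_1^{k-2}b_2$ with $a_1,b_2\neq0$, so $\det A_x\neq0$ and $\bar x=0$. Hence $\det B\neq 0$.

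It is tempting to argue that $B$ is lower triangular with diagonal entries $B_{kk}$ obtained by evaluating at $x=e_k$. That route would be harder and is unnecessary, because the kernel argument works pointwise and needs no structure of $B$.

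No real obstacle is expected here. The only point to state explicitly is that linearity of $\nabla$ is part of the definition of a local automorphism. This is what allows injectivity to follow from a trivial kernel; for the non-linear 2-local maps treated later, the argument would not go through in this form.
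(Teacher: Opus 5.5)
Your argument is correct and is exactly the paper's proof: $\nabla(x)=0$ gives $\Phi_x(x)=0$, the injectivity of the automorphism $\Phi_x$ forces $x=0$, and finite-dimensionality then turns injectivity of the linear map $\nabla$ into bijectivity. The matrix reformulation is a harmless restatement, and it does not need the explicit lower triangular forms from Section~\ref{sec3}, because the matrix of any automorphism is invertible by definition.
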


\begin{proof}
If \( \nabla(x)=0 \), then for an automorphism \( \Phi_x \) with \( \nabla(x)=\Phi_x(x) \), we have \( \Phi_x(x)=0 \), hence \( x=0 \). Thus \( \nabla \) is injective. Since \( \mathfrak{g} \) is finite-dimensional, \( \nabla \) is bijective.
\end{proof}

\begin{Lem}\label{lem:lcsinv}
Every local automorphism \( \nabla \) preserves the lower central series: \( \nabla(\mathfrak{g}^r) = \mathfrak{g}^r \) for all \( r \).
\end{Lem}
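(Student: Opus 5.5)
Since \( \nabla \) is linear but not known to respect brackets, the inclusion \( \nabla(\mathfrak{g}^r)\subseteq\mathfrak{g}^r \) will be proved elementwise, using the fact that every automorphism preserves each term of the lower central series. Then Lemma~\ref{lem:bijective} will upgrade the inclusion to equality.

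\textbf{Step 1: inclusion.} Fix \( r\ge 1 \) and \( x\in\mathfrak{g}^r \). By Definition~\ref{defnlocal} there is an automorphism \( \Phi_x \) with \( \nabla(x)=\Phi_x(x) \). In the proof of Lemma~\ref{lem:nablae_k} it was shown, by induction on \( r \), that every automorphism satisfies \( \Phi(\mathfrak{g}^r)=\mathfrak{g}^r \). Hence \( \nabla(x)=\Phi_x(x)\in\mathfrak{g}^r \), and so \( \nabla(\mathfrak{g}^r)\subseteq\mathfrak{g}^r \). The argument works pointwise, so it does not matter that the automorphism \( \Phi_x \) changes with \( x \).

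\textbf{Step 2: equality.} Because \( \nabla \) is linear, \( \nabla(\mathfrak{g}^r) \) is a subspace of \( \mathfrak{g}^r \). By Lemma~\ref{lem:bijective}, \( \nabla \) is injective, so
\[
\dim\nabla(\mathfrak{g}^r)=\dim\mathfrak{g}^r .
\]
A subspace of the same finite dimension must be the whole space, so \( \nabla(\mathfrak{g}^r)=\mathfrak{g}^r \). For \( r\ge n \) we have \( \mathfrak{g}^r=0 \), and the statement is trivial.

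\textbf{Main obstacle.} The only real subtlety is that \( \nabla \) is not a homomorphism, so we cannot push it through brackets the way the proof of Lemma~\ref{lem:nablae_k} does. Step 1 avoids this by never applying \( \nabla \) to a bracket, only to individual elements of \( \mathfrak{g}^r \).

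As a byproduct, Lemma~\ref{lem:LCS-unified} gives \( \mathfrak{g}^{k-1}=\operatorname{span}\{e_k,\dots,e_n\} \). Together with the lemma, this shows that the matrix \( B \) of \( \nabla \) in \eqref{systemlocal} is lower triangular from the second column onward, which is the form needed later.
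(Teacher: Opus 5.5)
Your proof is correct and follows essentially the paper's argument: for $x\in\mathfrak{g}^r$ you use $\nabla(x)=\Phi_x(x)\in\Phi_x(\mathfrak{g}^r)=\mathfrak{g}^r$ to get the inclusion, then upgrade it to equality via injectivity (Lemma~\ref{lem:bijective}) and a dimension count. One caveat concerns only your closing aside: since $\mathfrak{g}^1=\mathfrak{g}$, invariance of the lower central series forces the zero pattern only in columns $3,\dots,n$ of $B$, so the vanishing of the $(1,2)$ entry must come from applying Lemma~\ref{coeff b1} to $\Phi_{e_2}$ rather than from this lemma.
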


\begin{proof}
Let \( y \in \mathfrak{g}^r \). Then \( y = \sum_j [u_j, v_j] \) with \( u_j \in \mathfrak{g}^{r-1} \). For the local automorphism \( \nabla \), choose \( \Phi_y \) such that \( \nabla(y) = \Phi_y(y) \). Since \( \Phi_y \) is an automorphism, \( \Phi_y(\mathfrak{g}^r) = \mathfrak{g}^r \). Hence \( \nabla(y) = \Phi_y(y) \in \mathfrak{g}^r \). Thus \( \nabla(\mathfrak{g}^r) \subseteq \mathfrak{g}^r \). By Lemma~\eqref{lem:bijective}, \( \dim \nabla(\mathfrak{g}^r) = \dim \mathfrak{g}^r \), so \( \nabla(\mathfrak{g}^r) = \mathfrak{g}^r \).
\end{proof}

\subsection{Local automorphisms of the algebra \texorpdfstring{$g_{k,\alpha}$}{g(kn)}}\label{subsec8}

\begin{Th}\label{thm:local-gk}
A linear map $\nabla$ on the one-parameter family of Lie algebras $\mathfrak{g}_{k,\alpha}$ ($k \in \{7,8,9,10,11\}$) is a local automorphism if and only if its matrix representation $M_{\nabla} = (b_{m,j})$ with respect to the homogeneous basis $\{e_1, e_2, \dots, e_k\}$ is lower triangular.
\begin{equation}\label{eq:local-matrix-gk-form}
M_{\nabla} = \begin{pmatrix}
b_{1,1} & 0 & 0 & 0 & \cdots & 0 & 0 \\
0 & b_{2,2} & 0 & 0 & \cdots & 0 & 0 \\
b_{3,1} & b_{3,2} & b_{3,3} & 0 & \cdots & 0 & 0 \\
b_{4,1} & b_{4,2} & b_{4,3} & b_{4,4} & \cdots & 0 & 0 \\
\vdots & \vdots & \vdots & \vdots & \ddots & \vdots & 0 \\
b_{n-1,1} & b_{n-1,2} & b_{n-1,3} & b_{n-1,4} & \cdots & b_{n-1,n-1} & 0 \\
b_{n,1} & b_{n,2} & b_{n,3} & b_{n,4} & \cdots & b_{n,n-1} & b_{n,n}
\end{pmatrix},
\end{equation}
where $b_{m,j} \in \mathbb{C}$ and $d_{1,1} \neq 0$.
\end{Th}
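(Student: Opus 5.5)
The proof has two directions, and I would organize it around the vector equation \eqref{systemlocal}, $M_\nabla\bar x=A_x\bar x$, with $A_x$ taken from the matrix form \eqref{matrix gk} of Theorem~\ref{theorem gk aut}.

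\textbf{Necessity.} Let $\nabla$ be a local automorphism.
\begin{enumerate}
\item By Lemma~\ref{lem:bijective}, $\nabla$ is bijective. By Lemma~\ref{lem:lcsinv} together with Lemma~\ref{lem:LCS-unified}, $\nabla(\mathfrak g^{r})=\operatorname{span}\{e_{r+1},\dots,e_k\}$ for every $r\ge 2$. Hence $\nabla(e_j)\in\operatorname{span}\{e_j,\dots,e_k\}$ for $j\ge3$, so columns $3,\dots,k$ are lower triangular.
\item For columns $1$ and $2$ I would apply the local condition to single basis vectors. From $\nabla(e_2)=\Phi_{e_2}(e_2)$ and the second column of \eqref{matrix gk}, the $e_1$-coordinate of $\nabla(e_2)$ is $0$, which gives $b_{1,2}=0$.
\item From $\nabla(e_1)=\Phi_{e_1}(e_1)$ and the first column of \eqref{matrix gk}, the $e_2$-coordinate of $\nabla(e_1)$ is $0$, which gives $b_{2,1}=0$. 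The $e_1$-coordinate of $\nabla(e_1)$ is $a_1\ne0$, which gives $b_{1,1}\neq0$.
\item For each $j$, the diagonal entry equals $a_1^{\,j}$ for the automorphism $\Phi_{e_j}$, so $b_{j,j}\neq 0$; this is consistent with bijectivity.
\end{enumerate}
This yields exactly the shape \eqref{eq:local-matrix-gk-form}.

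\textbf{Sufficiency.} Given $M_\nabla$ of the displayed form, I would construct $\Phi_x$ for each $x$ by the case filtration of \cite{3}.
\begin{enumerate}
\item If $x_1\neq0$, I use the free parameters $a_3,\dots,a_k$ in the first column of $A_x$ to absorb the lower-order rows. Row $m$ of $A_x\bar x$ is linear in $a_m$ with coefficient $x_1$, so each coordinate of $M_\nabla\bar x$ from the third onward can be matched by solving for $a_m$ recursively in $m$.
\item If $x_1=0$ and $x_2\neq0$, I use $b_3,\dots,b_k$ in the same way, with coefficient $x_2$.
\item In general, if $x_1=\dots=x_{s-1}=0$ and $x_s\ne0$, I use the first nonzero coordinate and the entries of column $s$ of $A_x$ that depend linearly on a free $a$- or $b$-parameter.
\end{enumerate}
In every case the top coordinate $x_s$ must be matched by the diagonal: I need $a_1^{\,s}=b_{s,s}$, and this is solvable over $\mathbb C$ because $b_{s,s}\neq0$.

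\textbf{Main obstacle.} The hard step is the simultaneous matching of the first two coordinates when $x_1\neq0$ and $x_2\neq0$. There the automorphism forces the $e_1$- and $e_2$-coordinates of $\Phi_x(x)$ to be $a_1x_1$ and $a_1^2x_2$. Matching these to $b_{1,1}x_1$ and $b_{2,2}x_2$ requires $b_{2,2}=b_{1,1}^2$, and I do not yet see how to avoid this; similar coupling appears in the later diagonal rows. Before completing the sufficiency argument I would first test the statement on $x=e_1+e_2$ with a choice $b_{2,2}\neq b_{1,1}^2$, and then decide how the diagonal entries in \eqref{eq:local-matrix-gk-form} must be read. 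Only after that would I finish the recursive solution for $a_m$ and $b_m$ described above.
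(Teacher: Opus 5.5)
Your necessity argument is correct; it is a more careful version of the paper's ``substitute $x=e_j$'' step. The difficulty you isolate in the sufficiency part, however, cannot be overcome: it is a counterexample to the ``if'' direction of the statement. Take $M_\nabla=\operatorname{diag}(1,2,1,\dots,1)$, which has the displayed shape with $b_{1,1}\neq0$. For $x=e_1+e_2$, every automorphism satisfies $\Phi(x)=a_1e_1+a_1^2e_2+(\text{terms in }e_3,\dots,e_k)$, whereas $\nabla(x)=e_1+2e_2$. One would need $a_1=1$ and $a_1^2=2$, which is impossible.

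The statement is also too weak on the diagonal. It only requires $d_{1,1}\neq0$, so it admits singular matrices (e.g.\ one with $b_{3,3}=0$), which Lemma~\ref{lem:bijective} and your test with $x=e_j$ rule out. The paper's sufficiency argument misses exactly the point you noticed. In the case $x_1\neq0$ it copies the $\mathfrak m_0(n)$ proof, treating $a_2^x$ and $b_2^x$ as free, and never imposes the second equation $b_{2,2}x_2=(a_1^x)^2x_2$ of its own system~\eqref{local system gk}. In the analogous Theorems~\ref{thm:local-m02n} and~\ref{thm:local-m03n} the paper does add $b_{2,2}=b_{1,1}^2$.

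So rather than deciding ``how the diagonal should be read'', record $b_{2,2}=b_{1,1}^2$ (from $x=e_1+e_2$, given $b_{1,2}=b_{2,1}=0$) and $b_{j,j}\neq0$ as further \emph{necessary} conditions. Only the necessity direction, strengthened in this way, can be proved.

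These strengthened conditions are still not sufficient, so the recursion you sketch cannot be finished even after that correction. In the case $x_1=0$, $x_2\neq0$ you solve row~$4$ for $b_4$, but the matrix of Theorem~\ref{theorem gk aut} is over-parametrized. For an automorphism $\Phi$ with $a_2=0$, $b_2=a_1^2$, the paper's own comparison of the $e_7$-coefficients of $[\Phi(e_1),\Phi(e_4)]$ and $\frac{1}{2+\alpha}[\Phi(e_2),\Phi(e_3)]$ yields $b_3^2=2a_1^2b_4$. The displayed matrix drops this relation. The computation is identical in all five families, since the brackets landing in $e_3,\dots,e_7$ coincide.

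With $x_1=0$, row~$4$ of $A_x\bar x$ is $b_4x_2+a_1b_3x_3+a_1^4x_4$, and it contains no free $a$-parameter to absorb anything. Consequently the map with $\nabla(e_2)=e_2+e_4$ and $\nabla(e_i)=e_i$ for $i\neq2$ meets every condition above. Yet at $x=e_2$ it would need $a_1^2=1$, $b_3=0$, $b_4=1$, contradicting $b_3^2=2a_1^2b_4$. A correct characterization of the local automorphisms of $\mathfrak g_{k,\alpha}$ therefore has to start from a corrected description of $\operatorname{Aut}(\mathfrak g_{k,\alpha})$.
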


\begin{proof}
Let $\nabla $ be an arbitrary local automorphism on $\mathfrak{m}_0(n)$, and, let $ M_{\nabla} $ be the matrix of $ \nabla$. By the definition for
all $x = x_1e_1 + \cdot\cdot\cdot + x_ne_n \in L_n$ there exists an automorphism $\phi_x $ on $ \mathfrak{m}_0(n) $ with respect to the basis 
$\{e_1, e_2, . . . , e_n\} $ of $m_0(n)$
$$\nabla(x)=\phi_x(x).$$

By Theorem~\eqref{Theoremm0n}  the automorphism $\phi_x$ has the following matrix form:
\begin{equation}
A_x = \begin{pmatrix}\label{matrix_AX_gk}
a_1^x      & 0        & 0        & 0        & 0 & 0        & \dots & 0          & 0 \\
0        & (a_1^x)^2    & 0        & 0        & 0 & 0        & \dots & 0          & 0 \\
a_3^x      & b_3^x      & (a_1^x)^3    & 0        & 0 & 0        & \dots & 0          & 0 \\
a_4^x      & b_4^x      & a_1^x b_3^x  & (a_1^x)^4      & 0 & 0        & \dots & 0         & 0 \\
a_5^x      & b_5^x      & c_{5,3}^x  & (a_1^x)^2 b_3^x  & (a_1^x)^5 & 0    & \dots & 0         & 0 \\
a_6^x      & b_6^x      & c_{6,3}^x  & c_{6,4}^x  & (a_1^x)^3 b_3^x   & (a_1^x)^6     & \dots         & 0  & 0 \\
\vdots   & \vdots   & \vdots   & \vdots   & \vdots & \vdots    & \ddots & \vdots & \vdots\\
a_{n-1}^x  & b_{n-1}^x  & c_{n-1,3}^x& c_{n-1,4}^x & c_{n-1,5}^x & c_{n-1,6}^x & \dots & (a_1^x)^{n-1}  & 0 \\
a_{n}^x    & b_{n}^x    & c_{n,3}^x  & c_{n,4}^x   & c_{n,5}^x   & c_{n,6}^x   & \dots & (a_1^x)^{n-3}b_3^x & (a_1^x)^n
\end{pmatrix}.
\end{equation}
where  $c_{i,j}^x $ is determined in accordance with Theorem~\eqref{theorem gk aut}

Sequentially substituting the basis elements \(x = e_1, \dots, e_n\) into the local automorphism condition \(\nabla(x) = \phi_x(x)\) (or equivalently \(M_\nabla\bar{x} = A_x\bar{x}\), where \(\bar{x}\)) represents the coordinate vector of \({x}\)) implies that the matrix \(A = (b_{i,j})\) is lower triangular (\(b_{i,j} = 0\) for \(i < j\)) with non-zero diagonal entries (\(b_{k,k} \neq 0\) for \(1 \le k \le n\)). This structural property relies fundamentally on the underlying field of complex numbers \(\mathbb{C}\), over which the algebra $g_{k, \alpha}$ is defined. Furthermore, applying the relation \(A\bar{x} = A_x\bar{x}\) to an arbitrary element \(x = \sum_{i=1}^n x_i e_i\) yields the following system of algebraic equations:
\begin{equation}\label{local system gk}
    \begin{cases}
        b_{1,1}x_1=a_1^x x_1, \\
        b_{2,2}x_2=(a_1^x)^2 x_2, \\
        b_{3,1}x_1+b_{3,2}x_2+b_{3,3}x_3=a_3^x x_1+b_3^x x_2+(a_1^x)^3 x_3, \\
        \sum_{j=1}^i b_{i,j}x_j = a_i^x x_1 + b_i^x x_2 + \sum_{j=3}^{i-2} c_{i,j}^x x_j+(a_1^x)^{i-3}b_3^x x_{i-1}+(a_1^x)^i x_i, \quad 1 \le i \le k,
    \end{cases}
\end{equation}

in terms of the variables \(a_{i}^{x}\) $(1 \leq i \leq n)$ and  \(b_{i}^{x}\) $(2 \leq i \leq n)$.

We shall now demonstrate that the linear operator represented by the matrix $A$
 a local automorphism. By definition, if for every element $x \in g_{k, \alpha}$
there exists a matrix $A_x$ of the form given in Theorem~\eqref{theorem gk aut} satisfying the identity
$$A_x \bar{x} = A \bar{x}$$
then the corresponding linear operator is a local automorphism.
Equivalently, the linear operator defined by $A$ is a local automorphism if, for each
$x \in g_{k, \alpha}$, the system of linear equations~\eqref{local system gk} possesses a valid solution with
respect to the variables

$a_1^x, a_3^x, \dots, a_n^x, b_2^x, \dots, b_n^x $.

To establish this, we analyze
the following cases:

\begin{itemize}
    \item  If $x_1 \neq 0$, then we directly obtain $a_1^x = b_{1,1}$. For each $i$ satisfying $2 \le i \le n$, the remaining unknowns $a_i^x$ can be  determined by the following explicit formula:
$$a_i^x = b_{i,1} + \frac{1}{x_1}\Bigl( (b_{i,2}-b_i^x)x_2+\sum_{j=3}^i (b_{i,j} - \bigl(a_1^x\bigl)^{j-2} b_{i-j+2}^x) x_j\Bigl),$$
where $2 \le i \le n$, and these equalities give a solution of the system of equations~\eqref{local system gk}. Here  $b_{2}^x,b_{3}^x, \cdots , b_{n}^x$ are defined arbitrary.[Indeed, since the system of linear equations~\eqref{local system gk} generally comprises \(n\) equations and \(2n-1\) unknowns, it exhibits \(n-1\) degrees of freedom. Consequently, these particular parameters can be chosen arbitrarily with respect to the remaining variables to guarantee the consistency of the system.]
\item  if $x_1=0$ and $x_2 \neq 0$, then $b_{2,2}=b_2^x$ for each $i$ satisfying $3 \le i \le n$, the remaining unknowns $a_i^x$ can be recursively determined by the following explicit formula:

$a_3^x=\frac{1}{x_1}\bigl( \sum_{j=1}^{3}b_{3,j}x_j -  b_3^x x_2 -(b_{1,1})^3 x_3  \bigl)      $

$\cdots \cdots \cdots \cdots$

$a_i^x=\frac{1}{x_1} \bigl(   \sum_{j=1}^{i}b_{i,j}x_j-\sum_{j=3}^{i-2}c_{i,j}^x x_j-(b_{1,1})^{i-3}b_3^x x_{i-1}-(b_{1,1})^ix_i \bigl)$

 Here  $b_i^x$ $3\le i \le n$ are defined arbitrary.
\item  if $x_1=0$ and $x_2 \neq 0$, then $b_{2,2}=(a_1^x)^2 \Longrightarrow a_1^x=\sqrt{b_{2,2}}$ for each $i$ satisfying $3 \le i \le n$, the remaining unknowns $b_i^x$ can be recursively determined by the following explicit formula:

$b_3^x=\frac{1}{x_2}\bigl( \sum_{j=2}^{3}b_{3,j}x_j -  (\sqrt{b_{2,2}})^3 x_3  \bigl)      $

$\cdots \cdots \cdots \cdots$

$b_i^x=\frac{1}{x_2} \bigl(   \sum_{j=2}^{i}b_{i,j}x_j-\sum_{j=3}^{i-2}c_{i,j}^x x_j-(\sqrt{b_{2,2}})^{i-3}b_3^x x_{i-1}-(\sqrt{b_{2,2}})^ix_i \bigl)$

give solution of the system of equation~\eqref{local system gk}. Here the variables $a_i^x$ $3\le i \le n$ is arbitrary value.

\item If $ x_1=x_2=\cdots= x_{m-1} = 0$ and $x_m \neq 0, $ then the following equalities, $m \le i \le n$

$b_{m,m}={(a_1^x)^{m}} \Longrightarrow a_1^x=\sqrt[m]{b_{m,m}}$

$b^x_{i-m+2}= \frac{1}{a_1^{\,m-2} x_m} \bigl(\sum_{j=m}^{i}b_{i,j} -Tx_m- \sum_{j=m+1}^{i-2} c_{i,j}^x x_j \\ -(\sqrt[m]{b_{m,m}})^{i-3}b_3^x x_{i-1}-(\sqrt[m]{b_{m,m}})^i x_i\bigl); \quad m \le i \le k, \quad 3 \le m \le k$

give solution of the system of equation~\eqref{local system gk}. Here the variables $a_i^x$ is arbitrary value.

\end{itemize}
Consequently, the system of linear equations~\eqref{local system gk} consistently possesses a solution
for every element $x \in g_{k, \alpha}$, which implies that the linear operator induced by
the matrix $A$ constitutes a local automorphism. Owing to the fact that the local
automorphism $\nabla$ was chosen arbitrarily and is pointwise defined by the
automorphisms as characterized in Theorem~\eqref{theorem gk aut}, it follows that the matrix
representation of any local automorphism of the algebra $g_{k, \alpha}$ necessarily admits
a lower triangular form.
\end{proof}

\subsection{Local automorphisms of the algebra \texorpdfstring{$m_0(n)$}{m(0n)}}\label{subsec9}

\begin{Th}\label{thm:local-m0}
A linear map $\nabla$ on the filiform Lie algebra $\mathfrak{m}_0(n)$ ($n \ge 3$) is a local automorphism if and only if its matrix representation $M_{\nabla} = (d_{i,j})_{n \times n}$ with respect to the homogeneous basis $\{e_1, e_2, \dots, e_n\}$ is lower triangular and invertible, i.e.,
\begin{equation}\label{eq:local-matrix-form}
M_{\nabla} = \begin{pmatrix}
b_{1,1} & 0 & 0 & 0 & \cdots & 0 & 0 \\
b_{2,1} & b_{2,2} & 0 & 0 & \cdots & 0 & 0 \\
b_{3,1} & b_{3,2} & b_{3,3} & 0 & \cdots & 0 & 0 \\
b_{4,1} & b_{4,2} & b_{4,3} & b_{4,4} & \cdots & 0 & 0 \\
\vdots & \vdots & \vdots & \vdots & \ddots & \vdots & 0 \\
b_{n-1,1} & b_{n-1,2} & b_{n-1,3} & b_{n-1,4} & \cdots & b_{n-1,n-1} & 0 \\
b_{n,1} & b_{n,2} & b_{n,3} & b_{n,4} & \cdots & b_{n,n-1} & b_{n,n}
\end{pmatrix},
\end{equation}
where $b_{i,j} \in \mathbb{C}$ and $b_{i,i} \neq 0$ for all $1 \le i \le n$.
\end{Th}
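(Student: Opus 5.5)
The plan is to prove the two directions separately. Necessity comes from testing the local condition on the basis vectors and using Lemma~\ref{lem:bijective}. Sufficiency uses the explicit automorphism matrix $A_{m_0(n)}$ of Theorem~\ref{Theoremm0n}: for each $x$ we solve the system $M_\nabla\bar x=A_x\bar x$ of \eqref{systemlocal}, splitting into cases according to the first non-vanishing coordinate of $x$. The key feature is that, unlike $\mathfrak{g}_{k,\alpha}$ or $\mathfrak{m}_2(n)$, the automorphism matrix of $\mathfrak{m}_0(n)$ has the two columns $(a_i)$ and $(b_i)$ completely free, apart from $a_1\neq0$ and $b_2\neq0$. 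So no power relation such as $b_2=a_1^2$ ties the diagonal entries together.

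\textbf{Necessity.} Let $\nabla$ be a local automorphism with matrix $M_\nabla=(b_{i,j})$. For each $k$, apply the definition to $x=e_k$: $\nabla(e_k)=\phi_{e_k}(e_k)$, which is the $k$-th column of an automorphism matrix of the form $A_{m_0(n)}$. By Theorem~\ref{Theoremm0n} (with $b_1=0$ from Lemma~\ref{coeff b1}), this column lies in $\operatorname{span}\{e_k,\dots,e_n\}$. Hence $b_{i,k}=0$ for $i<k$, so $M_\nabla$ is lower triangular. Invertibility follows from Lemma~\ref{lem:bijective}, so $b_{i,i}\neq0$ for every $i$. Alternatively, the diagonal entry of column $k$ equals $a_1^{k-2}b_2\neq0$ for the particular $\phi_{e_k}$.

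\textbf{Sufficiency.} Let $M_\nabla$ be lower triangular with nonzero diagonal, and fix $x=\sum x_ie_i\neq0$. Row $i$ of $A_x\bar x$ reads
\[
a_ix_1+b_ix_2+\sum_{k=3}^{i}a_1^{k-2}b_{i-k+2}x_k .
\]
\emph{Case $x_1\neq0$.} Put $a_1=b_{1,1}$, choose $b_2=b_{2,2}$ and $b_j=0$ for $j\ge3$. For every $i\ge2$, solve the $i$-th equation for $a_i$; this is possible because $a_i$ enters only row $i$, with coefficient $x_1$.

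\emph{Case $x_1=\dots=x_{m-1}=0$, $x_m\neq0$ with $m\ge2$.} Rows $1,\dots,m-1$ hold trivially. Take $a_1=1$ and all $a_i=0$ for $i\ge2$; the first column then plays no role. Row $m$ becomes $b_2x_m=b_{m,m}x_m$, so $b_2=b_{m,m}\neq0$. For $i>m$, row $i$ reads
\[
b_{i-m+2}x_m+\sum_{k=m+1}^{i}b_{i-k+2}x_k=\sum_{j=m}^{i}b_{i,j}x_j ,
\]
which involves $b_{i-m+2}$ linearly with coefficient $x_m\neq0$, together with $b_s$ for $s<i-m+2$ that are already determined. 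Solving recursively for $i=m+1,\dots,n$ fixes $b_3,\dots,b_{n-m+2}$; the remaining $b_j$ are arbitrary. The resulting $A_x$ has $a_1\neq0$ and $b_2\neq0$, hence is an automorphism matrix by Theorem~\ref{Theoremm0n}. Thus $\nabla(x)=\phi_x(x)$ for every $x$.

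The main obstacle is this triangular recursion in the case $x_1=0$. One must check that in row $i$ the unknown with the largest index is exactly $b_{i-m+2}$ and that it appears with coefficient $a_1^{m-2}x_m$, which is nonzero. This is a bookkeeping check on the index shift $b_{j}\,e_{j+k-2}$ in column $k$. Choosing $a_1=1$ avoids any root extraction, and this freedom exists precisely because $b_2$ is independent of $a_1$ in $\mathfrak{m}_0(n)$.
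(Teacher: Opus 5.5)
Your proposal is correct and follows essentially the same route as the paper: necessity comes from evaluating the local condition at $e_1,\dots,e_n$, and sufficiency from splitting according to the first non-vanishing coordinate of $x$, solving for the $a_i$ when $x_1\neq0$ and recursively for the $b_j$ otherwise. Your explicit choices ($b_2=b_{2,2}$, resp.\ $a_1=1$, $b_2=b_{m,m}$) also make the non-vanishing of $a_1,b_2$ explicit, which the paper glosses over by calling these parameters arbitrary.

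One caveat, shared with the paper: the necessity step relies on $b_1=0$, which holds only for $n\ge4$. Indeed $\mathfrak{m}_0(3)$ is the Heisenberg algebra, and $e_1\mapsto e_2$, $e_2\mapsto e_1$, $e_3\mapsto -e_3$ is an automorphism (hence a local automorphism) whose matrix is not lower triangular, so the statement should be read for $n\ge4$.
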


\begin{proof}
Let $\nabla $ be an arbitrary local automorphism on $\mathfrak{m}_0(n)$, and, let $ M_{\nabla} $ be the matrix of $ \nabla$. By the definition for
all $x = x_1e_1 + \cdot\cdot\cdot + x_ne_n \in L_n$ there exists an automorphism $\phi_x $ on $ \mathfrak{m}_0(n) $ with respect to the basis $ \{e_1, e_2, . . . , e_n\} $ of $m_0(n)$
$$\nabla(x)=\phi_x(x).$$

By Theorem~\eqref{Theoremm0n}  the automorphism $\phi_x$ has the following matrix form:
\[
A_x = \begin{pmatrix}\label{A_x form matrix m0n}
a_1^x & 0 & 0 & 0 & \cdots & 0 & 0 \\
a_2^x & b_2^x & 0 & 0 & \cdots & 0 & 0 \\
a_3^x & b_3^x & a_1^x b_2^x & 0 & \cdots & 0 & 0 \\
a_4^x & b_4^x & a_1^x b_3^x & (a_1^x)^2 b_2^x & \cdots & 0 & 0 \\
\vdots & \vdots & \vdots & \vdots & \ddots & \vdots & 0 \\
a_{n-1}^x & b_{n-1}^x & a_1^x b_{n-2}^x & (a_1^x)^2 b_{n-3}^x & \cdots & (a_1^x)^{n-3} b_2^x & 0 \\
a_n^x & b_n^x & a_1^x b_{n-1}^x & (a_1^x)^2 b_{n-2}^x & \cdots & (a_1^x)^{n-3} b_3^x & (a_1^x)^{n-2} b_2^x
\end{pmatrix}.
\]
Sequentially substituting the basis elements \(x = e_1, \dots, e_n\) into the local automorphism condition \(\nabla(x) = \phi_x(x)\) (or equivalently \(M_\nabla\bar{x} = A_x\bar{x}\), where \(\bar{x}\)) represents the coordinate vector of \({x}\)) implies that the matrix \(A = (b_{i,j})\) is lower triangular (\(b_{i,j} = 0\) for \(i < j\)) with non-zero diagonal entries (\(b_{k,k} \neq 0\) for \(1 \le k \le n\)). This structural property relies fundamentally on the underlying field of complex numbers \(\mathbb{C}\), over which the algebra $\mathfrak{m}_0(n)$ is defined. Furthermore, applying the relation \(A\bar{x} = A_x\bar{x}\) to an arbitrary element \(x = \sum_{i=1}^n x_i e_i\) yields the following system of algebraic equations:

\begin{equation}\label{local sysytem m0n}
    \sum_{j=1}^{i}b_{i,j}x_j=a_i^x x_1+b_i^x x_2+\sum_{j=3}^{i}\bigl(a_1^x\bigl)^{j-2} b_{i-j+2}^xx_j
\end{equation}

in terms of the variables \(a_{i}^{x}\) $(1 \leq i \leq n)$ and  \(b_{i}^{x}\) $(2 \leq i \leq n)$.

We shall now demonstrate that the linear operator represented by the matrix $A$
 a local automorphism. By definition, if for every element $x \in \mathfrak{m}_0(n)$
there exists a matrix $A_x$ of the form given in Theorem~\eqref{Theoremm0n} satisfying the identity
$$A_x \bar{x} = A \bar{x}$$
then the corresponding linear operator is a local automorphism.
Equivalently, the linear operator defined by $A$ is a local automorphism if, for each
$x \in \mathfrak{m}_0(n)$, the system of linear equations~\eqref{local sysytem m0n} possesses a valid solution with
respect to the variables

$a_1^x, a_2^x, \dots, a_n^x, b_2^x, \dots, b_n^x $.

To establish this, we analyze
the following cases:

\begin{itemize}
    \item  If $x_1 \neq 0$, then we directly obtain $a_1^x = b_{1,1}$. For each $i$ satisfying $2 \le i \le n$, the remaining unknowns $a_i^x$ can be  determined by the following explicit formula:
$$a_i^x = b_{i,1} + \frac{1}{x_1}\Bigl( (b_{i,2}-b_i^x)x_2+\sum_{j=3}^i (b_{i,j} - \bigl(a_1^x\bigl)^{j-2} b_{i-j+2}^x) x_j\Bigl),$$
where $2 \le i \le n$, and these equalities give a solution of the system of equations~\eqref{local sysytem m0n}. Here  $b_{2}^x,b_{3}^x, \cdots , b_{n}^x$ are defined arbitrary.[Indeed, since the system of linear equations~\eqref{local sysytem m0n} generally comprises \(n\) equations and \(2n-1\) unknowns, it exhibits \(n-1\) degrees of freedom. Consequently, these particular parameters can be chosen arbitrarily with respect to the remaining variables to guarantee the consistency of the system.]
\item  if $x_1=0$ and $x_2 \neq 0$, then $b_{2,2}=b_2^x$ for each $i$ satisfying $3 \le i \le n$, the remaining unknowns $b_i^x$ can be recursively determined by the following explicit formula:

$$b_i^x = b_{i,2} + \frac{1}{x_2}\Bigl( \sum_{j=3}^i (b_{i,j} - \bigl(a_1^x\bigl)^{j-2} b_{i-j+2}^x) x_j\Bigl),$$
give solution of the system of equation~\eqref{local sysytem m0n}. Here the variables $a_1^x$ is arbitrary value.

\item If $ x_1=x_2=\cdots= x_{m-1} = 0$ and $x_m \neq 0, $ then the following equalities, $m \le i \le n$

$b_2^x=\frac{b_{ii}}{(a_1^x)^{i-2}}$

$b_3^x=\frac{1}{(a_1^x)^{i-2} x_i} \bigl( b_{i+1,i}x_i+b_{i+1,i+1}x_{i+1}-\frac{b_{i,i}}{a_1^x} x_{i+1} \bigl)$

$\cdots\cdots$

$b_{i-m+2}^x=\frac{b_{i,m}}{(a_1^x)^{m-2}}+\frac{1}{(a_1^x)^{m-2}x_m} \sum_{j=m+1}^{i}(b_{i,j}-(a_1^x)^{m-2}b_{i-j+2}^x)x_{m+1}$

give solution of the system of equation~\eqref{local sysytem m0n}. Here the variables $a_1^x$ is arbitrary value.
\end{itemize}
Consequently, the system of linear equations~\eqref{local sysytem m0n} consistently possesses a solution
for every element $x \in m_0(n)$, which implies that the linear operator induced by
the matrix $A$ constitutes a local automorphism. Owing to the fact that the local
automorphism $\nabla$ was chosen arbitrarily and is pointwise defined by the
automorphisms as characterized in Theorem~\eqref{Theoremm0n}, it follows that the matrix
representation of any local automorphism of the algebra $m_0(n)$ necessarily admits
a lower triangular form.
\end{proof}

We can similarly prove the following theorems.

\subsection{Local automorphisms algebra \texorpdfstring{$m_2(n)$}{m(2n)}}\label{subsec10}

The following theorems can be proven similar to the proof of Theorem~\eqref{thm:local-m0}.

\begin{Th}\label{thm:local-m2n}
A linear map $\nabla$ on the filiform Lie algebra $\mathfrak{m}_2(n)$ ($n \ge 5$) is a local automorphism if and only if its matrix representation $M_{\nabla} = (d_{i,j})_{n \times n}$ with respect to the homogeneous basis $\{e_1, e_2, \dots, e_n\}$ is lower triangular and invertible, i.e.,
\begin{equation}\label{eq:local-matrix-form m_2(n)}
M_{\nabla} = \begin{pmatrix}
b_{1,1} & 0 & 0 & 0 & \cdots & 0 & 0 \\
0 & b_{2,2} & 0 & 0 & \cdots & 0 & 0 \\
b_{3,1} & b_{3,2} & b_{3,3} & 0 & \cdots & 0 & 0 \\
b_{4,1} & b_{4,2} & b_{4,3} & b_{4,4} & \cdots & 0 & 0 \\
\vdots & \vdots & \vdots & \vdots & \ddots & \vdots & 0 \\
b_{n-1,1} & b_{n-1,2} & b_{n-1,3} & b_{n-1,4} & \cdots & b_{n-1,n-1} & 0 \\
b_{n,1} & b_{n,2} & b_{n,3} & b_{n,4} & \cdots & b_{n,n-1} & b_{n,n}
\end{pmatrix},
\end{equation}
where $b_{i,j} \in \mathbb{C}$ and $b_{i,i} \neq 0$ for all $1 \le i \le n$.
\end{Th}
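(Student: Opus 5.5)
The plan follows the scheme of Theorem~\ref{thm:local-m0} and Theorem~\ref{thm:local-gk}, with the automorphism matrices now taken from Theorem~\ref{theorem m_2(n)}. Fix a local automorphism $\nabla$ with matrix $M_\nabla=(b_{i,j})$. For each $x$, the automorphism $\phi_x$ has the matrix $A_x$ of Theorem~\ref{theorem m_2(n)}. Its diagonal is $(a_1^x,(a_1^x)^2,\dots,(a_1^x)^n)$, the $(2,1)$ entry is $0$, and column $k\ge 3$ is $(a_1^x)^{k-2}$ times a shift of $(a_1^x)^2,b_3^x,\,b_j^x-a_1^xa_{j-1}^x,\dots$. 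The condition is again the system $M_\nabla\bar x=A_x\bar x$.

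\textbf{Necessity.} Substitute $x=e_k$ for $k=1,\dots,n$. The vector $M_\nabla\bar e_k$ is the $k$-th column of $A_{e_k}$. This column vanishes above row $k$, which gives $b_{i,j}=0$ for $i<j$. It has diagonal entry $(a_1^{e_k})^k\neq 0$, since $a_1\neq 0$ for an automorphism. For $k=1$ the $(2,1)$ entry of $A_{e_1}$ is $0$, hence $b_{2,1}=0$. Alternatively, all of this also follows from Lemma~\ref{lem:lcsinv} together with Lemma~\ref{lem:bijective}. This yields exactly the pattern~\eqref{eq:local-matrix-form m_2(n)}.

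\textbf{Sufficiency.} Given such a matrix, for each $x$ I would solve the system for the unknowns $a_1^x,a_3^x,\dots,a_n^x,b_3^x,\dots,b_n^x$, filtering by the first nonzero coordinate as in the paper.
\begin{itemize}
\item If $x_1=\dots=x_{m-1}=0$ and $x_m\neq 0$ with $m\ge 2$, take $a_1^x=\sqrt[m]{b_{m,m}}$, which exists over $\mathbb{C}$.
\item The rows $i>m$ are then solved recursively. The coefficient of the new unknown ($b^x_{i-m+2}$ when $m\ge3$, or $b_i^x$ when $m=2$) is $(a_1^x)^{m-2}x_m\neq 0$.
\item The $a_j^x$ enter only through the combinations $b_j^x-a_1^xa_{j-1}^x$, so they can be fixed arbitrarily and absorbed.
\item If $x_1\neq 0$, row $1$ forces $a_1^x=b_{1,1}$. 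Rows $i\ge3$ are then solved for $a_i^x$, whose coefficient $x_1\neq0$ appears in each row.
\end{itemize}

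\textbf{Main obstacle.} The hardest step is row $2$ in the case $x_1\neq 0\neq x_2$. This row reads $b_{2,2}x_2=(a_1^x)^2x_2$ and contains no free unknown, so it forces $b_{2,2}=b_{1,1}^2$. This differs from the $\mathfrak m_0(n)$ case, where $b_2^x$ was free.

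I would first try to escape by showing that $\phi_x$ need not be normalised with the $e_2$-coefficient $(a_1^x)^2$, i.e.\ by rechecking the $e_5$-comparison in Theorem~\ref{theorem m_2(n)}. If that comparison is really forced, however, this step cannot be carried out as stated. In that case the sufficiency direction holds only on the subclass with $b_{2,2}=b_{1,1}^2$. More generally the diagonal must then equal $(b_{1,1}^i)_i$, by testing $x=e_1+e_i$.

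So the plan is to complete necessity fully, and to run the case analysis for sufficiency while recording precisely where the diagonal compatibility is forced. I expect this to be exactly the point where the statement as worded needs the extra power constraints that are visible in the $A_x$ of Theorem~\ref{theorem m_2(n)}.
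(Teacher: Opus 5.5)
\textbf{The row-2 obstruction you found is real.} It is a defect of the statement, not of your plan, and the escape you hope for does not exist. The relation $b_2=a_1^2$ for automorphisms comes from comparing the $e_5$-coefficients of $[\phi(e_1),\phi(e_4)]$ and $[\phi(e_2),\phi(e_3)]$, i.e.\ from $[e_2,e_3]=e_5$, so it holds for every $n\ge 5$. For $n\ge 6$ one also has $a_2=0$. Hence, for $n\ge6$, the second coordinate of $\phi_x(e_1+e_2)$ is the square of the first, and every local automorphism satisfies $b_{2,2}=b_{1,1}^2$. For instance, $M_\nabla=\mathrm{diag}(1,2,1,\dots,1)$ satisfies every hypothesis of the theorem but is not a local automorphism.

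The paper gives no separate proof; it says the result follows ``similar to Theorem~\ref{thm:local-m0}''. That transcription fails exactly at your row. In $\mathfrak{m}_0(n)$ the second row of $A_x$ is $(a_2^x,b_2^x,0,\dots,0)$ with both entries free, whereas for $\mathfrak{m}_2(n)$ it is $(0,(a_1^x)^2,0,\dots,0)$. The paper itself imposes $b_{2,2}=b_{1,1}^2$ in Theorems~\ref{thm:local-m02n} and~\ref{thm:local-m03n}.

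With that one extra condition, your case analysis is a complete proof of the corrected statement for $n\ge6$. When $x_1\neq0$ you take $a_1^x=b_{1,1}$ and solve for the $a_i^x$ row by row. Otherwise you take $a_1^x=\sqrt[m]{b_{m,m}}$ and solve for the $b^x$'s row by row.

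\textbf{Three claims in your proposal need correcting.}

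First, the diagonal is \emph{not} forced to be $(b_{1,1}^i)_i$. For $x=e_1+e_i$ with $i\ge3$, row $i$ reads $a_i^x+(a_1^x)^i=b_{i,1}+b_{i,i}$, and $a_i^x$ is free, exactly as in your own bullet for $x_1\neq0$. When $x_1=x_2=0\neq x_m$, the mismatch between $b_{i,i}$ and $(a_1^x)^i$ is absorbed by $b^x_{i-m+2}$. So $b_{2,2}=b_{1,1}^2$ is the only condition beyond $b_{i,i}\neq0$; for example, $\mathrm{diag}(1,1,2,1,\dots,1)$ is a local automorphism.

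Second, Lemmas~\ref{lem:lcsinv} and~\ref{lem:bijective} only give triangularity of columns $3,\dots,n$. The zero in position $(1,2)$ needs the characteristic ideal $\mathrm{span}\{e_2,\dots,e_n\}$, which is the centralizer of $\mathrm{span}\{e_{n-1},e_n\}$. The condition $b_{2,1}=0$ needs $a_2=0$ for automorphisms, which is not a lower-central-series fact.

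Third, Theorem~\ref{theorem m_2(n)} derives $a_2=0$ from an $e_6$-coefficient, which does not exist when $n=5$. In $\mathfrak{m}_2(5)$ the linear map with $e_1\mapsto e_1+te_2$, $e_4\mapsto e_4+te_5$, and $e_2,e_3,e_5$ fixed is an automorphism. So $b_{2,1}=0$ is not necessary there, and your step reading $b_{2,1}=0$ off $A_{e_1}$ inherits this error. For $n=5$, row $2$ is always solvable through $a_2^x$, and every invertible lower triangular matrix is a local automorphism of $\mathfrak{m}_2(5)$, just as for $\mathfrak{m}_0(n)$. The corrected theorem should therefore be stated for $n\ge6$, with $n=5$ treated separately.
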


\subsection{Local automorphisms of the algebra \texorpdfstring{$W^+(n)$}{W(n)}}\label{subsec11}
\begin{Th}\label{thm:local-W}
A linear map $\nabla$ on the Lie algebra $W^+(n)$ ($n \ge 12$) is a local automorphism if and only if its matrix $M_{\nabla}$ with respect to the basis $\{e_1, e_2, \dots, e_n\}$ has a lower triangular form:
\begin{equation}\label{eq:local-matrix-W-form}
M_{\nabla} = \begin{pmatrix}
b_{1,1} & 0 & 0 & 0 & \cdots & 0 & 0 \\
0 & b_{2,2} & 0 & 0 & \cdots & 0 & 0 \\
b_{3,1} & b_{3,2} & b_{3,3} & 0 & \cdots & 0 & 0 \\
b_{4,1} & b_{4,2} & b_{4,3} & b_{4,4} & \cdots & 0 & 0 \\
\vdots & \vdots & \vdots & \vdots & \ddots & \vdots & 0 \\
b_{n-1,1} & b_{n-1,2} & b_{n-1,3} & b_{n-1,4} & \cdots & b_{n-1,n-1} & 0 \\
b_{n,1} & b_{n,2} & b_{n,3} & b_{n,4} & \cdots & b_{n,n-1} & b_{n,n}
\end{pmatrix},
\end{equation}
where $b_{i,j} \in \mathbb{C}$ and $b_{i,i} \neq 0$ for all $1 \le i \le n$.
\end{Th}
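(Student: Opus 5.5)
We follow the scheme of the proof of Theorem~\ref{thm:local-m0}, using the automorphism description of Theorem~\ref{W+(n)umumiy} in place of Theorem~\ref{Theoremm0n}. For each $x$ the automorphism $\phi_x$ has a lower triangular matrix $A_x$ with first column $(a_1^x,0,a_3^x,\dots,a_n^x)^T$, second column $(0,(a_1^x)^2,b_3^x,\dots,b_n^x)^T$, and columns $k\ge 3$ given by the recursive coefficients $C^x(p,k)$ with diagonal $(a_1^x)^k$. The local condition is then the system $M_\nabla\bar x=A_x\bar x$, in the unknowns $a_1^x,a_3^x,\dots,a_n^x,b_3^x,\dots,b_n^x$.

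\textbf{Necessity.} Let $\nabla$ be a local automorphism. By Lemma~\ref{lem:lcsinv}, $\nabla$ preserves every term $\mathfrak g^r=\operatorname{span}\{e_{r+1},\dots,e_n\}$ (Lemma~\ref{lem:LCS-unified}). Hence $\nabla(e_k)\in\operatorname{span}\{e_k,\dots,e_n\}$ for $k\ge3$, so columns $3,\dots,n$ are lower triangular. Next we take $x=e_1$ and $x=e_2$. Then $\nabla(e_1)=\phi_{e_1}(e_1)$ has zero $e_2$-coordinate, because $a_2^x=0$ in Theorem~\ref{W+(n)umumiy}; this gives $b_{1,2}$-type entries zero above the diagonal and $b_{2,1}=0$. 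Likewise $\nabla(e_2)=\phi_{e_2}(e_2)$ has zero $e_1$-coordinate, by Lemma~\ref{coeff b1}. So $M_\nabla$ has exactly the stated shape. Finally, Lemma~\ref{lem:bijective} gives invertibility, and for a triangular matrix this means $b_{i,i}\neq0$ for all $i$.

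\textbf{Sufficiency.} Given such an $M_\nabla$, for each $x$ we solve the system by filtering on the first nonzero coordinate, as in Theorem~\ref{thm:local-m0}.
\begin{itemize}
\item If $x_1\neq0$, the first row forces $a_1^x=b_{1,1}$. Each row $i\ge3$ is linear in $a_i^x$ with coefficient $x_1$, so we solve for $a_i^x$ successively, keeping the $b_j^x$ free.
\item If $x_1=0$ and $x_2\neq0$, we choose $a_1^x$ with $(a_1^x)^2=b_{2,2}$, which is possible over $\mathbb C$. Row $i\ge3$ is then linear in $b_i^x$ with coefficient $x_2$, and the $a_j^x$ are free.
\item If $x_1=\dots=x_{m-1}=0$ and $x_m\neq0$ with $m\ge3$, we take $a_1^x=\sqrt[m]{b_{m,m}}$. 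The rows $i>m$ are triangular in $b_{i-m+2}^x$, whose coefficient is $(a_1^x)^{m-2}x_m$ times the structure constant $\prod(\cdot)$ coming from the $W^+(n)$ brackets. This constant is nonzero because $[e_1,e_i]=(i-1)e_{i+1}\neq0$.
\end{itemize}

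\textbf{Main obstacle.} The hard step is the subcase $x_1x_2\neq0$ inside the first case. There the first two rows impose $a_1^x=b_{1,1}$ and $(a_1^x)^2=b_{2,2}$ simultaneously, while the statement allows $b_{1,1}$ and $b_{2,2}$ independently. No choice of the remaining free parameters $a_i^x,b_i^x$ enters rows $1$ and $2$, so this cannot be repaired by solving the later rows. As written, I expect this step to hold only under the compatibility $b_{2,2}=b_{1,1}^2$. I would try first to exploit the extra freedom in the recursive coefficients $C(p,k)$, but rows $1$ and $2$ do not involve them, so the argument is likely to stop here unless the statement is supplemented with that compatibility condition.
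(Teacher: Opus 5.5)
Your diagnosis of the main obstacle is correct. It is a defect of the statement, not of your method: the ``if'' direction is false as written. Every automorphism of $W^+(n)$ satisfies $\phi(e_1)\in a_1e_1+\operatorname{span}\{e_3,\dots,e_n\}$ and $\phi(e_2)\in a_1^2e_2+\operatorname{span}\{e_3,\dots,e_n\}$. So at $x=e_1+e_2$ the first two coordinates of $M_\nabla\bar x=A_x\bar x$ force $b_{1,1}=a_1^x$ and $b_{2,2}=(a_1^x)^2$. For instance, $M_\nabla=\mathrm{diag}(1,2,1,\dots,1)$ has the stated form but is not a local automorphism.

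The paper gives no separate proof here; it only says the argument is similar to Theorem~\ref{thm:local-m0}. That template breaks exactly where you say. For $\mathfrak m_0(n)$ the parameters $a_1^x,a_2^x,b_2^x$ are independent, so when $x_1\neq0$ one sets $a_1^x=b_{1,1}$ and solves every row $i\ge2$ for $a_i^x$. For $W^+(n)$ there is no $a_2^x$ available to solve row~$2$. The paper does impose $b_{2,2}=b_{1,1}^2$ in Theorems~\ref{thm:local-m02n} and~\ref{thm:local-m03n}, but omits it here. Your necessity half is fine.

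However, adding $b_{2,2}=b_{1,1}^2$ does not rescue the statement. Your sufficiency sketch also has a second gap you did not flag. In every case you treat $a_i^x,b_j^x$ ($i,j\ge3$) as free parameters, as Theorem~\ref{W+(n)umumiy} asserts, and that is false. Compute the $e_7$-coefficient of $\phi(e_5)$ both as $\tfrac13[\phi(e_1),\phi(e_4)]$ and as $[\phi(e_2),\phi(e_3)]$, using $a_2=0$ and $b_2=a_1^2$. This gives $10a_1^3b_4-3a_1^4a_3=8a_1^3b_4-3a_1^4a_3+2a_1b_3^2$, i.e.\ $a_1^2b_4=b_3^2$ for every automorphism. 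This is consistent with $\exp(t\,\mathrm{ad}\,e_1)$, which has $b_3=t$ and $b_4=t^2$.

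Now take $x=e_2+\lambda e_4$. Rows $2$ and $3$ force $(a_1^x)^2=b_{2,2}$ and $b_3^x=b_{3,2}$. Row $4$ then reads $b_{4,2}+\lambda b_{4,4}=b_{3,2}^2/b_{2,2}+\lambda b_{2,2}^2$ for every $\lambda$. Hence $b_{4,4}=b_{2,2}^2$ and $b_{4,2}=b_{3,2}^2/b_{2,2}$ are also necessary. Thus $\mathrm{diag}(1,1,1,2,1,\dots,1)$ satisfies your compatibility condition and still fails at $x=e_2+e_4$. The step ``row $i$ is linear in $b_i^x$ with coefficient $x_2$, with $b_i^x$ free'' in your case $x_1=0$, $x_2\neq0$ is therefore invalid; the same freeness is assumed in your $m\ge3$ case. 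A correct characterization of the local automorphisms of $W^+(n)$ has to start from the true, smaller automorphism group rather than from Theorem~\ref{W+(n)umumiy}.
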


\subsection{Local automorphisms of the algebra \texorpdfstring{$m_{0,1}(2k+1)$}{m(2k+1)}}\label{subsec12}

\begin{Th}\label{thm:local-m01n}
A linear map $\nabla$ on the Lie algebra $m_{0,1}(n)$ ($n = 2k+1, k \ge 3$) is a local automorphism if and only if its matrix $M_{\nabla}$ with respect to the basis $\{e_1, e_2, \dots, e_n\}$ has the following lower triangular form:
\begin{equation}\label{eq:local-matrix-m01n-form}
M_{\nabla} = \begin{pmatrix}
b_{1,1} & 0 & 0 & 0 & \cdots & 0 & 0 \\
b_{2,1} & b_{2,2} & 0 & 0 & \cdots & 0 & 0 \\
b_{3,1} & b_{3,2} & b_{3,3} & 0 & \cdots & 0 & 0 \\
b_{4,1} & b_{4,2} & b_{4,3} & b_{4,4} & \cdots & 0 & 0 \\
\vdots & \vdots & \vdots & \vdots & \ddots & \vdots & 0 \\
b_{n-1,1} & b_{n-1,2} & b_{n-1,3} & b_{n-1,4} & \cdots & b_{n-1,n-1} & 0 \\
b_{n,1} & b_{n,2} & b_{n,3} & b_{n,4} & \cdots & b_{n,n-1} & b_{n,n}
\end{pmatrix},
\end{equation}
where $d_{i,j} \in \mathbb{C}$ .
\end{Th}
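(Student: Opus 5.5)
The plan is to prove the two directions separately, following the scheme used for Theorem~\ref{thm:local-m0}. The automorphisms $\phi_x$ are those described in Theorem~\ref{thm:mainm(2k+1)}. Throughout, $A_x$ denotes the matrix of $\phi_x$, with parameters $a_1^x,\dots,a_n^x$ and $b_3^x,\dots,b_n^x$, and with $(2,2)$-entry $(a_1^x)^2$.

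\emph{Necessity.} Let $\nabla$ be a local automorphism with matrix $M_\nabla=(b_{i,j})$.
\begin{enumerate}
\item By Lemma~\ref{lem:lcsinv}, $\nabla(\mathfrak g^{r})=\mathfrak g^{r}$ for every $r$. Lemma~\ref{lem:LCS-unified} gives $\mathfrak g^{k-1}=\operatorname{span}\{e_k,\dots,e_n\}$, so for $k\ge 3$ the column $\nabla(e_k)$ has no components along $e_1,\dots,e_{k-1}$.
\item For the first two columns, evaluate the local condition at $x=e_1$ and $x=e_2$. This gives $\nabla(e_j)=\phi_{e_j}(e_j)$, which is the $j$-th column of a matrix of the form in Theorem~\ref{thm:mainm(2k+1)}. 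Every such matrix is lower triangular by Lemma~\ref{lem:lower-triangular-unified}, so $b_{1,2}=0$. The entry $b_{2,1}$ is allowed, since $a_2$ is a free parameter there.
\item By Lemma~\ref{lem:bijective} the matrix $M_\nabla$ is invertible. Since it is triangular, every diagonal entry $b_{i,i}$ is nonzero.
\end{enumerate}

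\emph{Sufficiency.} Fix an invertible lower triangular $M_\nabla$ and an arbitrary $x\neq 0$. We must solve $M_\nabla\bar x=A_x\bar x$ in the unknowns $a_i^x,b_i^x$. Row $i$ reads
\[
\sum_{j\le i}b_{i,j}x_j=a_i^x x_1+b_i^x x_2+\sum_{j=3}^{i}(a_1^x)^{j-2}b^x_{i-j+2}x_j+\delta_{i,n}Q_x ,
\]
with the conventions $b_2^x:=(a_1^x)^2$ and $b_k^x:=0$ for $k>n$. Here $Q_x$ is the extra $e_n$-contribution coming from the term $a_1^{t-3}\sum_l(\pm)(a_lb_{\cdot}-a_{n-l}b_{\cdot})$ in Theorem~\ref{thm:mainm(2k+1)}. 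Row $i$ can involve only $x_1,\dots,x_i$, so the right side is triangular as well. Let $m$ be the smallest index with $x_m\neq 0$; rows $1,\dots,m-1$ then read $0=0$. We split by $m$.
\begin{itemize}
\item $m=1$: take $a_1^x=b_{1,1}$ and fix all $b_i^x$ arbitrarily. Each $a_i^x$ appears linearly with coefficient $x_1\neq0$. Solve rows $i=2,\dots,n-1$ for $a_i^x$ in increasing order. In row $n$, the term $Q_x$ involves only the already-determined $a_l^x$ with $l<n$, except possibly $a_n^x$ itself entering through $a_{n-l}$ with $l\ge 2$, which does not occur. So row $n$ is then solved for $a_n^x$.
\item $m=2$: row $2$ forces $(a_1^x)^2=b_{2,2}$, so take $a_1^x$ to be a square root of $b_{2,2}\neq 0$. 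In rows $i\ge 3$ the unknown $b_i^x$ appears only with coefficient $x_2\neq0$, and all other terms involve $b^x_k$ with $k<i$. Set all $a_i^x=0$ for $i\ge 2$; this kills $Q_x$, since every summand of $Q_x$ contains some $a_l^x$ with $l\ge 2$. Then solve rows $3,\dots,n$ recursively for $b_3^x,\dots,b_n^x$.
\item $m\ge 3$: row $m$ gives $(a_1^x)^{m}x_m=b_{m,m}x_m$, so take $a_1^x=\sqrt[m]{b_{m,m}}$. Again put $a_i^x=0$ for $i\ge2$. Row $m+s$ then contains $b^x_{s+2}$ with coefficient $(a_1^x)^{m-2}x_m\neq0$ plus previously determined quantities. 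Solve for $b_3^x,b_4^x,\dots$ in turn.
\end{itemize}
In every case a solution exists, so $\phi_x$ exists and $\nabla$ is a local automorphism.

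\emph{Main obstacle.} The delicate step is the last row, because of the quadratic $e_n$-term $Q_x$ that distinguishes $\mathfrak m_{0,1}(n)$ from $\mathfrak m_0(n)$. For $m=1$ the check is that $a_n^x$ still enters row $n$ linearly with coefficient $x_1$; for $m\ge 2$ the check is that $Q_x$ is killed by setting the free parameters $a_l^x=0$. I would first verify on the explicit formula that every summand of $Q_x$ indeed carries a factor $a_l^x$ with $l\ge 2$.
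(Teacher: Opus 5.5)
\textbf{There is a genuine gap in the sufficiency direction, and it cannot be repaired, because the statement itself is false.} The gap is in the cases $x_1=0$.

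You take $A_x$ from Theorem~\ref{thm:mainm(2k+1)}, in which $b_3^x,\dots,b_n^x$ are free parameters. That description of $\operatorname{Aut}(\mathfrak m_{0,1}(n))$ ignores the relations $[e_i,e_j]=0$ for $i,j\ge 2$, $i+j\ne n$. Every bracket of two vectors in $\operatorname{span}\{e_2,\dots,e_n\}$ lies in $\mathbb C e_n$. Hence the $e_p$-coefficient ($p<n$) of $\phi(e_t)$ is exactly $a_1^{t-2}b_{p-t+2}$. Using $[e_2,e_{n-2}]=-e_n$, $[e_3,e_{n-3}]=e_n$ and $[e_4,e_{n-4}]=-e_n$, one gets
\[
0=\phi([e_2,e_{n-4}])=[\phi(e_2),\phi(e_{n-4})]=a_1^{n-6}\bigl(b_3^2-2b_2b_4\bigr)e_n .
\]
So every automorphism satisfies $b_3^2=2a_1^2b_4$. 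This is the same relation the paper derives for $\mathfrak g_{7,\alpha}$ and then drops from its matrix.

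Now look at your case $m=2$ with $x=e_2$. The rows force $(a_1^x)^2=b_{2,2}$ and $b_i^x=b_{i,2}$ for $3\le i\le n-1$. Your ``recursive solution'' is therefore not the parameter vector of any automorphism unless $b_{3,2}^2=2b_{2,2}b_{4,2}$. The case $x=e_m$ with $3\le m\le n-3$ fails in the same way. The obstacle you flagged, $Q_x$ in row $n$, is harmless. The real obstruction sits in rows $3,\dots,n-1$, through these quadratic constraints on the $b$'s.

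\textbf{Counterexample.} Take $n=7$ and the invertible lower triangular map with $\nabla(e_2)=e_2+e_3$ and $\nabla(e_i)=e_i$ for $i\ne 2$. It is not a local automorphism. The property $\operatorname{ad}(y)^2=0$ is preserved by automorphisms, and $\operatorname{ad}(e_2)^2=0$. But $\operatorname{ad}(e_2+e_3)^2e_1=-[e_3,e_4]=-e_7\neq0$, so no automorphism sends $e_2$ to $e_2+e_3$.

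The paper's own argument, done ``similarly'' to $\mathfrak m_0(n)$, has exactly the same defect. In $\mathfrak m_0(n)$ all brackets among $e_2,\dots,e_n$ vanish, so no such constraints arise; mirroring that proof cannot close the gap here.

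\textbf{What does survive.} Your necessity direction is correct. Your case $x_1\neq0$ also works, provided you choose the $b_i^x$ as a genuine automorphism allows, e.g.\ $b_i^x=0$ for $i\ge3$ with arbitrary $a_i^x$; that map really is an automorphism. A correct characterization of local automorphisms of $\mathfrak m_{0,1}(n)$ must add quadratic conditions on the columns $j\ge 2$, such as $b_{j+1,j}^2=2b_{j,j}b_{j+2,j}$ for $j+2\le n-1$.
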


\subsection{Local automorphisms of the algebra \texorpdfstring{$m_{0,2}(2k+2)$}{m(2k+2)}}\label{subsec13}

\begin{Th}\label{thm:local-m02n}
A linear map $\nabla$ on the Lie algebra $m_{0,2}(n)$ ($n = 2k+2, k \ge 3$) is a local automorphism if and only if its matrix $M_{\nabla}$ with respect to the basis $\{e_1, e_2, \dots, e_n\}$ has the following lower triangular form:
\begin{equation}\label{eq:local-matrix-m02n-form}
M_{\nabla} = \begin{pmatrix}
b_{1,1} & 0 & 0 & 0 & \cdots & 0 & 0 \\
0 & b_{2,2} & 0 & 0 & \cdots & 0 & 0 \\
b_{3,1} & b_{3,2} & b_{3,3} & 0 & \cdots & 0 & 0 \\
b_{4,1} & b_{4,2} & b_{4,3} & b_{4,4} & \cdots & 0 & 0 \\
\vdots & \vdots & \vdots & \vdots & \ddots & \vdots & 0 \\
b_{n-1,1} & b_{n-1,2} & b_{n-1,3} & b_{n-1,4} & \cdots & b_{n-1,n-1} & 0 \\
b_{n,1} & b_{n,2} & b_{n,3} & b_{n,4} & \cdots & b_{n,n-1} & b_{n,n}
\end{pmatrix},
\end{equation}
where $b_{i,j} \in \mathbb{C}$ with the non-vanishing diagonal constraints $b_{1,1} \neq 0$ and $b_{2,2} = b_{1,1}^2$.
\end{Th}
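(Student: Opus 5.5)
The plan follows the scheme of Theorem~\ref{thm:local-m0} and Theorem~\ref{thm:local-gk}, using the automorphism description of Theorem~\ref{thm:autm(2k+2)}. Every automorphism $\phi$ of $m_{0,2}(n)$ has a matrix $A_\phi$ that is lower triangular. Its first two columns are $(a_1,0,a_3,\dots,a_n)^T$ and $(0,a_1^2,b_3,\dots,b_n)^T$. Its diagonal entries are the powers $a_1^t$, and the entries below the diagonal in columns $t\ge 3$ are polynomials in $a_1,a_i,b_j$ given by formula~\eqref{m02nnablaet}. The argument has two parts.

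\textbf{Necessity.} Let $\nabla$ be a local automorphism with matrix $M_\nabla=(b_{i,j})$.
\begin{enumerate}
\item For each $j$, take $x=e_j$. Then $\nabla(e_j)=\phi_{e_j}(e_j)$ is the $j$-th column of a lower triangular matrix, so $b_{i,j}=0$ for $i<j$.
\item From the second column of $A_\phi$, the $e_1$-coordinate of $\phi(e_2)$ is zero, so $b_{1,2}=0$.
\item The first column of $A_\phi$ has zero second entry, since $a_2=0$ for every automorphism. Hence $b_{2,1}=0$. This is the visible difference from the $m_0(n)$ case.
\item The diagonal entries $b_{j,j}=(a_1^{e_j})^j$ are nonzero.
\end{enumerate}
The constraint $b_{2,2}=b_{1,1}^2$ needs a mixed vector. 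Take $x=e_1+e_2$. Then $b_{1,1}=a_1^x$ and $b_{2,2}=(a_1^x)^2$, so $b_{2,2}=b_{1,1}^2$. Lemma~\ref{lem:bijective} and Lemma~\ref{lem:lcsinv} confirm invertibility and triangularity.

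\textbf{Sufficiency.} Fix a lower triangular $M_\nabla$ with $b_{2,1}=b_{1,2}=0$, $b_{1,1}\ne 0$, $b_{2,2}=b_{1,1}^2$ and nonzero diagonal. For each $x$ we must solve $M_\nabla\bar x=A_x\bar x$ for the unknowns $a_1^x,a_3^x,\dots,a_n^x,b_3^x,\dots,b_n^x$. We split by the first nonzero coordinate of $x$.
\begin{enumerate}
\item \emph{Case $x_1\neq0$.} Set $a_1^x=b_{1,1}$. The second equation is $b_{2,2}x_2=(a_1^x)^2x_2$, which holds by the constraint. Choose the $b_j^x$ arbitrarily, say $0$. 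Row $i\ge3$ is then linear in $a_i^x$ with coefficient $x_1$, plus terms in $a_l^x$ for $l<i$; this holds because the nonlinear entries of~\eqref{m02nnablaet} involve only lower-indexed $a$'s. So we solve row by row: $a_i^x=\frac{1}{x_1}(\dots)$.
\item \emph{Case $x_1=0$, $x_2\ne0$.} Take $a_1^x=b_{1,1}$. Choose the $a_i^x$ freely and solve row $i$ for $b_i^x$, whose coefficient is $x_2$. The contributions of $b_i^x$ through columns $t\ge3$ involve only rows below $i$, so the recursion is triangular.
\item \emph{Case $x_1=\dots=x_{m-1}=0$, $x_m\ne0$, $m\ge3$.} Row $m$ forces $(a_1^x)^m=b_{m,m}$, so take an $m$-th root over $\mathbb C$. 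Row $m+r$ contains $b_{r+2}^x$ with coefficient $(a_1^x)^{m-2}x_m\neq0$, plus terms in lower $b$'s and arbitrary $a$'s. Solve recursively with all $a_i^x$ set to $0$.
\end{enumerate}

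\textbf{Main obstacle.} The hard part is the triangularity claim in the last case. One must check that in row $m+r$ the new unknown $b_{r+2}^x$ appears linearly with a nonzero coefficient. The extra $e_{n-1}$ and $e_n$ terms of~\eqref{m02nnablaet} are bilinear in $a$ and $b$, so setting all $a_i^x=0$ for $i\ge3$ should kill them. That reduces the system to the $m_0(n)$ recursion already handled in Theorem~\ref{thm:local-m0}. This point needs verification.

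A caveat remains. On the diagonal, only $b_{2,2}=b_{1,1}^2$ is asserted, not the rigid relations $b_{j,j}=b_{1,1}^j$ for $j\ge3$. In the last case, with a different $a_1^x$ chosen per vector, one may also need $b_{2,2}$ consistent with that root. If this fails, the statement must be refined, and I would test it first on $x=e_2+e_3$.
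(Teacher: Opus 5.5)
\textbf{The gap: the $b^x_j$ are not free.} Your plan is essentially the paper's own argument. It uses the same three-case filtration on the first non-zero coordinate of $x$, takes $A_x$ from Theorem~\ref{thm:autm(2k+2)}, and treats $b^x_3,\dots,b^x_n$ as free unknowns. That freedom is the genuine gap, and the paper's proof has it too. The parametrization in Theorem~\ref{thm:autm(2k+2)} only propagates $\phi(e_1),\phi(e_2)$ through $[e_1,e_i]=e_{i+1}$. The remaining relations $[\phi(e_i),\phi(e_j)]=\phi([e_i,e_j])$ for $i,j\ge2$ are never imposed, and they put quadratic conditions on $v=\phi(e_2)=\sum_{i\ge2}b_ie_i$, where $b_2=a_1^2$. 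For $k\ge3$ we have $[e_2,e_3]=0$. Also $\phi(e_3)=a_1[e_1,v]+w$ with $w\in\operatorname{span}\{e_{2k+1},e_{2k+2}\}$, and $w$ commutes with $v$. Hence $0=[\phi(e_2),\phi(e_3)]=a_1[v,[e_1,v]]$, and its $e_{2k+1}$-component reads
\[
\sum_{i=2}^{2k-2}(-1)^{i+1}\,b_i\,b_{2k-i}=0 .
\]
Here $b_{2k-2}$ occurs only linearly, with coefficient $-2a_1^2\neq0$. The $e_{2k+2}$-component gives a second condition of this kind, which fixes $b_{2k-1}$. In your Case 2, row $2k-2$ of $M_\nabla\bar x=A_x\bar x$ also fixes $b^x_{2k-2}$, and the two requirements collide. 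Case 3 fails for the same reason. Setting the $a^x_i$ to zero does not help, because the obstruction lies in the $b$'s, not in the $a$-dependent correction terms.

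\textbf{The statement itself is false.} Let $\nabla$ be the identity on every $e_j$ except $\nabla(e_2)=e_2+\epsilon e_{2k-2}$, with $\epsilon\neq0$. Its matrix has exactly the stated shape: $b_{2,1}=0$, $b_{1,1}=b_{2,2}=1$, and all diagonal entries equal $1$. But any automorphism with $\phi(e_2)=e_2+\epsilon e_{2k-2}$ would give
\[
[\phi(e_2),\phi(e_3)]=a_1\,[e_2+\epsilon e_{2k-2},\,e_3+\epsilon e_{2k-1}]=-2a_1\epsilon\, e_{2k+1}\neq0=\phi([e_2,e_3]).
\]
Similarly, $\nabla(e_3)=e_3+\epsilon e_{2k-1}$ breaks Case 3 at $x=e_3$. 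The test you proposed, $x=e_2+e_3$, would not detect this, because row $3$ is absorbed by $b^x_3$; the failure already occurs at $x=e_2$.

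Your diagonal worry is also justified once these constraints are imposed. For a diagonal local automorphism of $m_{0,2}(8)$, the vector $x=e_3+e_4$ forces $b^x_3=0$, and hence $b_{4,4}^3=b_{3,3}^4$.

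\textbf{What survives.} Your necessity half holds: the matrix is lower triangular with $b_{2,1}=0$, $b_{2,2}=b_{1,1}^2$, and $b_{j,j}\neq0$ for all $j$ (the statement omits the last condition). Your Case 1 also holds. For $x_1\neq0$, the map determined by $\phi(e_1)=b_{1,1}e_1+\sum_{i\ge3}a^x_ie_i$ and $\phi(e_2)=b_{1,1}^2e_2$ is a genuine automorphism for every choice of the $a^x_i$, and the rows can be solved for these one at a time. A correct characterization must start from the constrained description of $\operatorname{Aut}(m_{0,2}(n))$. The resulting class of local automorphisms is strictly smaller than the one claimed.
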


\begin{proof}
By Theorem~\eqref{thm:autm(2k+2)}  the automorphism $\phi_x$ has the following matrix form:

\begin{equation}
A_{x} =
\begin{pmatrix}\label{matrix local form m02n}
a_1^x & 0 & 0 & 0 & 0 & \cdots & 0 & 0 \\
0 & (a_1^x)^2 & 0 & 0 & 0 & \cdots & 0 & 0 \\
a_3^x & b_3^x & (a_1^x)^3 & 0 & 0 & \cdots & 0 & 0 \\
a_4^x & b_4^x & a_1^x b_3^x & (a_1^x)^4 & 0 & \cdots & 0 & 0 \\
a_5^x & b_5^x & a_1^x b_4^x & (a_1^x)^2 b_3^x & (a_1^x)^5 & \cdots & 0 & 0 \\
\vdots & \vdots & \vdots & \vdots & \vdots & \ddots & \vdots & \vdots \\
a_{n-1}^x & b_{n-1}^x & c^x(n-1,3) & c^x(n-1,4) & c^x(n-1,5) & \cdots & (a_1^x)^{n-1} & 0 \\
a_n^x & b_n^x & c^x(n,3) & c^x(n,4) & c^x(n,5) & \cdots & c_{n,n-1}^x & (a_1^x)^n
\end{pmatrix}
\end{equation}

in here
$c^x_{n-1,t}=(a_1^x)^{t-2}b^x_{n-t+1}+(a_1^{\,t-3})^x\sum_{l=2}^{(n-1)/2} (-1)^{l+1} (a^x_l b^x_{n-t-l+2} - a^x_{n-l-1} b^x_{l-t+3})$
and

$c^x_{n,t}=(a_1^x)^{t-2}b_{n-t+2}+(a_1^x)^{\,t-3}\sum_{p=2}^{(n-1)/2} (-1)^{p+1}(\frac{n}{2}-p)  (a^x_p b^x_{n-p-t+3} - a^x_{n-p} b^x_{p-t+3})$

The present proof is analogous to that of Theorem~\eqref{thm:local-m01n}, the only additional condition being that \(a_2^x = 0\). Matrix~\eqref{matrix local form m02n}, meaning that the coefficients are defined in essentially the same manner.

\begin{equation}\label{local sysytem m02n}
    \begin{cases}
    b_{1,1}x_1=a_1^x x_1 \\
    b_{2,2}x_2=(a_1^x)^2x_2 \\
     \sum_{j=1}^{3}b_{3,j}x_j = a_{3}^x x_1 + b_3^x x_2 +  (a_{1}^x)^3 x_3 , \\
    \sum_{j=1}^{i}b_{i,j}x_j=a_i^x x_1+b_i^x x_2+\sum_{j=3}^{i-1}\bigl(a_1^x\bigl)^{j-2} b_{i-j+2}^xx_j+(a_1^x)^i x_i, \quad 4 \le i\le n-2 \\
    \sum_{j=1}^{n-1}b_{n-1,j}x_j=a_{n-1}^x x_1+b_{n-1}^x x_2+\sum_{j=3}^{n-2}c_{n-1,j}^xx_j+(a_1^x)^{n-1} x_{n-1} \\
    \sum_{j=1}^{n}b_{n,j}x_j=a_{n}^x x_1+b_{n}^x x_2+\sum_{j=3}^{n-1}c_{n,j}^xx_j+(a_1^x)^{n} x_{n}
    \end{cases}
\end{equation}

\begin{itemize}
    \item  If $x_1 \neq 0$, then $a_1^x = b_{1,1}$.
    $$a_{n-1}^x = \frac{1}{x_1}\Bigl( \sum_{j=1}^{n-1}b_{n-1,j}x_j-b_{n-1}^x x_2-\sum_{j=3}^{n-2}c_{n-1,j}^xx_j-(b_{1,1})^n x_{n-1}\Bigl),$$
$$a_n^x = \frac{1}{x_1}\Bigl( \sum_{j=1}^{n}b_{n,j}x_j-b_n^x x_2-\sum_{j=3}^{n-1}c_{n,j}^xx_j-(b_{1,1})^n x_n\Bigl),$$
$b_k^x $ is arbitrary

so we can write if $a_k^x $ is arbitrary:
 $$b_{n-1}^x = \frac{1}{x_2}\Bigl( \sum_{j=1}^{n-1}b_{n-1,j}x_j-a_{n-1}^x x_1-\sum_{j=3}^{n-2}c_{n-1,j}^xx_j-(b_{1,1})^n x_{n-1}\Bigl),$$
$$b_n^x = \frac{1}{x_2}\Bigl( \sum_{j=1}^{n}b_{n,j}x_j-a_n^x x_1-\sum_{j=3}^{n-1}c_{n,j}^xx_j-(b_{1,1})^n x_n\Bigl),$$

\item  if $x_1=0$ and $x_2 \neq 0$, then $b_{2,2}=(a_1^x)^2 \longrightarrow a_1^x=\sqrt{b_{2,2}}$ .
$$b_{n-1}^x =\frac{1}{x_2}\Bigl( \sum_{j=1}^{n-1}b_{n-1,j}x_j-\sum_{j=3}^{n-2}c_{n-1,j}^xx_j-(\sqrt{b_{2,2}})^{n-1} x_{n-1}\Bigl),$$

$$b_n^x =\frac{1}{x_2}\Bigl( \sum_{j=1}^{n}b_{n,j}x_j-\sum_{j=3}^{n-1}c_{n,j}^xx_j-(\sqrt{b_{2,2}})^n x_n\Bigl),$$
give solution of the system of equation~\eqref{local sysytem m02n}. Here the variables $a_i^x$ is arbitrary value.

\item If $ x_1=x_2=\cdots= x_{m-1} = 0$ and $x_m \neq 0, $ then the following equalities, $m \le i \le n$

$b_{m,m}={(a_1^x)^{m}} \Longrightarrow a_1^x=\sqrt[m]{b_{m,m}}$

$b^x_{n-m+1}= \frac{1}{a_1^{\,m-2} x_m} \bigl(\sum_{j=m}^{n-1}b_{i,j} -a_1^{\,m-3}\sum_{l=2}^{(n-2)/2} (-1)^{l+1} (a^x_l b^x_{n-m-l+2} - a^x_{n-l-1} b^x_{l-m+3}) x_m- \\ \sum_{j=m+1}^{n-2}c_{n-1,j}^xx_j+(\sqrt[m]{b_{m,m}})^{n-1} x_{n-1} \bigl)$

$b^x_{n-m+2}= \frac{1}{a_1^{\,m-2} x_m} \bigl(\sum_{j=m}^{n}b_{i,j} -(a_1^x)^{\,m-3}\sum_{p=2}^{(n-1)/2} (-1)^{p+1}(\frac{n}{2}-p)  (a^x_p b^x_{n-p-m+3} -  a^x_{n-p} b^x_{p-m+3})x_m- \\ \sum_{j=m+1}^{n-1}c_{n,j}^xx_j+(\sqrt[m]{b_{m,m}})^n x_n \bigl)$

give solution of the system of equation~\eqref{local sysytem m02n}. Here the variables $a_i^x$ is arbitrary value.
\end{itemize}

\end{proof}

We can similarly prove the following theorem.

\subsection{Local automorphisms of the algebra \texorpdfstring{$m_{0,3}(2k+3)$}{m(2k+3)}}\label{subsec14}
In this case, the verification for local automorphisms is very similar to that for  $m_{0,2}(2k+2)$. In both cases, the conditions  $a_2^x = 0 $ and  $b_2^x = (a_1^x)^2$ are common, and the entries of the first n-3 rows coincide.

\begin{Th}\label{thm:local-m03n}
A linear map $\nabla$ on the Lie algebra $m_{0,3}(n)$ ($n = 2k+3, k \ge 3$) is a local automorphism if and only if its matrix $M_{\nabla}$ with respect to the basis $\{e_1, e_2, \dots, e_n\}$ has the following lower triangular form:
\begin{equation}\label{eq:local-matrix-m03n-form}
M_{\nabla} = \begin{pmatrix}
b_{1,1} & 0 & 0 & 0 & \cdots & 0 & 0 \\
0 & b_{2,2} & 0 & 0 & \cdots & 0 & 0 \\
b_{3,1} & b_{3,2} & b_{3,3} & 0 & \cdots & 0 & 0 \\
b_{4,1} & b_{4,2} & b_{4,3} & b_{4,4} & \cdots & 0 & 0 \\
\vdots & \vdots & \vdots & \vdots & \ddots & \vdots & 0 \\
b_{n-1,1} & b_{n-1,2} & b_{n-1,3} & b_{n-1,4} & \cdots & b_{n-1,n-1} & 0 \\
b_{n,1} & b_{n,2} & b_{n,3} & b_{n,4} & \cdots & b_{n,n-1} & b_{n,n}
\end{pmatrix},
\end{equation}
where $b_{i,j} \in \mathbb{C}$ with the non-vanishing diagonal constraints $b_{1,1} \neq 0$ and $b_{2,2} = b_{1,1}^2$.
\end{Th}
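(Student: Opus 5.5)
The plan is to prove the two directions separately, using the case-by-case filtration of the system $M_\nabla\bar x=A_x\bar x$ already used for $\mathfrak{m}_{0,2}(n)$ in Theorem~\eqref{thm:local-m02n}. Here $A_x$ is an automorphism matrix of the form given in Theorem~\eqref{m03nmain}: it is lower triangular, has $(1,1)$ entry $a_1^x$, zero $(2,1)$ entry, $(2,2)$ entry $(a_1^x)^2$, and diagonal entries $(a_1^x)^m$. Its off-diagonal entries in columns $t\ge 3$ are $(a_1^x)^{t-2}b^x_{\cdot}$ plus the bilinear corrections $a^x_l b^x_{\cdot}$, which appear only in rows $n-2,n-1,n$.

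\textbf{Necessity.} Let $\nabla$ be a local automorphism.
\begin{itemize}
\item By Lemma~\eqref{lem:lcsinv} and Lemma~\eqref{lem:LCS-unified}, $\nabla$ maps $\mathfrak{g}^{k-1}=\operatorname{span}\{e_k,\dots,e_n\}$ into itself for $k\ge 3$. Hence columns $3,\dots,n$ of $M_\nabla$ are lower triangular.
\item Evaluating at $x=e_1$ gives $\nabla(e_1)=\phi_{e_1}(e_1)$, which has no $e_2$-component. So $b_{2,1}=0$ and $b_{1,1}=a_1^{e_1}$.
\item Evaluating at $x=e_2$ gives $b_{1,2}=0$ and $b_{2,2}=(a_1^{e_2})^2$.
\item Invertibility comes from Lemma~\eqref{lem:bijective}. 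Since the matrix is triangular, its diagonal entries are nonzero; in particular $b_{1,1}\neq 0$.
\item To relate $b_{2,2}$ to $b_{1,1}$, take $x=e_1+e_2$. The first coordinate of $M_\nabla\bar x=A_x\bar x$ forces $a_1^x=b_{1,1}$. The second coordinate forces $b_{2,2}=(a_1^x)^2=b_{1,1}^2$.
\end{itemize}
This gives exactly the stated form.

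\textbf{Sufficiency.} Let $M_\nabla$ be lower triangular with $b_{2,1}=0$, $b_{1,1}\neq 0$, $b_{2,2}=b_{1,1}^2$, and all $b_{m,m}\neq 0$. For each $x$ we must produce parameters $a_1^x\neq 0$, $a_i^x$ and $b_i^x$ ($i\ge 3$) solving the system. We split according to the first nonzero coordinate of $x$.
\begin{itemize}
\item \emph{Case $x_1\neq 0$.} Put $a_1^x=b_{1,1}$; then row $2$ holds because $b_{2,2}=b_{1,1}^2$. Choose $b_j^x=0$ for all $j\ge 3$. This kills every bilinear correction term. Each row $i\ge 3$ is then linear in $a_i^x$ with coefficient $x_1$, and involves only already known quantities, so we solve for $a_i^x$ row by row.
\item \emph{Case $x_1=0$, $x_2\neq 0$.} Put $a_1^x=b_{1,1}$, which again satisfies row $2$. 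Choose $a_i^x=0$ for $i\ge 3$; these variables are free because they multiply $x_1=0$. This again kills the bilinear corrections. Rows $i\ge 3$ are now triangular linear equations for $b_i^x$ with coefficient $x_2$, solved recursively in $i$.
\item \emph{Case $x_1=\dots=x_{m-1}=0$, $x_m\neq 0$, $m\ge 3$.} Rows $1,\dots,m-1$ are trivially satisfied. Row $m$ requires $(a_1^x)^m=b_{m,m}$; since $\mathbb{C}$ is algebraically closed and $b_{m,m}\neq 0$, we take $a_1^x$ to be any nonzero $m$-th root. No compatibility with $b_{1,1}$ or $b_{2,2}$ is needed, because those rows are vacuous here. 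This is why only the constraints on $b_{1,1}$ and $b_{2,2}$ appear in the statement. Set $a_i^x=0$ for $i\ge 3$. For $i>m$, row $i$ contains $b^x_{i-m+2}$ with coefficient $(a_1^x)^{m-2}x_m\neq 0$, and otherwise only $b^x_j$ with $j<i-m+2$. We therefore solve for $b^x_3,b^x_4,\dots$ in increasing order.
\end{itemize}

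\textbf{Main obstacle.} The delicate point is the last three rows $n-2,n-1,n$, where the entries of $A_x$ contain the products $a^x_l b^x_{\cdot}$ from Theorem~\eqref{m03nmain}. These products could make the system nonlinear or destroy its triangular order. The device for handling this is to kill them by zeroing whichever family of parameters multiplies a vanishing coordinate: the $b$'s in the first case, the $a$'s in the others. I expect to check carefully from the formula in Theorem~\eqref{m03nmain} that, after this choice, the surviving unknown in each row still appears with the nonzero pivot claimed above. In particular, in rows $n-1$ and $n$ of the last case the pivot coefficient should be exactly $(a_1^x)^{m-2}x_m$ with no residual correction term.
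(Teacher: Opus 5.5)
Your necessity half is correct, and the test vector $x=e_1+e_2$ giving $b_{2,2}=b_{1,1}^2$ is cleaner than the paper's argument. The sufficiency half has a genuine gap, and it is not the one you flagged. Like the paper, you take Theorem~\ref{m03nmain} at face value and treat $a_1^x\neq0$, $a_3^x,\dots,a_n^x$, $b_3^x,\dots,b_n^x$ as independent parameters of an automorphism. In $\mathfrak{m}_{0,3}(2k+3)$ they are not independent.

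A small example shows this. Take $k=3$ ($n=9$), where $[e_2,e_5]=-e_7$, $[e_3,e_6]=-2e_9$, $[e_4,e_5]=3e_9$ and $[e_2,e_7]=0$. Let $\phi(e_1)=e_1$ and $\phi(e_2)=e_2+b_3e_3+b_4e_4$, so that $\phi(e_i)=e_i+b_3e_{i+1}+b_4e_{i+2}$. The $e_9$-coefficient of $[\phi(e_2),\phi(e_5)]$ is $3b_4-2b_3^2$, whereas that of $-\phi(e_7)$ is $-b_4$. Hence $b_4=b_3^2/2$.

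For general $k$, run the same computation on $[e_2,e_{2k-1}]=-e_{2k+1}$, using $[e_4,e_{2k-1}]=(2k-3)e_{2k+3}$, $[e_3,e_{2k}]=-(k-1)e_{2k+3}$ and $[e_2,e_{2k+1}]=0$. It gives $(2k-2)b_4=(k-1)b_3^2$. By degree counting, the only other parameter that could enter this $e_{2k+3}$-coefficient is $a_3$ (recall $a_2=0$), and its contributions cancel. After torus rescaling, every automorphism therefore satisfies $2a_1^2b_4=b_3^2$. This is the same relation the paper derives, and then drops, for $\mathfrak{g}_{7,\alpha}$.

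Consequently your second case breaks at row $4$. When $x_1=0$ and $x_2\neq0$, row $2$ pins $a_1^x=\pm b_{1,1}$ and row $3$ pins $b_3^x$. Row $4$ reads $b_4^xx_2+a_1^xb_3^xx_3+(a_1^x)^4x_4=b_{4,2}x_2+b_{4,3}x_3+b_{4,4}x_4$, and it has no unknown left. Indeed, $b_4^x$ is forced by the constraint, and no $a_i^x$ occurs: such terms either multiply $x_1=0$ or sit only in rows $n-2,n-1,n$. Already $x=e_2$ forces $2b_{2,2}b_{4,2}=b_{3,2}^2$. Your third case fails the same way at row $m+2$; for example, $x=e_3$ forces $2b_{3,3}b_{5,3}=b_{4,3}^2$.

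So the ``if'' direction of the statement is actually false. The map with $\nabla(e_2)=e_2+e_3$ and $\nabla(e_i)=e_i$ for $i\neq2$ has a matrix of the stated form. Yet $\nabla(e_2)=\phi(e_2)$ would require $a_1^2=1$, $b_3=1$, $b_4=0$, which violates the constraint. The paper's proof, which simply defers to the $\mathfrak{m}_{0,2}(2k+2)$ case, has the identical defect. The analogy fails precisely because $\mathfrak{m}_{0,2}$ has no $e_{2k+3}$ to carry this degree-two constraint.

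Your first case ($x_1\neq0$) does survive. Setting $b_j^x=0$ for $j\ge3$ is compatible with the constraint, and the $a_i^x$ ($i\ge3$) really are free. That choice does not kill the bilinear terms containing $b_2^x=(a_1^x)^2$, contrary to your claim. However, in row $i$ those terms involve only $a_s^x$ with $s\le i-2$, so the triangular solve still goes through.
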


\section{2-local automorphism}\label{sec5}

The present section is devoted to the description of $2$-local automorphisms of totally graded filiform Lie algebras.

Let $\mathfrak{A}$ be an algebra. A (not necessarily linear) map $\Delta : \mathfrak{A} \to \mathfrak{A}$ is called a $2$-local automorphism if, for any elements $x, y \in \mathfrak{A}$, there exists an automorphism $\varphi_{x,y} : \mathfrak{A} \to \mathfrak{A}$ such that
\[
\Delta(x) = \varphi_{x,y}(x), \qquad \Delta(y) = \varphi_{x,y}(y).
\]

\begin{Th}
The Lie algebra $\mathfrak{A}$ in Tables~\eqref{tab:filiformn},~\eqref{tab:filiform}  has 2-local automorphisms which are not automorphisms.
\end{Th}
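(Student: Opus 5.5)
The plan is to construct an explicit non-linear map $\Delta$ that perturbs only the last coordinate through a homogeneous function of degree one. The parameters $a_n$ and $b_n$ sit in the boundary row of every automorphism matrix in Section~\ref{sec3}. They are free in all the automorphism theorems there, including Theorem~\ref{theorem gk aut}, Theorem~\ref{Theoremm0n}, Theorem~\ref{theorem m_2(n)} and Theorem~\ref{thm:mainm(2k+1)}. The construction rests on a uniform observation that I would prove directly, independently of those matrix formulas. For $\alpha,\beta\in\mathbb{C}$, let $\varphi_{\alpha,\beta}$ be the linear map with $\varphi_{\alpha,\beta}(e_1)=e_1+\alpha e_n$, $\varphi_{\alpha,\beta}(e_2)=e_2+\beta e_n$ and $\varphi_{\alpha,\beta}(e_j)=e_j$ for $j\ge 3$. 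Then $\varphi_{\alpha,\beta}$ is an automorphism of every algebra in Tables~\eqref{tab:filiformn} and~\eqref{tab:filiform}. The reason is that $e_n$ is central by Lemma~\ref{lem:LCS-unified}, and $e_n\in\mathfrak{g}^2=\operatorname{span}\{e_3,\dots,e_n\}$, on which $\varphi_{\alpha,\beta}$ is the identity. Hence $[\varphi(u),\varphi(v)]=[u,v]=\varphi([u,v])$ on basis pairs, and $\varphi$ is invertible because it is unipotent lower triangular. In coordinates, $\varphi_{\alpha,\beta}(x)=x+(\alpha x_1+\beta x_2)e_n$.

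Next I would fix a function $f:\mathbb{C}^2\to\mathbb{C}$ that is homogeneous of degree one, meaning $f(tz)=tf(z)$ for all $t\in\mathbb{C}$, but not additive. I would take $f(z_1,z_2)=z_1^3/(z_1^2+z_2^2)$ when $z_1^2+z_2^2\neq0$ and $f=0$ otherwise. The zero set of $z_1^2+z_2^2$ is a union of lines through the origin, so homogeneity is preserved, and $f(0)=0$. Then I would define
\[
\Delta(x)=x+f(x_1,x_2)\,e_n .
\]
This map is not additive. For example, $f(1,0)+f(0,1)=1$ while $f(1,1)=\tfrac12$, so $\Delta(e_1)+\Delta(e_2)\neq\Delta(e_1+e_2)$. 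Hence $\Delta$ is not an automorphism.

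The key step, which I expect to be the only real obstacle, is the 2-local condition. Given $x,y$, we need $(\alpha,\beta)$ with $\alpha x_1+\beta x_2=f(x_1,x_2)$ and $\alpha y_1+\beta y_2=f(y_1,y_2)$.
\begin{itemize}
\item If the vectors $(x_1,x_2)$ and $(y_1,y_2)$ are linearly independent, this $2\times2$ system has a unique solution.
\item If they are dependent and not both zero, say $(y_1,y_2)=t(x_1,x_2)$ with $(x_1,x_2)\neq0$, I would solve the first equation. The second then holds automatically, because homogeneity gives $f(y_1,y_2)=tf(x_1,x_2)$. The symmetric case $(x_1,x_2)=0$ is handled the same way with the roles swapped.
\item If both vectors are zero, then $\alpha=\beta=0$ works, since $f(0)=0$.
\end{itemize}
In every case $\varphi_{x,y}=\varphi_{\alpha,\beta}$ is an automorphism, by the first paragraph, with $\Delta(x)=\varphi_{x,y}(x)$ and $\Delta(y)=\varphi_{x,y}(y)$. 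This proves the theorem.

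The care needed is exactly in the degenerate, linearly dependent case. This is why homogeneity over all of $\mathbb{C}$, including the isotropic lines, must be ensured by the convention on $f$. It is also why only the two coordinates $x_1,x_2$ should enter $f$, so that a two-parameter family of central automorphisms suffices to match both $x$ and $y$.
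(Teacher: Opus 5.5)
Your proposal is correct and is essentially the paper's proof. It uses the same $f(z_1,z_2)=z_1^3/(z_1^2+z_2^2)$, the same $\Delta(x)=x+f(x_1,x_2)e_n$, and the same two-parameter family of automorphisms (the identity plus free entries $a,b$ in the first two columns of the last row), fixed by solving the same $2\times 2$ system. The only differences are that you check directly that these maps are automorphisms, via centrality of $e_n$ and $[\mathfrak g,\mathfrak g]\subseteq\operatorname{span}\{e_3,\dots,e_n\}$, where the paper cites its automorphism matrix forms, and that you spell out the degenerate cases and the failure of additivity explicitly.
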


\begin{proof}
Let us define a homogeneous non-additive function $f$ on $\mathbb{C}^2$ as follows:
\begin{equation*}
f(x_1, x_2) = \begin{cases}
\dfrac{x_1^3}{x_1^2 + x_2^2}, & \text{if } x_1^2 + x_2^2 \neq 0, \\
0, & \text{if } x_1^2 + x_2^2 = 0,
\end{cases}
\end{equation*}
where $(x_1, x_2) \in \mathbb{C}^2$. Consider the map $\Delta : \mathfrak{A} \to \mathfrak{A}$ defined in the following way:
\begin{equation*}
\Delta(x) = x + f(x_1, x_2)e_n, \quad \text{where } x = \sum_{i=1}^n x_i e_i \in \mathfrak{A},
\end{equation*}
and $\{e_1, e_2, \dots, e_n\}$ is the basis of $\mathfrak{A}$. Since $f$ is not additive, we have that $\Delta$ is not an automorphism of $\mathfrak{A}$.

Let us show that $\Delta$ is a 2-local automorphism. Let $v, w$ be arbitrary elements of $\mathfrak{A}$. We take a linear map $\Phi : \mathfrak{A} \to \mathfrak{A}$ with a matrix of the following form with respect to the basis $\{e_1, e_2, \dots, e_n\}$:
\begin{equation*}
\Phi = \begin{pmatrix}
1 & 0 & 0 & 0 & \cdots & 0 & 0 \\
0 & 1 & 0 & 0 & \cdots & 0 & 0 \\
0 & 0 & 1 & 0 & \cdots & 0 & 0 \\
0 & 0 & 0 & 1 & \cdots & 0 & 0 \\
\vdots & \vdots & \vdots & \vdots & \ddots & \vdots & \vdots \\
0 & 0 & 0 & 0 & \cdots & 1 & 0 \\
a & b & 0 & 0 & \cdots & 0 & 1
\end{pmatrix}.
\end{equation*}
By the definition of the automorphism matrix $A_{\mathfrak{A}}$ , $\Phi$ is a  automorphism of $\mathfrak{A}$. We prove that there exist parameters $a$ and $b$ such that:
\begin{equation*}
\Delta(v) = \Phi(v), \quad \Delta(w) = \Phi(w).
\end{equation*}
Indeed, the last two vector equations give us the following system of algebraic equations with respect to the variables $a$ and $b$:
\begin{equation}\label{eq:system-A}
\begin{cases}
a v_1 + b v_2 = f(v_1, v_2) \\
a w_1 + b w_2 = f(w_1, w_2)
\end{cases}
\end{equation}
Clearly, if $v_1 w_2 - v_2 w_1 = 0$, then the vectors $(v_1, v_2)$ and $(w_1, w_2)$ are proportional. Since the right-hand side of the system \eqref{eq:system-A} is structurally homogeneous of degree 1 (i.e., $f(\lambda z_1, \lambda z_2) = \lambda f(z_1, z_2)$), the equations become scalar multiples of each other, and the system \eqref{eq:system-A} has infinitely many solutions.

Else, if $v_1 w_2 - v_2 w_1 \neq 0$, the determinant of the system matrix is non-zero, and the system \eqref{eq:system-A} has a unique solution for $a$ and $b$.

Thus, for any $v, w$ in $\mathfrak{A}$, there exists a  linear automorphism $\Phi$ of $\mathfrak{A}$ such that $\Delta(v) = \Phi(v)$ and $\Delta(w) = \Phi(w)$, i.e., $\Delta$ is a  2-local automorphism on $\mathfrak{A}$.
\end{proof}

\end{document}